\theoremstyle{plain}
\newtheorem{thm}{Theorem}
\numberwithin{thm}{subsection}
\newtheorem{lem}[thm]{Lemma}
\newtheorem{prop}[thm]{Proposition}
\newtheorem{cor}[thm]{Corollary}
\newtheorem{rmk}[thm]{Remark}
\newtheorem*{ack}{Acknowledgement}
\theoremstyle{definition}
\newtheorem{defn}[thm]{Definition}
\newtheorem{ex}[thm]{Example}
\newtheorem{exs}[thm]{Examples}
\tikzstyle{V}=[draw, fill =black, circle, inner sep=0pt, minimum size=3pt]
\begin{document}

\title{A Centraliser Analogue to the Farahat-Higman Algebra}

\author{Samuel Creedon}

\date{}

\maketitle

\begin{abstract}
We define a family of algebras $\mathsf{FH}_{m}$ which generalise the Farahat-Higman algebra introduced in \cite{FH59} by replacing the role of the center of the group algebra of the symmetric groups with centraliser algebras of symmetric groups. These algebras have a basis indexed by marked cycle shapes, combinatorial objects which generalise proper integer partitions. We analyse properties of marked cycle shapes and of the algebras $\mathsf{FH}_{m}$, demonstrating that some of the former govern the latter. The main theorem of the paper proves that the algebra $\mathsf{FH}_{m}$ is isomorphic to the tensor product of the degenerate affine Hecke algebra with the algebra of symmetric functions.
\end{abstract}


\section{Introduction}

For $n\in\mathbb{Z}_{\geq 0}$, let $\mathfrak{S}_{n}$ denote the symmetric group of permutations of $[n]:=\{1,\dots,n\}$. In \cite{FH59}, Farahat and Higman constructed an algebra, denoted here by $\mathsf{FH}_{0}$, allowing one to analyse the centers $Z(\mathbb{Z}\mathfrak{S}_{n})$ for all $n$ simultaneously. The structure of $\mathsf{FH}_{0}$ was systematically studied and provided new results for the centers $Z(\mathbb{Z}\mathfrak{S}_{n})$ which led to an alternative proof of Nakayama's Conjecture regarding the $p$-blocks of $\mathfrak{S}_{n}$. We summarise here the core features of their work in a manner easily comparable to the results of this paper.

Permutations belong to the same conjugacy class if and only if they share the same cycle type. Dropping trivial cycles from cycle types induces a natural bijection between the set $\mathcal{P}^{\text{pr}}_{n}$ of proper (contain no part equal to 1) integer partitions of size no greater than $n$ and the set of  conjugacy classes of $\mathfrak{S}_{n}$. Let $\mathsf{CL}_{n}$ denote this bijection. As an example $\mathcal{P}^{\text{pr}}_{4}=\left\{ \emptyset, (2), (2,2), (3), (4) \right\}$ and $\mathsf{CL}_{4}((2,2))$ is the conjugacy class of $\mathfrak{S}_{4}$ consisting of the permutations obtained by the product of two disjoint transpositions. Working with proper partitions instead of partitions themselves allows for the indexing of conjugacy classes in a uniform manner as $n$ ranges over the nonnegative intergers. For $\lambda\in\mathcal{P}^{\text{pr}}_{n}$, let $K_{n}(\lambda)$ denote the formal sum of permutations in $\mathsf{CL}_{n}(\lambda)$ viewed as an element of the $\mathbb{Z}$-algebra $\mathbb{Z}\mathfrak{S}_{n}$. Such class-sums form a $\mathbb{Z}$-basis for the center $Z(\mathbb{Z}\mathfrak{S}_{n})$, and so for any $\lambda,\mu\in\mathcal{P}_{n}^{\text{pr}}$
\begin{equation} \label{CenterStructureConstants}
K_{n}(\lambda)K_{n}(\mu)=\sum_{\nu\in\mathcal{P}_{n}^{\text{pr}}}a_{\lambda,\mu}^{\nu}(n)K_{n}(\nu), 
\end{equation}
with $a_{\lambda,\mu}^{\nu}(n)\in\mathbb{Z}$ the corresponding structure constants. These constants were proved to be polynomial in $n$ by \cite[Theorem 2.2]{FH59}. In other words, let $t$ be a formal variable and $R:=\mathsf{Int}[t]$ the subalgebra of $\mathbb{Q}[t]$ consisting of all integer-valued polynomials, that is all polynomials $p(t)\in\mathbb{Q}[t]$ such that $p(\mathbb{Z})\subseteq\mathbb{Z}$. Then it was shown that there exists, for each triple of proper partitions $\lambda,\mu$, and $\nu$, a polynomial $f_{\lambda,\mu}^{\nu}(t)\in R$ such that $f_{\lambda,\mu}^{\nu}(n)=a_{\lambda,\mu}^{\nu}(n)$ for all $n\in\mathbb{Z}_{\geq 0}$. Then Farahat and Higman defined an $R$-algebra via the formal $R$-basis $\{K(\lambda) \ | \ \lambda\in\mathcal{P}^{\text{pr}}:=\bigcup_{n\geq 0}\mathcal{P}_{n}^{\text{pr}}\}$ and product given by
\begin{equation} \label{CenterStructurePolynomials}
K(\lambda)K(\mu)=\sum_{\nu\in\mathcal{P}^{\text{pr}}}f_{\lambda,\mu}^{\nu}(t)K(\nu).
\end{equation}
This is the algebra $\mathsf{FH}_{0}$, and it is obtained from the centers $Z(\mathbb{Z}\mathfrak{S}_{n})$ by replacing the structure constants $a_{\lambda,\mu}^{\nu}(n)$ with the corresponding polynomials $f_{\lambda,\mu}^{\nu}(t)$, and by replacing the class-sums $K_{n}(\lambda)$ with the formal symbols $K(\lambda)$, which can be thought of as infinite analogues. Comparing \emph{\Cref{CenterStructureConstants}} with \emph{\Cref{CenterStructurePolynomials}} it is immediate that we have surjective ring homomorphisms $\mathsf{pr}_{n}:\mathsf{FH}_{0}\rightarrow Z(\mathbb{Z}\mathfrak{S}_{n})$ for each $n\in\mathbb{Z}_{\geq 0}$ where $t\mapsto n$ and $K(\lambda)\mapsto K_{n}(\lambda)$ (where $K_{n}(\lambda)=0$ if $|\lambda|>n$). Moreover, given $X,Y\in\mathsf{FH}_{0}$, one can show that $\mathsf{pr}_{n}(X)=\mathsf{pr}_{n}(Y)$ for all $n\in\mathbb{Z}_{\geq 0}$ if and only if $X=Y$. Hence structural information of the algebra $\mathsf{FH}_{0}$ equates to structural information for the centers $Z(\mathbb{Z}\mathfrak{S}_{n})$ for all $n\in\mathbb{Z}_{\geq 0}$ uniformly. Numerous results for $\mathsf{FH}_{0}$ were given in \cite{FH59}, many of which were strongly connected to the combinatorics of proper partitions. The main result was \cite[Theorem 2.5]{FH59} which described a family of generators $E_{1},E_{2},\dots$ for $\mathsf{FH}_{0}$, the non-zero images of such under $\mathsf{pr}_{n}$ providing generators for the centers $Z(\mathbb{Z}\mathfrak{S}_{n})$ for each $n\in\mathbb{Z}_{\geq 0}$. Then these new central generators were used to describe central characters in positive characteristic, and from previous results of Farahat this led to an alternative proof of Nakayama's Conjecture.

Many of the results of \cite{FH59} have been recast in a more modern light. An illuminating retelling is given in \cite[Section 3]{Ryba21} which focuses on the role played by the Jucys-Murphy elements $L_{1},\dots, L_{n}$ of $\mathbb{Z}\mathfrak{S}_{n}$, which were at the center of the approach to the representation theory of $\mathfrak{S}_{n}$ taken in \cite{OV96}. Notably it was shown in \cite{Jucys74} that $\mathsf{pr}_{n}(E_{k})=e_{k}(L_{1},\dots, L_{n})$ where $e_{k}$ is the $k$-th elementary symmetric polynomial in $n$ commuting variables. Thus proving that the center $Z(\mathbb{Z}\mathfrak{S}_{n})$ is the subalgebra of symmetric polynomials in the corresponding Jucys-Murphy elements. Ryba used the Jucys-Murphy elements and their relation to $\mathsf{FH}_{0}$ to give an alternative approach to describing the central characters in postive characteristic to what was done in \cite{FH59}, which again led to a proof of Nakayama's Conjecture. From this new perspective it was proved in \cite[Theorem 3.8]{Ryba21} that we have an isomorphism $\mathsf{FH}_{0}\cong R\otimes \mathsf{Sym}$ of $R$-algebras, where $\mathsf{Sym}$ is the $\mathbb{Z}$-algebra of symmetric functions. This isomorphism maps the generators $E_{k}$ of $\mathsf{FH}_{0}$ to the $k$-th elementary symmetric functions, allowing one to interpret the elements $E_{k}$ as the ``evaluation'' of elementary symmetric functions at the Jucys-Murphy elements $L_{1}, L_{2},\dots$.

Beyong providing results for the centers $Z(\mathbb{Z}\mathfrak{S}_{n})$ and for the modular representation theory of $\mathfrak{S}_{n}$, the Farahat-Higman algebras $\mathsf{FH}_{0}$ have found other applications. For example, in \cite[p.131-132]{Mac95} a certain associated graded algebra $\mathsf{G}$ of $\mathsf{FH}_{0}$ is considered, and is regarded as a $\mathbb{Z}$-algebra as only constant polynomials are retained in the filtration. This $\mathbb{Z}$-algebra is isomorphic to the algebra of symmetric functions, and has been studied in relation to enumeration problems for permutation factorisation, see for example \cite{GJ94}. Analogous algebras for $\mathsf{FH}_{0}$ and $\mathsf{G}$ have also been defined by replacing the group $\mathfrak{S}_{n}$ with other finite groups. For example analogues in the setting of the spin symmetric group algebras have been given in \cite{TW09}, of wreath products of the symmetric groups with any finite group in \cite{W03} and \cite{Ryba21}, and of the general linear group over finite fields in \cite{WW19}. On a more subtle note, the work of Farahat and Higman demonstrated the idea of translating uniform polynomial behaviour of the structure of objects into a single universal object to study. This concept is at the heart of forming Deligne's category in \cite{Del04}, where it is using the fact the constants appearing from the compositions of morphisms between tensor spaces are polynomial. A detailed commentary from this perspective can be found in \cite[Section 1.1]{HS22}.

In this paper we define a family of $R$-algebras $\mathsf{FH}_{m}$ indexed by nonnegative integers $m$. These generalise the Farahat-Higman algebra of \cite{FH59} which is recovered by setting $m=0$, and is a subalgebra for each $\mathsf{FH}_{m}$. Instead of replacing the group $\mathfrak{S}_{n}$ with another finite group, we construct the algebras $\mathsf{FH}_{m}$ by instead replacing the role of the center $Z(\mathbb{Z}\mathfrak{S}_{n})$ with certain centraliser algebras. For any $m\leq n$ let $\mathsf{Stab}_{n}(m)$ denote the subgroup of $\mathfrak{S}_{n}$ consisting of permutations which fix the elements $1,2,\dots,m$. Then we work with the centraliser algebra $Z_{n,m}$ consisting of all elements of $\mathbb{Z}\mathfrak{S}_{n}$ which commute with $\mathsf{Stab}_{n}(m)$. When $m=0$ then $Z_{n,0}$ is simply the center $Z(\mathbb{Z}\mathfrak{S}_{n})$. The algebra $Z_{n,m}$ has a basis consisting of orbit-sums which are indexed by a certain subset $\Lambda_{\leq n}(m)$ of the set of $m$-marked cycle shapes (see \emph{\Cref{Defn:mMCS}} for a formal definition). These generalise proper integer partitions, where $\Lambda_{\leq n}(0)$ is naturally in bijection with $\mathcal{P}_{n}^{\text{pr}}$. The first main result of this paper is \emph{\Cref{Thm:PolyStructureConstants}} which proves that the structure constants associated to the orbit-sum basis of $Z_{n,m}$ are polynomial in $n$, with such polynomials belonging to $R$. In an analogous manner to \cite{FH59} we then define $\mathsf{FH}_{m}$ by a formal $R$-basis and product obtained by replacing structure constants with their corresponding polynomials (see \emph{\Cref{Defn:FHAlgebra}} and \emph{\Cref{Prop:FHisRAlg}}). We analyse the set of $m$-marked cycle shapes and also equip such a set with a product, turning it into a monoid, and a degree function. We show that this degree function translates into a filtration on $\mathsf{FH}_{m}$, and we show in \emph{\Cref{Prop:LeadingTerm}} that the monoid product, and certain combinatorial statistics associated to $m$-marked cycle shapes, govern the leading term of the product of basis elements in $\mathsf{FH}_{m}$. We then utilise this result to prove \emph{\Cref{Thm:FHmIsomorphicm}}, the main theorem of the paper, which establishes an isomorphism $\mathsf{FH}_{m}\cong R\otimes_{\mathbb{Z}}(\mathcal{H}_{m}\otimes \mathsf{Sym})$ of $R$-algebras, where $\mathcal{H}_{m}$ is the degenerate affine Hecke algebra. This generalises the isomorphism $\mathsf{FH}_{0}\cong R\otimes \mathsf{Sym}$ given by \cite[Theorem 3.8]{Ryba21}, and highlights that the additional combinatorial information that $m$-marked cycle shapes have over proper integer partitions translates algebraically to the extension of $\mathsf{Sym}$ by tensoring with $\mathcal{H}_{m}$. We conclude by summarising various results for the centraliser algebras $Z_{n,m}$ obtained from analogous results for $\mathsf{FH}_{m}$ in a uniform manner.

 The motivation behind constructing such algebras was in studying the affine partition algebra $\mathcal{A}_{2k}^{\text{aff}}(z)$ defined in \cite{CD22}. Notably the algebras $\mathsf{FH}_{m}$ emerged from a desire to understand the image of the action of $\mathcal{A}_{2k}^{\text{aff}}(z)$ on the tensor space $M\otimes V^{\otimes k}$ with $M$ any $\mathbb{C}\mathfrak{S}_{n}$-module and $V$ the permutation module induced from the action set $[n]$ (see \cite[Theorem 3.3.2]{CD22}). This connection will appear in a future paper. Also, effort was made for all the results of $\mathsf{FH}_{m}$ established in this paper to work in the integral setting, that is over the ring $R$. This keeps the algebra $\mathsf{FH}_{m}$ open as a potential tool to analyse the modular representation theroy of the centraliser algebras $Z_{n,m}$, which is an active area of research (see for example \cite{DEM13}).

The structure of this paper is as follows: \emph{Section 2} proves some technical results describing the cardinalities of certain orbits. \emph{Section 3} uses such results to prove the polynomial property of the structure constants in the centraliser algebras $Z_{n,m}$. We also set up some notation including that of an $m$-marked cycle shape and its degree. In \emph{Section 4} we define the $R$-algebras $\mathsf{FH}_{m}$ and prove that $\mathsf{R}\mathfrak{S}_{m}$ and $\mathsf{FH}_{0}$ are subalgebras. In \emph{Section 5} we describe a monoid structure on the set of $m$-marked cycles shapes and give a concrete criteria for a permutation to belong to the orbit associated to a given $m$-marked cycle shape. \emph{Section 6} uses this product to describe the leading terms of products of basis elements in $\mathsf{FH}_{m}$. In \emph{Section 7} the algebra of symmetric functions is recalled and \emph{Section 8} recalls the Jucys-Murphy elements of $\mathbb{Z}\mathfrak{S}_{n}$ and their relationship to the Farahat-Higman algebra $\mathsf{FH}_{0}$. We end with \emph{Section 9} which proves the isomorphism $\mathsf{FH}_{m}\cong R\otimes_{\mathbb{Z}}(\mathcal{H}_{m}\otimes\mathsf{Sym})$ and summarises various results for $\mathsf{FH}_{m}$ and the centraliser algebras $Z_{n,m}$. 

\begin{ack}
This work was done during the author's PhD at City, University of London which was funded by EPSRC.
\end{ack}


\section{Polynomial Cardinality of $m$-Classes}

For a finite set $A\subset\mathbb{N}$ we write $\mathfrak{S}(A)$ to denote the group of permutations of $A$. For any $n\geq 0$ we write $[n]:=\{1,\dots,n\}$ and $\mathfrak{S}_{n}:=\mathfrak{S}([n])$, with the convention that $[0]=\emptyset$ and $\mathfrak{S}_{0}$ the trivial group. We will set $\mathfrak{S}_{\mathbb{N}}:=\cup_{n\geq 1}\mathfrak{S}_{n}$, the group of permutations of $\mathbb{N}$ with finite support. We denote the support of any $\pi\in\mathfrak{S}_{\mathbb{N}}$ by $\mathsf{Sup}(\pi)=\{i\in\mathbb{N} \ | \ \pi(i)\neq i\}$ and set $||\pi||:=|\mathsf{Sup}(\pi)|$. For any $r\geq 1$ consider the $r$-fold direct product $\mathfrak{S}_{\mathbb{N}}^{\times r}$. For $\bm{\pi}=(\pi_{i})_{i=1}^{r}\in\mathfrak{S}_{\mathbb{N}}^{\times r}$ let $\mathsf{Sup}(\bm{\pi}):=\mathsf{Sup}(\pi_{1})\cup\cdots\cup\mathsf{Sup}(\pi_{r})$ and set $||\bm{\pi}||:=|\mathsf{Sup}(\bm{\pi})|$. For any $m\geq 0$ let $\mathsf{Stab}(m)$ denote the subgroup of $\mathfrak{S}_{\mathbb{N}}$ consisting of the permutations which fix each element of $[m]$, in particular $\mathsf{Stab}(0)=\mathfrak{S}_{\mathbb{N}}$. The group $\mathsf{Stab}(m)$ acts on the $r$-fold direct product $\mathfrak{S}_{\mathbb{N}}^{\times r}$ by component-wise conjugation. We call the respected orbits the $m$-\emph{classes} of $\mathfrak{S}_{\mathbb{N}}^{\times r}$. We denote the $m$-class of $\mathfrak{S}_{\mathbb{N}}^{\times r}$ containing $\bm{\pi}=(\pi_{i})_{i=1}^{r}\in\mathfrak{S}_{\mathbb{N}}^{\times r}$ by
\[ \mathsf{CL}_{\mathbb{N},m}(\bm{\pi}):=\{ (\sigma_{i})_{i=1}^{r}\in\mathfrak{S}_{\mathbb{N}}^{\times r} \ | \ \sigma_{i}=\tau\pi_{i}\tau^{-1} \text{ for all } i\in[r] \text{ and some } \tau\in\mathsf{Stab}(m) \}. \]

\begin{ex} \label{Ex:mClassSN}
Let $m=r=2$ and consider $\bm{\pi}=((1,3)(2,4), (3,4,5))\in\mathfrak{S}_{\mathbb{N}}^{\times 2}$. Then the $m$-class containing $\bm{\pi}$ is given by $\mathsf{CL}_{\mathbb{N},2}(\bm{\pi})=\{ ((1,a)(2,b), (a,b,c)) \ | \ (a,b,c)\in(\mathbb{N}\backslash [2])^{!3} \}$ where $(\mathbb{N}\backslash [2])^{!3}$ is the subset of the $3$-fold direct product of $\mathbb{N}\backslash [2]$ consisting of all tuples with pairwise distinct entries.
\end{ex}

Given any $m\geq 0$ and $\bm{\pi}=(\pi_{i})_{i=1}^{r}\in\mathfrak{S}_{\mathbb{N}}^{\times r}$ define
\[ \mathsf{Sup}_{m}(\bm{\pi}):=\mathsf{Sup}(\bm{\pi})\cap [m], \ \text{and } \  \mathsf{Sup}^{m}(\bm{\pi}):=\mathsf{Sup}(\bm{\pi})\backslash [m], \]
and write $||\bm{\pi}||_{m}$ and $||\bm{\pi}||^{m}$ for the cardinalities of $\mathsf{Sup}_{m}(\bm{\pi})$ and $\mathsf{Sup}^{m}(\bm{\pi})$ respectively. Let $\mathsf{C}$ be an $m$-class of $\mathfrak{S}_{\mathbb{N}}^{\times r}$ and let $\bm{\pi}=(\pi_{i})_{i=1}^{r},\bm{\sigma}=(\sigma_{i})_{i=1}^{r}\in\mathsf{C}$. Hence there exists some $\tau\in\mathsf{Stab}(m)$ such that $\sigma_{i}=\tau\pi_{i}\tau^{-1}$ for each $i\in[r]$. Conjugating $\pi_{i}$ by $\tau$ permutes the entries within the cycles of $\pi_{i}$ according to $\tau$, thus for each $i\in[r]$ the permutations $\pi_{i}$ and $\sigma_{i}$ must have the same cycle structure, and the relative positions of the elements of $[m]$ among their cycles must agree. Hence $||\bm{\pi}||=||\bm{\sigma}||$, $||\bm{\pi}||_{m}=||\bm{\sigma}||_{m}$, and $||\bm{\pi}||^{m}=||\bm{\sigma}||^{m}$, and so it makes sense to define $||\mathsf{C}||:=||\bm{\pi}||$, $||\mathsf{C}||_{m}:=||\bm{\pi}||_{m}$, and $||\mathsf{C}||^{m}:=||\bm{\pi}||^{m}$ for any $\bm{\pi}\in\mathsf{C}$.

Let $n\geq m$, then we set $\mathsf{Stab}_{n}(m):=\mathsf{Stab}(m)\cap\mathfrak{S}_{n}$. Similar to the above situation, this group acts on the $r$-fold direct product $\mathfrak{S}_{n}^{\times r}$ by component-wise conjugation. We call the respected orbits the $m$-\emph{classes} of $\mathfrak{S}_{n}^{\times r}$. We denote the $m$-class of $\mathfrak{S}_{n}^{\times r}$ containing $\bm{\pi}=(\pi_{i})_{i=1}^{r}\in\mathfrak{S}_{n}^{\times r}$ by
\[ \mathsf{CL}_{n,m}(\bm{\pi}):=\{ (\sigma_{i})_{i=1}^{r}\in\mathfrak{S}_{n}^{\times r} \ | \ \sigma_{i}=\tau\pi_{i}\tau^{-1} \text{ for all } i\in[r] \text{ and some } \tau\in\mathsf{Stab}_{n}(m) \}. \]
Again we can set $||\mathsf{C}||:=||\bm{\pi}||$, $||\mathsf{C}||_{m}:=||\bm{\pi}||_{m}$, and $||\mathsf{C}||^{m}:=||\bm{\pi}||^{m}$ for any $m$-class $\mathsf{C}$ of $\mathfrak{S}_{n}^{\times r}$ and any $\bm{\pi}\in\mathsf{C}$. Given any $m$-class $\mathsf{C}$ of $\mathfrak{S}_{\mathbb{N}}^{\times r}$ we write $\mathsf{C}_{n}:=\mathsf{C}\cap\mathfrak{S}_{n}^{\times r}$.

\begin{ex} \label{Ex:mClassSn}
Continuing from \emph{\Cref{Ex:mClassSN}} let $\mathsf{C}=\mathsf{CL}_{\mathbb{N},2}(\bm{\pi})$. For any $n\geq 2$ one can deduce that
\[ \mathsf{C}_{n} = 
\begin{cases}
\{ ((1,a)(2,b), (a,b,c)) \ | \ (a,b,c)\in([n]\backslash [2])^{!3} \}, & n\geq 5\\
\emptyset, & 2\leq n \leq 4 \\
\end{cases}
\]
When $n\geq 5$ the set $\mathsf{C}_{n}$ is an $m$-class of $\mathfrak{S}_{n}^{\times 2}$.
\end{ex}

As the above example suggests, when $\mathsf{C}$ is an $m$-class of $\mathfrak{S}_{\mathbb{N}}^{\times r}$ and $n\geq m$, then the set $\mathsf{C}_{n}$ is either empty or an $m$-class of $\mathfrak{S}_{n}^{\times r}$. The following proposition proves this and gives a criteria for when $\mathsf{C}_{n}$ is empty.

\begin{prop} \label{Prop:mClassInequality}
Let $\mathsf{C}$ be an $m$-class of $\mathfrak{S}_{\mathbb{N}}^{\times r}$ and $n\geq m$. Then $\mathsf{C}_{n}$ is non-empty if and only if
\begin{equation} \label{mClassInequality}
n\geq ||\mathsf{C}||^{m}+m.
\end{equation}
In this case $\mathsf{C}_{n}$ is an $m$-class of $\mathfrak{S}_{n}^{\times r}$ and the $m$-classes of $\mathfrak{S}_{n}^{\times r}$ appear uniquely in this manner.
\end{prop}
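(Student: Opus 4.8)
The plan is to track what an element of an $m$-class looks like on and off the set $[m]$, thereby converting the abstract orbit condition into a concrete counting condition on the number of support points lying outside $[m]$. Fix $\bm{\pi}\in\mathsf{C}$. Since conjugation by any $\tau\in\mathsf{Stab}(m)$ fixes $[m]$ pointwise, the support points lying in $[m]$, namely $\mathsf{Sup}_{m}(\bm{\pi})$, are common to every element of $\mathsf{C}$, while $\tau$ permutes the remaining $||\mathsf{C}||^{m}$ support points freely among $\mathbb{N}\setminus[m]$. Hence an element $\bm{\sigma}\in\mathsf{C}$ lies in $\mathfrak{S}_{n}^{\times r}$ precisely when its $||\mathsf{C}||^{m}$ support points outside $[m]$ can be placed inside $\{m+1,\dots,n\}$; as this set has $n-m$ elements, such a placement exists if and only if $||\mathsf{C}||^{m}\leq n-m$, which is exactly \Cref{mClassInequality}. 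For the forward direction I would explicitly choose $\tau\in\mathsf{Stab}(m)$ carrying $\mathsf{Sup}^{m}(\bm{\pi})$ into $\{m+1,\dots,n\}$, producing a witness in $\mathsf{C}_{n}$; for the converse I would note that any $\bm{\sigma}\in\mathsf{C}_{n}$ already has its $||\mathsf{C}||^{m}$ distinct support points outside $[m]$ sitting in $\{m+1,\dots,n\}$, forcing the inequality.

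Next, assuming $\mathsf{C}_{n}\neq\emptyset$, I would show it is a single $\mathsf{Stab}_{n}(m)$-orbit. That it is a union of such orbits is immediate from $\mathsf{Stab}_{n}(m)\subseteq\mathsf{Stab}(m)$ together with the fact that conjugation by an element of $\mathfrak{S}_{n}$ preserves $\mathfrak{S}_{n}^{\times r}$. The substance is transitivity: given $\bm{\sigma},\bm{\sigma}'\in\mathsf{C}_{n}$, there is some $\tau\in\mathsf{Stab}(m)$ with $\bm{\sigma}'=\tau\cdot\bm{\sigma}$ (component-wise conjugation), but this $\tau$ need not lie in $\mathfrak{S}_{n}$, and I expect this replacement to be the main obstacle. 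The key observation is that each conjugate $\tau\pi_{i}\tau^{-1}$ depends only on the restriction of $\tau$ to $\mathsf{Sup}(\pi_{i})$ (it relabels cycle entries by $\tau$), so it suffices to modify $\tau$ away from $\mathsf{Sup}(\bm{\sigma})$. Concretely, $\tau$ restricts to a bijection $\mathsf{Sup}^{m}(\bm{\sigma})\to\mathsf{Sup}^{m}(\bm{\sigma}')$ between subsets of $\{m+1,\dots,n\}$ of equal size $||\mathsf{C}||^{m}$; since their complements in $\{m+1,\dots,n\}$ also share the common size $n-m-||\mathsf{C}||^{m}$, I can extend this restriction to a permutation $\tau'$ of $[n]$ that fixes $[m]$ pointwise. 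Then $\tau'\in\mathsf{Stab}_{n}(m)$ agrees with $\tau$ on all of $\mathsf{Sup}(\bm{\sigma})$ (both fix $[m]$ pointwise, hence agree on $\mathsf{Sup}_{m}(\bm{\sigma})$, and $\tau'$ was built to extend $\tau|_{\mathsf{Sup}^{m}(\bm{\sigma})}$), whence $\tau'\cdot\bm{\sigma}=\tau\cdot\bm{\sigma}=\bm{\sigma}'$, as required.

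Finally, for the uniqueness statement I would argue that $\mathsf{C}\mapsto\mathsf{C}_{n}$ is a bijection from the $m$-classes $\mathsf{C}$ of $\mathfrak{S}_{\mathbb{N}}^{\times r}$ satisfying \Cref{mClassInequality} onto the $m$-classes of $\mathfrak{S}_{n}^{\times r}$. Injectivity is formal: distinct $m$-classes of $\mathfrak{S}_{\mathbb{N}}^{\times r}$ are disjoint orbits, so they cannot share the common element of $\mathsf{C}_{n}=\mathsf{C}'_{n}$. For surjectivity, given an $m$-class $\mathsf{D}$ of $\mathfrak{S}_{n}^{\times r}$ I would pick $\bm{\sigma}\in \mathsf{D}$, form $\mathsf{C}:=\mathsf{CL}_{\mathbb{N},m}(\bm{\sigma})$, and observe $\bm{\sigma}\in\mathsf{C}_{n}$; by the previous paragraph $\mathsf{C}_{n}$ is the $m$-class of $\mathfrak{S}_{n}^{\times r}$ containing $\bm{\sigma}$, which is $\mathsf{D}$.
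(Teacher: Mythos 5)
Your proposal is correct and follows essentially the same route as the paper: reduce membership in $\mathfrak{S}_{n}^{\times r}$ to fitting the $||\mathsf{C}||^{m}$ support points outside $[m]$ into $[n]\backslash[m]$, then prove transitivity by replacing an arbitrary $\tau\in\mathsf{Stab}(m)$ with a permutation of $[n]\backslash[m]$ agreeing with it on $\mathsf{Sup}^{m}(\bm{\sigma})$, and conclude uniqueness from disjointness of orbits. The only cosmetic difference is that you compare two arbitrary elements of $\mathsf{C}_{n}$ whereas the paper anchors the argument to one chosen representative; the content is identical.
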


\begin{proof}
Fix $\bm{\pi}=(\pi_{i})_{i=1}^{r}\in\mathsf{C}$. All elements in $\mathsf{C}$ are of the form $(\tau\pi_{i}\tau^{-1})_{i=1}^{r}$ for some $\tau\in\mathsf{Stab}(m)$. The set $\mathsf{Sup}^{m}(\tau\bm{\pi}\tau^{-1})$ is all elements of $\mathbb{N}\backslash [m]$ for which at least one $\tau\pi_{i}\tau^{-1}$ acts non-trivially. Thus $\tau\bm{\pi}\tau^{-1}\in\mathsf{C}_{n}$ if and only if $\mathsf{Sup}^{m}(\tau\bm{\pi}\tau^{-1})\subset[n]\backslash [m]$. For this to be the case we must have
\[ |\mathsf{Sup}^{m}(\tau\bm{\pi}\tau^{-1})|=||\mathsf{C}||^{m}\leq |[n]\backslash [m]|=n-m. \]
Rearranging gives \emph{\Cref{mClassInequality}}. So $\mathsf{C}_{n}=\emptyset$ whenever $n<||\mathsf{C}||^{m}+m$. Now assume \emph{\Cref{mClassInequality}} holds. We have $\mathsf{Sup}^{m}(\tau\bm{\pi}\tau^{-1})=\tau\left(\mathsf{Sup}^{m}(\bm{\pi})\right)$, the image of $\mathsf{Sup}^{m}(\bm{\pi})$ under $\tau$. Let $t:\mathsf{Sup}^{m}(\bm{\pi})\rightarrow [n]\backslash [m]$ be an injective map, which exists since \emph{\Cref{mClassInequality}} holds. Then fix a permutation $\tau_{t}\in\mathsf{Stab}(m)$ such that $\tau(i)=t(i)$ for all $i\in\mathsf{Sup}^{m}(\bm{\pi})$. Hence $\mathsf{Sup}^{m}(\tau_{t}\bm{\pi}\tau_{t}^{-1})\subset[n]\backslash [m]$ and so $\mathsf{C}_{n}$ is non-empty as it contains $\bm{\sigma}:=\tau_{t}\bm{\pi}\tau_{t}^{-1}$.

We now want to prove that $\mathsf{C}_{n}$ is an $m$-class of $\mathfrak{S}_{n}^{\times r}$. Since $\bm{\sigma}:=\tau_{t}\bm{\pi}\tau_{t}^{-1}\in\mathsf{C}_{n}$ and $\mathsf{C}_{n}=\mathsf{C}\cap\mathfrak{S}_{n}^{\times r}$, any element of $\mathsf{C}_{n}$ is of the form $\tau\bm{\sigma}\tau^{-1}$ for $\tau\in\mathsf{Stab}(m)$ such that $\mathsf{Sup}^{m}(\tau\bm{\sigma}\tau^{-1})\subset[n]\backslash [m]$. If $\tau\in\mathsf{Stab}_{n}(m)$ then clearly $\mathsf{Sup}^{m}(\tau\bm{\sigma}\tau^{-1})\subset[n]\backslash [m]$, thus we have $\mathsf{CL}_{n,m}(\bm{\sigma})\subset \mathsf{C}_{n}$. Hence the equality $\mathsf{CL}_{n,m}(\bm{\sigma})=\mathsf{C}_{n}$ occurs if one can show that whenever $\tau\bm{\sigma}\tau^{-1}\in\mathsf{C}_{n}$ for some $\tau\in\mathsf{Stab}(m)$ there exists a $\tau'\in\mathsf{Stab}_{n}(m)$ such that $\tau\bm{\sigma}\tau^{-1}=\tau'\bm{\sigma}(\tau')^{-1}$. Given such a $\tau$ we must have that $\mathsf{Sup}^{m}(\tau\bm{\sigma}\tau^{-1})=\tau\left(\mathsf{Sup}^{m}(\bm{\sigma})\right)\subset[n]\backslash [m]$. Since we also have that $\mathsf{Sup}^{m}(\bm{\sigma})\subset[n]\backslash [m]$, let $\tau'$ be any permutation of $[n]\backslash [m]$ with the property that $\tau'(i):=\tau(i)$ for all $i\in\mathsf{Sup}^{m}(\bm{\sigma})$, then it is clear that $\tau\bm{\sigma}\tau^{-1}=\tau'\bm{\sigma}(\tau')^{-1}$ and so $\mathsf{CL}_{n,m}(\bm{\sigma})=\mathsf{C}_{n}$. Hence for any $\bm{\sigma}\in\mathfrak{S}_{n}^{\times r}$ we have $(\mathsf{CL}_{\mathbb{N},m}(\bm{\sigma}))_{n}=\mathsf{CL}_{n,m}(\bm{\sigma})$. Since orbits intersect trivially all such $m$-classes of $\mathfrak{S}_{n}^{\times r}$ appear as the intersection of a unique $m$-class of $\mathfrak{S}_{\mathbb{N}}^{\times r}$ with $\mathfrak{S}_{n}^{\times r}$.

\end{proof}

We now end this section by presenting a result describing the cardinality of the set $\mathsf{C}_{n}$ when $\mathsf{C}$ is an $m$-class of $\mathfrak{S}_{\mathbb{N}}^{\times r}$ and $n\geq m$. Notably we show that the cardinality of such a set is polynomial in $n$.

\begin{prop} \label{Prop:mClassCardinality}
Let $\mathsf{C}$ be an $m$-class of $\mathfrak{S}_{\mathbb{N}}^{\times r}$. For $n\geq m$,
\[ |\mathsf{C}_{n}|= \frac{1}{b(\mathsf{C})}(n-m)(n-m-1)\cdots(n-m-||\mathsf{C}||^{m}+1) \]
where $b(\mathsf{C})\in\mathbb{N}$ is a constant depending only on the class $\mathsf{C}$ and not on $n$. 
\end{prop}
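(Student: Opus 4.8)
The plan is to compute $|\mathsf{C}_n|$ via the orbit-stabiliser theorem applied to the action of $\mathsf{Stab}_n(m)$ on $\mathfrak{S}_n^{\times r}$, after separating the contribution that genuinely varies with $n$ from a constant factor intrinsic to $\mathsf{C}$.

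First I would dispose of the degenerate case. If $n < ||\mathsf{C}||^m + m$, then \Cref{Prop:mClassInequality} gives $\mathsf{C}_n = \emptyset$, so $|\mathsf{C}_n| = 0$; since the right-hand side is a product of $||\mathsf{C}||^m$ consecutive integers descending from $n-m$, and here $0 \le n-m < ||\mathsf{C}||^m$, one of these factors is exactly $0$, so the formula holds trivially. Hence I may assume $n \ge ||\mathsf{C}||^m + m$, so that by \Cref{Prop:mClassInequality} the set $\mathsf{C}_n$ is a single $m$-class, i.e.\ a single orbit of $G := \mathsf{Stab}_n(m)$ acting by component-wise conjugation.

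Fixing $\bm{\pi} = (\pi_i)_{i=1}^r \in \mathsf{C}_n$, I note that $G$ is precisely the group of permutations of $[n]\setminus[m]$ and so has order $(n-m)!$. By orbit-stabiliser, $|\mathsf{C}_n| = |G| / |Z|$, where $Z := \{\tau \in G : \tau \pi_i \tau^{-1} = \pi_i \text{ for all } i\}$ is the \emph{simultaneous centraliser} of $\bm{\pi}$ in $G$. The crux is to describe $Z$. Set $S := \mathsf{Sup}^m(\bm{\pi})$, of size $k := ||\mathsf{C}||^m$, and $T := ([n]\setminus[m]) \setminus S$, of size $n-m-k$. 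Since conjugation preserves supports, any $\tau \in Z$ satisfies $\tau(\mathsf{Sup}(\pi_i)) = \mathsf{Sup}(\pi_i)$; as $\tau$ fixes $[m]$ pointwise it therefore preserves $S$, and hence $T$. Conversely, each element of $T$ is fixed by every $\pi_i$, so an arbitrary permutation of $T$ commutes with $\bm{\pi}$. Because each $\pi_i$ is supported inside $[m]\cup S$, the centralising condition on $\tau$ only constrains its restriction to $S$. This exhibits a direct-product decomposition $Z \cong Z_S \times \mathfrak{S}(T)$, where $Z_S$ is the group of permutations of $S$ which, extended by the identity on $[m]\cup T$, centralise $\bm{\pi}$.

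The key observation is that $Z_S$ depends only on the relative configuration of $[m]$ and $S$ within the cycles of $\bm{\pi}$ — that is, only on the $m$-class $\mathsf{C}$ — and not on $n$, since $n$ enters solely through the ambient free factor $\mathfrak{S}(T)$. Setting $b(\mathsf{C}) := |Z_S| \in \mathbb{N}$, I obtain $|Z| = b(\mathsf{C})\,(n-m-k)!$, whence
\[ |\mathsf{C}_n| = \frac{(n-m)!}{b(\mathsf{C})\,(n-m-k)!} = \frac{1}{b(\mathsf{C})}(n-m)(n-m-1)\cdots(n-m-k+1), \]
which is exactly the claimed formula with $k = ||\mathsf{C}||^m$. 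The main obstacle is the centraliser decomposition $Z \cong Z_S \times \mathfrak{S}(T)$: one must verify carefully that every centralising $\tau$ preserves $S$ (so it cannot mix support with non-support elements) and that the residual factor $Z_S$ is genuinely independent of $n$. Once these two points are secured, the rest is the orbit-stabiliser count and the trivial vanishing check, both routine.
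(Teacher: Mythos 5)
Your proposal is correct and follows essentially the same route as the paper: orbit--stabiliser for the $\mathsf{Stab}_n(m)$-action, followed by the decomposition of the simultaneous centraliser into a factor $\mathfrak{S}(T)$ of order $(n-m-||\mathsf{C}||^m)!$ and an $n$-independent factor $Z_S$ (the paper's $\mathsf{Stab}_{\mathfrak{S}(\mathsf{Sup}^{m}(\bm{\pi}))}(\bm{\pi})$), with the same vanishing check in the range $m\leq n<||\mathsf{C}||^{m}+m$. The only difference is cosmetic: you treat the degenerate case first rather than last.
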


\begin{proof}
Assume $n\geq ||\mathsf{C}||^{m}+m$, so $\mathsf{C}_{n}$ is an $m$-class of $\mathfrak{S}_{n}^{\times r}$ by \emph{\Cref{Prop:mClassInequality}}.  Pick $\bm{\pi}=(\pi_{i})_{i=1}^{r}\in\mathsf{C}_{n}$, then $\mathsf{C}_{n}=\mathsf{CL}_{n,m}(\bm{\pi})$ is the orbit of $\bm{\pi}$ in $\mathfrak{S}_{n}^{\times r}$ under the action of $\mathsf{Stab}_{n}(m)$ by component-wise conjugation. Thus by the \emph{Orbit-Stabilizer Theorem} we have
\[ |\mathsf{C}_{n}|=\frac{|\mathsf{Stab}_{n}(m)|}{|\mathsf{Stab}_{\mathsf{Stab}_{n}(m)}(\bm{\pi})|} \]
where $\mathsf{Stab}_{\mathsf{Stab}_{n}(m)}(\bm{\pi})=\{\tau\in\mathsf{Stab}_{n}(m) \ | \ \tau\bm{\pi}\tau^{-1}=\bm{\pi}\}$ is the subgroup of $\mathsf{Stab}_{n}(m)$ whose elements fix $\bm{\pi}$ under component-wise conjugation. Consider $\mathsf{Stab}_{n}([m]\cup\mathsf{Sup}^{m}(\bm{\pi}))$, the subgroup of $\mathfrak{S}_{n}$ consisting of all permutations which act trivially on $[m]\cup\mathsf{Sup}^{m}(\bm{\pi})$. Naturally $\mathsf{Stab}_{n}([m]\cup\mathsf{Sup}^{m}(\bm{\pi}))\subset\mathsf{Stab}_{\mathsf{Stab}_{n}(m)}(\bm{\pi})$. Now let $\mathfrak{S}(\mathsf{Sup}^{m}(\bm{\pi}))$ denote the subgroup of $\mathfrak{S}_{n}$ consisting of the permutations of $\mathsf{Sup}^{m}(\bm{\pi})$. Then we have that $\mathfrak{S}(\mathsf{Sup}^{m}(\bm{\pi}))\subset\mathsf{Stab}_{n}(m)$, and hence $\mathsf{Stab}_{\mathfrak{S}(\mathsf{Sup}^{m}(\bm{\pi}))}(\bm{\pi})=\{ \tau\in\mathfrak{S}(\mathsf{Sup}^{m}(\bm{\pi})) \ | \  \tau\bm{\pi}\tau^{-1}=\bm{\pi}\}$ is also a subgroup of $\mathsf{Stab}_{\mathsf{Stab}_{n}(m)}(\bm{\pi})$.

\emph{Claim}: We have a group isomorphism
\[ \mathsf{Stab}_{\mathsf{Stab}_{n}(m)}(\bm{\pi})\cong\mathsf{Stab}_{n}([m]\cup\mathsf{Sup}^{m}(\bm{\pi}))\times\mathsf{Stab}_{\mathfrak{S}(\mathsf{Sup}^{m}(\bm{\pi}))}(\bm{\pi}). \]
Note that the two subgroups $\mathsf{Stab}_{n}([m]\cup\mathsf{Sup}^{m}(\bm{\pi}))$ and $\mathsf{Stab}_{\mathfrak{S}(\mathsf{Sup}^{m}(\bm{\pi}))}(\bm{\pi})$ commute and have trivial intersection. So to prove this claim we only need to show that any permutation $\tau\in\mathsf{Stab}_{\mathsf{Stab}_{n}(m)}(\bm{\pi})$ can be expressed as $\tau=\tau_{1}\tau_{2}$ for some $\tau_{1}\in\mathsf{Stab}_{n}([m]\cup\mathsf{Sup}^{m}(\bm{\pi}))$ and $\tau_{2}\in\mathsf{Stab}_{\mathfrak{S}(\mathsf{Sup}^{m}(\bm{\pi}))}(\bm{\pi})$. Let $\tau\in\mathsf{Stab}_{\mathsf{Stab}_{n}(m)}(\bm{\pi})$, then by definition $\tau\pi_{i}\tau^{-1}=\pi_{i}$ for each $i\in[r]$. We seek to show that the sets $[n]\backslash\mathsf{Sup}(\bm{\pi})$ and $\mathsf{Sup}(\bm{\pi})$ are invariant under the action of $\tau$. Suppose for contradiction this is not the case, hence there exists an $a\in [n]\backslash\mathsf{Sup}(\bm{\pi})$ such that $\tau(a)=b\in \mathsf{Sup}(\bm{\pi})$. Then for each $i\in[r]$,
\[ \pi_{i}(b)=(\tau\pi_{i}\tau^{-1})(b) = (\tau\pi_{i})(a) = \tau(a)=b. \]
Thus $\pi_{i}$ fixes $b$ for each $i\in[r]$, but this gives the desired contradiction since $b\in\mathsf{Sup}(\bm{\pi})$. Thus the sets $[n]\backslash\mathsf{Sup}(\bm{\pi})$ and $\mathsf{Sup}(\bm{\pi})$ are invariant under the action of $\tau$, which implies a decomposition $\tau=\tau_{1}\tau_{2}$ where $\tau_{1}$ is a permutation of $[n]\backslash\mathsf{Sup}(\bm{\pi})$ and $\tau_{2}$ is a permutation of $\mathsf{Sup}(\bm{\pi})$. Naturally $\tau_{1}$ and $\tau_{2}$ commute, and since $\tau$ fixes $[m]$, both $\tau_{1}$ and $\tau_{2}$ must also fix $[m]$. As such $\tau_{1}$ is an element of $\mathsf{Stab}_{n}([m]\cup\mathsf{Sup}^{m}(\bm{\pi}))$ as desired, and $\tau_{2}\in\mathfrak{S}(\mathsf{Sup}^{m}(\bm{\pi}))$. Lastly note that for each $i\in[r]$,
\[ \pi_{i}=\tau\pi_{i}\tau^{-1}=\tau_{2}\tau_{1}\pi_{i}\tau_{1}^{-1}\tau_{2}^{-1} = \tau_{2}\pi_{i}\tau_{2}^{-1} \]
since $\tau_{1}$ commutes with both $\tau_{2}$ and $\pi_{i}$. Thus $\tau_{2}\pi_{i}\tau_{2}^{-1}=\pi_{i}$ for each $i\in[r]$, implying that $\tau_{2}$ belongs to $\mathsf{Stab}_{\mathfrak{S}(\mathsf{Sup}^{m}(\bm{\pi}))}(\bm{\pi})$. Hence the claim holds.

Returning to the cardinality of $\mathsf{C}_{n}$, recall that $|\mathsf{Sup}^{m}(\bm{\pi})|=||\mathsf{C}||^{m}$. Also observe that the size of the group $\mathfrak{S}(\mathsf{Sup}^{m}(\bm{\pi}))$ depends only on the class $\mathsf{C}$, in particular it is independent of $n$. This implies the same for the cardinality of $\mathsf{Stab}_{\mathfrak{S}(\mathsf{Sup}^{m}(\bm{\pi}))}(\bm{\pi})$, and so we write $b(\mathsf{C}):=|\mathsf{Stab}_{\mathfrak{S}(\mathsf{Sup}^{X}(\bm{\pi}))}(\bm{\pi})|$. Hence we have that
\begin{align*}
|\mathsf{C}_{n}|&=\frac{|\mathsf{Stab}_{n}(m)|}{|\mathsf{Stab}_{n}([m]\cup\mathsf{Sup}^{m}(\bm{\pi}))||\mathsf{Stab}_{\mathfrak{S}(\mathsf{Sup}^{m}(\bm{\pi}))}(\bm{\pi})|} \\
&= \frac{(n-m)!}{(n-m-||\mathsf{C}||^{m})!b(\mathsf{C})} \\
&= \frac{1}{b(\mathsf{C})}(n-m)(n-m-1)\cdots(n-m-||\mathsf{C}||^{m}+1).
\end{align*}
We assumed $n\geq ||\mathsf{C}||^{m}+m$ so that $\mathsf{C}_{n}$ is an $m$-class of $\mathfrak{S}_{n}^{\times r}$. By \emph{\Cref{Prop:mClassInequality}} the set $\mathsf{C}_{n}$ is empty when $m\leq n <||\mathsf{C}||^{m}+m$, and such values of $n$ are precisely those which give zero in the above formula for $|\mathsf{C}_{n}|$. Hence this formula holds for all $n\geq m$, completing the proof.

\end{proof}


\section{Centraliser Algebras and Polynomial Structure Constants}

Let $m\geq 0$, $\mathsf{C}$ be an $m$-class of $\mathfrak{S}_{\mathbb{N}}$, and $\pi\in\mathsf{C}$. As mentioned before $\mathsf{C}$ is completely determined by the cycle structure of $\pi$ and the relative positions of the elements of $[m]$ among the cycles. If one was to take $\pi$ and replace each of the elements in $\mathbb{N}\backslash [m]$ among the non-trivial cycles with a formal symbol, say $*$, then the resulting object would retain the defining characteristics of $\mathsf{C}$, and so could represent it.

\begin{defn} \label{Defn:mMCS}
For a finite set $A$, we call $(a_{i})_{i=1}^{l}\in A^{\times l}$, for some $l\in\mathbb{N}$, a \emph{cycle} if we only care about the order of the entries up to cyclic shifts, and say it has length $l$. For $*$ a formal symbol, we define an $m$-\emph{marked cycle shape} to be a finite collection of cycles with entries in $[m]\cup\{*\}$ with the following properties:
\begin{itemize}
\item[(1)] The multiset of entries among the cycles equals $[m]\cup\{*^{n}\}$ for some $n\in\mathbb{Z}_{\geq 0}$, in particular each element of $[m]$ appears precisely once.
\item[(2)] Cycles containing only $*$ must be of length at least two. 
\end{itemize}
We write an $m$-marked cycle shape as a formal product of its cycles by juxtaposition, where the order of the cycles is immaterial. We let $\Lambda(m)$ denote the set of all such $m$-marked cycle shapes.
\end{defn}

\begin{ex} \label{Ex:mMCS}
An example of a $6$-marked cycle shape is
\[ \lambda = (2,6)(5)(1,*,*,4,*)(3,*,*)(*,*)(*,*,*)\in\Lambda(6). \]
The multiset of entries among the cycles of $\lambda$ is $[6]\cup\{*^{10}\}$. Cyclicly shifting the entries of any of the cycles or rearranging the cycles in any order will result in an alterntaive expression of $\lambda$.
\end{ex}

The $0$-marked cycle shapes only contain the symbol $*$, and we will refer to these as \emph{cycle shapes}.  Clearly one may identify the set $\Lambda(0)$ with the set of proper partitions $\mathcal{P}^{\text{pr}}$. We consider the empty set $\emptyset$ an element of $\Lambda(0)$ consisting of no cycles. The subset of $\Lambda(m)$ consisting of the $m$-marked cycle shapes which contain no symbols $*$ may be identified with $\mathfrak{S}_{m}$, and so we write $\mathfrak{S}_{m}\subset\Lambda(m)$. 

As discussed above the set of $m$-marked cycles shapes $\Lambda(m)$ provides a natural indexing set for the collection of $m$-classes of $\mathfrak{S}_{\mathbb{N}}$: For $\lambda\in\Lambda(m)$ let $\mathsf{CL}_{\mathbb{N},m}(\lambda)$ denote the corresponding $m$-class. The permutations of $\mathsf{CL}_{\mathbb{N},m}(\lambda)$ are those obtained from $\lambda$ by replacing the symbols $*$ with distinct elements from $\mathbb{N}\backslash [m]$. For example, letting $\lambda$ be the $6$-marked cycle shape displayed in \emph{\Cref{Ex:mMCS}}, we have that
\[ \mathsf{CL}_{\mathbb{N},6}(\lambda)=\left\{ (2,6)(1,a_{1},a_{2},4,a_{3})(3,a_{4},a_{5})(a_{6},a_{7})(a_{8},a_{9},a_{10}) \ \Big| \ (a_{i})_{i=1}^{10}\in(\mathbb{N}\backslash [6])^{!10} \right\}. \]
Recall the quantities $||\mathsf{C}||$, $||\mathsf{C}||^{m}$, and $||\mathsf{C}||_{m}$ for any $m$-class $\mathsf{C}$ of $\mathfrak{S}_{\mathbb{N}}$. Then for $\lambda\in\Lambda(m)$ we write
\[ ||\lambda||:=||\mathsf{CL}_{\mathbb{N},m}(\lambda)||, \ \ ||\lambda||^{m}:=||\mathsf{CL}_{\mathbb{N},m}(\lambda)||^{m}, \ \text{ and } \ ||\lambda||_{m}:=||\mathsf{CL}_{\mathbb{N},m}(\lambda)||_{m}. \]
The quantity $||\lambda||$ is the number of entries among the cycles of length at least two, $||\lambda||^{m}$ is the number of $*$ symbols appearing among the cycles, and $||\lambda||_{m}$ is the number of elements from $[m]$ which appear in cycles of length at least two. We define a map $\mathsf{deg}_{m}:\Lambda(m)\rightarrow\mathbb{Z}_{\geq m}$ by setting
\[ \mathsf{deg}_{m}(\lambda):=||\lambda||^{m}+m. \]
We call $\mathsf{deg}_{m}(\lambda)$ the \emph{degree} of $\lambda$, which is defined with the inequality of \emph{\Cref{Prop:mClassInequality}} in mind. Specially, for $n\geq m$, the $m$-classes of $\mathfrak{S}_{n}$ are indexed by $\Lambda_{\leq n}(m):=\{\lambda\in\Lambda(m) \ | \ \mathsf{deg}_{m}(\lambda)\leq n\}$. For $\lambda\in\Lambda_{\leq n}(m)$ we denote the corresponding $m$-class by $\mathsf{CL}_{n,m}(\lambda)=(\mathsf{CL}_{\mathbb{N},m}(\lambda))_{n}$, which consists of all permutations of $\mathfrak{S}_{n}$ one can obtain from $\lambda$ by replacing the symbols $*$ with distinct elements from $[n]\backslash [m]$.

Consider the $\mathbb{Z}$-algebra $\mathbb{Z}\mathfrak{S}_{n}$. For any $n\geq m$ we define the $m$-\emph{centraliser algebra} as
\[ Z_{n,m}:=\{z\in\mathbb{Z}\mathfrak{S}_{n} \ | \ \tau z=z\tau \text{ for all } \tau\in\mathsf{Stab}_{n}(m)\}. \]
When $m=0$ then $\mathsf{Stab}_{n}(0)=\mathfrak{S}_{n}$ and $Z_{n,0}=Z(\mathbb{Z}\mathfrak{S}_{n})$. For any $\lambda\in\Lambda(m)$ we define the $m$-class sum by
\[ K_{n}(\lambda):=\sum_{\pi\in\mathsf{CL}_{n,m}(\lambda)}\pi. \]
Note by \emph{\Cref{Prop:mClassInequality}} we have that $K_{n}(\lambda)\neq 0$ if and only if $\mathsf{deg}_{m}(\lambda)\leq n$. Also one can deduce that the set of $m$-class sums $\{K_{n}(\lambda) \ | \ \lambda\in\Lambda_{\leq n}(m)\}$ provides a $\mathbb{Z}$-basis for the centraliser algebra $Z_{n,m}$. As such the product of any two such elements must decomposed into a linear combination of the same, that is for any $\lambda,\mu\in\Lambda_{\leq n}(m)$ we have that
\[ K_{n}(\lambda)K_{n}(\mu)=\sum_{\nu\in\Lambda_{\leq n}(m)}a_{\lambda,\mu}^{\nu}(n)K_{n}(\nu) \]
with structure constants $a_{\lambda,\mu}^{\nu}(n)\in\mathbb{Z}_{\geq 0}$. We now show these structure constants are polynomial in $n$. Let $t$ be a formal variable and $R:=\mathsf{Int}[t]$ the subring of $\mathbb{Q}[t]$ of all polynomials $p(t)$ such that $p(\mathbb{Z})\subseteq\mathbb{Z}$.

\begin{thm} \label{Thm:PolyStructureConstants}
For each $\lambda,\mu,\nu\in\Lambda(m)$ there exists a unique polynomial $f_{\lambda,\mu}^{\nu}(t)\in R$ such that
\[ K_{n}(\lambda)K_{n}(\mu) = \sum_{\nu\in\Lambda_{\leq n}(m)}f_{\lambda,\mu}^{\nu}(n)K_{n}(\nu) \]
in $Z_{n,m}$ for any $n\geq m$. We refer to the polynomials $f_{\lambda,\mu}^{\nu}(t)$ as the \emph{structure polynomials}.
\end{thm}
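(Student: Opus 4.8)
The plan is to realise the structure constant $a_{\lambda,\mu}^{\nu}(n)$ as an orbit count and then reduce its polynomiality to \Cref{Prop:mClassCardinality}. Expanding directly,
\[ K_{n}(\lambda)K_{n}(\mu)=\sum_{\alpha\in\mathsf{CL}_{n,m}(\lambda)}\sum_{\beta\in\mathsf{CL}_{n,m}(\mu)}\alpha\beta, \]
so comparing with the orbit-sum basis expansion, the coefficient of a fixed $\gamma\in\mathsf{CL}_{n,m}(\nu)$ is the number of factorisations $\gamma=\alpha\beta$ with $\alpha\in\mathsf{CL}_{n,m}(\lambda)$ and $\beta\in\mathsf{CL}_{n,m}(\mu)$; this equals $a_{\lambda,\mu}^{\nu}(n)$ and is independent of the chosen $\gamma$ precisely because $K_{n}(\lambda)K_{n}(\mu)\in Z_{n,m}$ has constant coefficient on each $m$-class. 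Summing this count over all $\gamma$ in the class yields the key identity
\[ |T_{n}|=a_{\lambda,\mu}^{\nu}(n)\,|\mathsf{CL}_{n,m}(\nu)|,\qquad T_{n}:=\{(\alpha,\beta)\in\mathfrak{S}_{n}^{\times 2}\mid \alpha\in\mathsf{CL}_{n,m}(\lambda),\ \beta\in\mathsf{CL}_{n,m}(\mu),\ \alpha\beta\in\mathsf{CL}_{n,m}(\nu)\}. \]

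Next I would show $|T_{n}|$ is a polynomial in $n$ lying in $R$. The three defining conditions on $(\alpha,\beta)$ are invariant under componentwise conjugation by $\mathsf{Stab}_{n}(m)$, so $T_{n}$ is a union of $m$-classes of $\mathfrak{S}_{n}^{\times 2}$. By \Cref{Prop:mClassInequality} every such $m$-class has the form $\mathsf{C}_{n}$ for a unique $m$-class $\mathsf{C}$ of $\mathfrak{S}_{\mathbb{N}}^{\times 2}$, and I collect into a set $\mathcal{S}$ those $\mathsf{C}$ whose representatives satisfy the three conditions over $\mathfrak{S}_{\mathbb{N}}$. Since any $(\pi_{1},\pi_{2})\in\mathsf{C}\in\mathcal{S}$ has $||\mathsf{C}||\leq||\lambda||+||\mu||$, the set $\mathcal{S}$ is finite and, crucially, independent of $n$; a short check gives $T_{n}=\bigsqcup_{\mathsf{C}\in\mathcal{S}}\mathsf{C}_{n}$. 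Hence $|T_{n}|=\sum_{\mathsf{C}\in\mathcal{S}}|\mathsf{C}_{n}|$, and by \Cref{Prop:mClassCardinality} each summand equals $\tfrac{1}{b(\mathsf{C})}(n-m)\cdots(n-m-||\mathsf{C}||^{m}+1)$, an integer-valued polynomial. Thus $|T_{n}|=P(n)$ for some $P(t)\in R$, and likewise $|\mathsf{CL}_{n,m}(\nu)|=Q(n)$ with $Q(t)=\tfrac{1}{b(\mathsf{CL}_{\mathbb{N},m}(\nu))}(t-m)\cdots(t-m-||\nu||^{m}+1)\in R$.

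Finally I would upgrade the numerical identity $P(n)=a_{\lambda,\mu}^{\nu}(n)Q(n)$ to a divisibility of polynomials. The roots of $Q$ are the distinct integers $m,m+1,\dots,\mathsf{deg}_{m}(\nu)-1$, all simple. For each such $r$ we have $m\leq r<\mathsf{deg}_{m}(\nu)$, so $\mathsf{CL}_{r,m}(\nu)=\emptyset$ by \Cref{Prop:mClassInequality}, whence no factorisation exists and $P(r)=|T_{r}|=0$. As $P$ vanishes at every simple root of $Q$, we get $Q\mid P$ in $\mathbb{Q}[t]$, so $f_{\lambda,\mu}^{\nu}(t):=P(t)/Q(t)\in\mathbb{Q}[t]$ satisfies $f_{\lambda,\mu}^{\nu}(n)=a_{\lambda,\mu}^{\nu}(n)\in\mathbb{Z}$ for all $n\geq\mathsf{deg}_{m}(\nu)$. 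A rational polynomial taking integer values on all sufficiently large integers lies in $R$, and it is unique since it is determined on infinitely many points. For $\nu$ with $\mathsf{deg}_{m}(\nu)>n$ the term $K_{n}(\nu)$ vanishes, so the displayed identity of the theorem holds for every $n\geq m$.

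I expect the \emph{last step} to be the genuine obstacle: the ratio $P/Q$ is a priori only a rational function, and one must actively exploit the vanishing $|T_{r}|=0$ below the degree threshold (equivalently, the emptiness of the $\nu$-class forced by \Cref{Prop:mClassInequality}) to conclude that $Q$ divides $P$. The verification that $\mathcal{S}$ is finite and $n$-independent is the other delicate point, but it follows cleanly from the support bound $||\mathsf{C}||\leq||\lambda||+||\mu||$.
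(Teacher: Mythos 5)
Your proposal is correct and follows the same skeleton as the paper's proof: both realise the coefficient of $K_{n}(\nu)$ as $|T_{n}|/|\mathsf{CL}_{n,m}(\nu)|$, decompose the factorisation set $T_{n}$ (the paper's set $A\cap(\mathfrak{S}_{n}\times\mathfrak{S}_{n})$) into finitely many $m$-classes of $\mathfrak{S}_{\mathbb{N}}^{\times 2}$ using the support bound, and invoke \Cref{Prop:mClassCardinality} to express both numerator and denominator as falling-factorial polynomials in $R$. The one place you diverge is the step you correctly flag as the crux, namely why $Q$ divides $P$. The paper handles it by the explicit inequality $||\nu||^{m}\leq||\mathsf{C}^{(i)}||^{m}$, which makes the denominator's falling factorial a literal factor of each numerator term and so yields a closed formula
\[ f_{\lambda,\mu}^{\nu}(t)=b(\nu)\sum_{i\in I}\frac{1}{b(\mathsf{C}^{(i)})}(t-m-||\nu||^{m})\cdots(t-m-||\mathsf{C}^{(i)}||^{m}+1), \]
whereas you argue abstractly that $Q$ has only simple integer roots $m,\dots,\mathsf{deg}_{m}(\nu)-1$ at which $P$ vanishes because $\mathsf{CL}_{r,m}(\nu)=\emptyset$ forces $T_{r}=\emptyset$ by \Cref{Prop:mClassInequality}. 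Both arguments are valid; yours is slicker and avoids comparing supports of $\pi_{1}^{(i)}\pi_{2}^{(i)}$ with those of the factors, while the paper's cancellation buys an explicit product expression for the structure polynomial that is reused implicitly later (e.g.\ in degree bounds). Your closing appeal to the fact that a rational polynomial integer-valued on all sufficiently large integers lies in $R$ is the same criterion the paper uses.
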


\begin{proof}
Fix $\lambda,\mu,\nu\in\Lambda(m)$. Consider the set of pairs
\[ A = \{ (\pi_{1},\pi_{2})\in\mathsf{CL}_{\mathbb{N},m}(\lambda)\times\mathsf{CL}_{\mathbb{N},m}(\mu) \ | \ \pi_{1}\pi_{2}\in\mathsf{CL}_{\mathbb{N},m}(\nu) \} \subset \mathfrak{S}_{\mathbb{N}}\times\mathfrak{S}_{\mathbb{N}}. \]
When $A=\emptyset$ we set $f_{\lambda,\mu}^{\nu}(t):=0$. Assume $A\neq\emptyset$ and let $(\pi_{1},\pi_{2})\in A$. For any $\tau\in\mathsf{Stab}(m)$ we have that $(\tau\pi_{1}\tau^{-1})(\tau\pi_{2}\tau^{-1})=\tau(\pi_{1}\pi_{2})\tau^{-1}$ which belongs to $\mathsf{CL}_{\mathbb{N},m}(\nu)$ since $\pi_{1}\pi_{2}$ does. So $(\tau\pi_{1}\tau^{-1}, \tau\pi_{2}\tau^{-1})$ belongs to $A$ for any $\tau\in\mathsf{Stab}(m)$. Thus for some indexing set $I$, the set $A$ is the union of $m$-class $\mathsf{C}^{(i)}$ of $\mathfrak{S}_{\mathbb{N}}\times\mathfrak{S}_{\mathbb{N}}$ for each $i\in I$. For any $(\pi_{1}^{(i)},\pi_{2}^{(i)})\in\mathsf{C}^{(i)}$,
\[ ||\mathsf{C}^{(i)}||^{m} = |\mathsf{Sup}^{m}(\pi_{1}^{(i)})\cup\mathsf{Sup}^{m}(\pi_{2}^{(i)})| \leq |\mathsf{Sup}^{m}(\pi_{1}^{(i)})|+|\mathsf{Sup}^{m}(\pi_{2}^{(i)})|=||\lambda||^{m}+||\mu||^{m}. \]
Thus by \emph{\Cref{Prop:mClassInequality}} we have for any $n\geq ||\lambda||^{m}+||\mu||^{m}+m$ that $\mathsf{C}_{n}^{(i)}$ is an $m$-class of $\mathfrak{S}_{n}\times\mathfrak{S}_{n}$ for each $i\in I$. This implies that $I$ is finite. Also by \emph{\Cref{Prop:mClassCardinality}} we have for any $n\geq m$ that
\[ |A\cap (\mathfrak{S}_{n}\times\mathfrak{S}_{n})| = \sum_{i\in I}\frac{1}{b(\mathsf{C}^{(i)})}(n-m)(n-m-1)\cdots(n-m-||\mathsf{C}^{(i)}||^{m}+1), \]
where $b(\mathsf{C}^{(i)})$ are constants independent of $n$. By definition of $A$, the multiplicity of $K_{n}(\nu)$ in the product $K_{n}(\lambda)K_{n}(\mu)$ is $|A\cap (\mathfrak{S}_{n}\times\mathfrak{S}_{n})|$ divided by $|\mathsf{CL}_{n,m}(\nu)|$. Again by \emph{\Cref{Prop:mClassCardinality}} we have that
\[ |\mathsf{CL}_{n,m}(\nu)| = \frac{1}{b(\nu)}(n-m)(n-m-1)\cdots(n-m-||\nu||^{m}+1) \]
where $b(\nu)=b(\mathsf{CL}_{\mathbb{N},m}(\nu))$ is a constant independent of $n$. Now for any $(\pi_{1}^{(i)},\pi_{2}^{(i)})\in\mathsf{C}^{(i)}$,
\[ ||\nu||^{m} = |\mathsf{Sup}^{m}(\pi_{1}^{(i)}\pi_{2}^{(i)})|\leq |\mathsf{Sup}^{m}(\pi_{1}^{(i)})\cup\mathsf{Sup}^{m}(\pi_{2}^{(i)})|=||\mathsf{C}^{(i)}||. \]
Thus we have that $|A\cap (\mathfrak{S}_{n}\times\mathfrak{S}_{n})|$ divided by $|\mathsf{CL}_{n,m}(\nu)|$ is given by
\[ b(\nu)\sum_{i\in I}\frac{1}{b(\mathsf{C}^{(i)})}(n-m-||\nu||^{m})(n-m-||\nu||^{m}-1)\cdots(n-m-||\mathsf{C}^{(i)}||^{m}+1). \]
Hence let $f_{\lambda,\mu}^{\nu}(t)$ be the polynomial obtained from the above expression by replacing $n$ with $t$. What remains to be shown is that $f_{\lambda,\mu}^{\nu}(t)$ belongs to $R$. It is known that a polynomial $f(t)\in\mathbb{Q}[t]$ of degree $d$ belongs to $R$ if and only if it is integer valued on $d+1$ consecutive integers. Well we have shown that each $f_{\lambda,\mu}^{\nu}(t)$ is integer valued on the infinite set $\{m,m+1,\dots\}$ since the evaluation at such an integer $n$ gives the coefficient of the term $K_{n}(\nu)$ in the product $K_{n}(\lambda)K_{n}(\mu)$ in the $\mathbb{Z}$-algebra $Z_{n,m}$, hence $f_{\lambda,\mu}^{\nu}(t)\in R$.  

\end{proof}

Specialising the above theorem to the case $m=0$ recovers \cite[Theorem 2.2]{FH59}. It is worth remarking that in \cite{FH59} they instead used \emph{reduced partitions} as the indexing set for their classes as aposed to cycle shapes, but there is a natural correspondence between them.


\section{The Farahat-Higman Algebras $\mathsf{FH}_{m}$}

Knowing that the structure constants of $Z_{n,m}$ are polynomial in $n$ allows us to define a new $R$-algebra $\mathsf{FH}_{m}$, in an analogous manner to the Farahat-Higman algebra. We first define $\mathsf{FH}_{m}$ as a free $R$-module equipped with a product, and then shortly prove that such a product realises it as an $R$-algebra. 

\begin{defn} \label{Defn:FHAlgebra}
Let $\mathsf{FH}_{m}$ be the free $R$-module with formal basis $\{K(\lambda) \ | \ \lambda\in\Lambda(m)\}$. Equip this module with the product given by the $R$-linear extension of
\[ K(\lambda)K(\mu)=\sum_{\nu\in\Lambda(m)}f_{\lambda,\mu}^{\nu}(t)K(\nu) \]
where $f_{\lambda,\mu}^{\nu}(t)$ are the structure polynomials given in \emph{\Cref{Thm:PolyStructureConstants}}.
\end{defn}

For $n\geq m$, $f_{\lambda,\mu}^{\nu}(n)$ gives the multiplicity of $K_{n}(\nu)$ in the product $K_{n}(\lambda)K_{n}(\mu)$ in $Z_{n,m}$. Hence $f_{\lambda,\mu}^{\nu}(t)=0$ whenever $||\nu||>||\lambda||+||\mu||$. As such the product described above for $\mathsf{FH}_{m}$ is well-defined since only a finite number of terms will appear in the product of any two elements.

We say a \emph{distributive ring} is an object satisfying all the axioms of a ring except possibly the associativity of the product and the existence of a multiplicative identity. Thus a ring is a special case of a distributive ring. From definition we certainly have that $\mathsf{FH}_{m}$ is a distributive ring. We seek to show that the product given in \emph{\Cref{Defn:FHAlgebra}} is associative and that $\mathsf{FH}_{m}$ possess a multiplicative identity. This will follow since $\mathsf{FH}_{m}$ is determined by the algebras $Z_{n,m}$ for all $n\geq m$, as we will now demonstrate. 

By definition of the structrure polynomials we must have a surjective homomorphism of distributive rings $\mathsf{pr}_{n,m}:\mathsf{FH}_{m}\rightarrow Z_{n,m}$ given by $\mathsf{pr}_{n,m}(K(\lambda))=K_{n}(\lambda)$ and $\mathsf{pr}_{n,m}(f(t))=f(n)$ for all $\lambda\in\Lambda(m)$ and $f(t)\in R$.

\begin{lem} \label{Lem:TrivialKerIntersection}
The intersection of the kernels of $\mathsf{pr}_{n,m}$ for all $n\geq m$ is trivial, that is
\[ K:=\bigcap_{n\geq m}\mathsf{Ker}(\mathsf{pr}_{n,m})=\{0\} \]
\end{lem}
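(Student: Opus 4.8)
The plan is to take an arbitrary element $X\in K$, expand it in the formal basis with its polynomial coefficients, and show that every coefficient is forced to be the zero polynomial. I would write
\[ X=\sum_{\lambda\in\Lambda(m)}f_{\lambda}(t)K(\lambda), \]
a finite $R$-linear combination. Applying $\mathsf{pr}_{n,m}$ for a fixed $n\geq m$ and using that $K_{n}(\lambda)=0$ precisely when $\mathsf{deg}_{m}(\lambda)>n$ (which follows from \Cref{Prop:mClassInequality}), the image collapses to
\[ \mathsf{pr}_{n,m}(X)=\sum_{\lambda\in\Lambda_{\leq n}(m)}f_{\lambda}(n)K_{n}(\lambda). \]
This reduces the lemma to showing that each $f_{\lambda}(t)$ is identically zero.

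The key step is to invoke the linear independence of the $m$-class sums. Recall that $\{K_{n}(\lambda)\mid\lambda\in\Lambda_{\leq n}(m)\}$ is a $\mathbb{Z}$-basis of $Z_{n,m}$, as noted before \Cref{Thm:PolyStructureConstants}. Since $X\in K$ means $\mathsf{pr}_{n,m}(X)=0$ for every $n\geq m$, the displayed expansion forces $f_{\lambda}(n)=0$ for every $\lambda\in\Lambda_{\leq n}(m)$, that is, for every $\lambda$ with $\mathsf{deg}_{m}(\lambda)\leq n$.

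To conclude, I would fix a single $\lambda\in\Lambda(m)$ and let $n$ range over all integers $n\geq\mathsf{deg}_{m}(\lambda)$. Since $\mathsf{deg}_{m}(\lambda)=||\lambda||^{m}+m$ is a finite integer (and at least $m$, so each such $\mathsf{pr}_{n,m}$ is indeed defined), there are infinitely many such $n$, and for each of them $\lambda\in\Lambda_{\leq n}(m)$, whence $f_{\lambda}(n)=0$. Thus $f_{\lambda}(t)\in\mathbb{Q}[t]$ vanishes at infinitely many integers and must therefore be the zero polynomial. As $\lambda$ was arbitrary, every coefficient of $X$ vanishes, giving $X=0$ and hence $K=\{0\}$.

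There is no deep obstacle here; the only point requiring care is the bookkeeping that $K_{n}(\lambda)$ survives exactly when $\mathsf{deg}_{m}(\lambda)\leq n$, so that the surviving $m$-class sums remain linearly independent and each fixed $\lambda$ is ``seen'' by infinitely many of the projections $\mathsf{pr}_{n,m}$. This is precisely the $m$-analogue of the separation property $\mathsf{pr}_{n}(X)=\mathsf{pr}_{n}(Y)$ for all $n$ implying $X=Y$ recorded for $\mathsf{FH}_{0}$ in the introduction.
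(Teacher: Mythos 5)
Your proposal is correct and follows essentially the same route as the paper: expand $X$ in the formal basis, apply $\mathsf{pr}_{n,m}$, use the $\mathbb{Z}$-linear independence of the surviving class sums $K_{n}(\lambda)$ to force $f_{\lambda}(n)=0$, and conclude each $f_{\lambda}(t)$ vanishes since it has infinitely many integer roots. The only cosmetic difference is that the paper chooses a single $n$ exceeding the maximum degree of all $\lambda$ in the support, whereas you fix $\lambda$ and let $n$ range over $n\geq\mathsf{deg}_{m}(\lambda)$; both are valid.
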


\begin{proof}
Let $L$ be any finite subset of $\Lambda(m)$, and
\[ x:=\sum_{\lambda\in L}p_{\lambda}(t)K(\lambda). \]
Assume that $x\in K$, we seek to show that $x=0$. Let $n>\text{max}\{\mathsf{deg}_{m}(\lambda) \ | \ \lambda\in L\}$, then
\[ \mathsf{pr}_{n,m}(x)=\sum_{\lambda\in L}p_{\lambda}(n)K_{n}(\lambda)=0, \] 
since $x\in\mathsf{Ker}(\mathsf{pr}_{n,m})$. However, the set $\{K_{n}(\lambda) \ | \ \lambda\in\Lambda(m), \ \mathsf{deg}_{m}(\lambda)\leq n\}$ forms a $\mathbb{Z}$-basis for $Z_{n,m}$, so for the above equation to hold we require that $n$ is a root of $p_{\lambda}(t)$ for each $\lambda\in L$. But this must hold for infinitely many choices of $n$, implying that each polynomial $p_{\lambda}(t)$ must be zero, and so $x=0$ as desired.

\end{proof}

\begin{lem} \label{Lem:FHEqualityByProjection}
For any $X,Y\in\mathsf{FH}_{m}$, then $X=Y$ if and only if $\mathsf{pr}_{n,m}(X)=\mathsf{pr}_{n,m}(Y)$ for all $n\geq m$.
\end{lem}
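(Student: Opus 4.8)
The plan is to deduce this lemma directly from \Cref{Lem:TrivialKerIntersection}, since the equality criterion is really just the triviality of the common kernel repackaged in terms of two elements rather than one.

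First I would dispatch the forward implication, which requires no work: since each $\mathsf{pr}_{n,m}$ is a well-defined map on $\mathsf{FH}_{m}$, the equality $X=Y$ immediately forces $\mathsf{pr}_{n,m}(X)=\mathsf{pr}_{n,m}(Y)$ for every $n\geq m$.

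For the reverse implication I would set $Z:=X-Y$ and use the fact that each $\mathsf{pr}_{n,m}$ is a homomorphism of distributive rings, hence in particular $R$-linear (additive). Assuming $\mathsf{pr}_{n,m}(X)=\mathsf{pr}_{n,m}(Y)$ for all $n\geq m$, additivity gives
\[ \mathsf{pr}_{n,m}(Z)=\mathsf{pr}_{n,m}(X)-\mathsf{pr}_{n,m}(Y)=0 \]
for every $n\geq m$. Thus $Z\in\mathsf{Ker}(\mathsf{pr}_{n,m})$ for all $n\geq m$, so $Z$ lies in the intersection $K=\bigcap_{n\geq m}\mathsf{Ker}(\mathsf{pr}_{n,m})$. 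By \Cref{Lem:TrivialKerIntersection} this intersection is $\{0\}$, whence $Z=0$ and therefore $X=Y$.

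I do not anticipate any genuine obstacle here: the content of the statement is entirely carried by \Cref{Lem:TrivialKerIntersection}, and the only ingredient beyond it is the additivity of the projections, which holds because they are (distributive) ring homomorphisms. The one point worth flagging explicitly is that subtraction is available in $\mathsf{FH}_{m}$ (it is a free $R$-module) and is respected by each $\mathsf{pr}_{n,m}$, so forming $X-Y$ and pushing it through the projections is legitimate.
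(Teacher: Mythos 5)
Your proof is correct and is essentially identical to the paper's: both reduce the reverse implication to observing that $X-Y$ lies in $\bigcap_{n\geq m}\mathsf{Ker}(\mathsf{pr}_{n,m})$, which is trivial by \Cref{Lem:TrivialKerIntersection}. The extra remarks on additivity of the projections and the availability of subtraction are fine but carry no additional mathematical content beyond what the paper leaves implicit.
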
 

\begin{proof}
The forward implication is immediate, while the reverse implication follows since it implies that $X-Y$ belongs to $\cap_{n\geq m}\mathsf{Ker}(\mathsf{pr}_{n,m})$ which equals $\{0\}$ from \emph{\Cref{Lem:TrivialKerIntersection}}.

\end{proof}

Thus we can solve any product in $\mathsf{FH}_{m}$ by computing a corresponding one in $Z_{n,m}$ for arbitrary $n\geq m$.

\begin{ex}
Consider the $2$-marked cycle shapes $\lambda=(1,2)(*,*)$ and $\mu=(1)(2)(*,*)$. We have that $\mathsf{deg}_{2}(\lambda)=\mathsf{deg}_{2}(\mu)=4$, hence $K_{n}(\lambda)$ and $K_{n}(\mu)$ are non-zero if and only if $n\geq4$. Let $n\geq 2$, then
\[ K_{n}(\lambda)K_{n}(\mu)=\left(\sum_{\{a,b\}\subseteq[n]\backslash [2]}(1,2)(a,b)\right)\left(\sum_{\{c,d\}\subseteq[n]\backslash [2]}(c,d)\right) =\sum_{\{a,b\}\subseteq[n]\backslash [2]}\sum_{\{c,d\}\subseteq[n]\backslash [2]}(1,2)(a,b)(c,d). \]
If the two cycles $(a,b)$ and $(c,d)$ are disjoint then the resulting permutation is $(a,b)(c,d)$, and there are two such ways to obtain such. If the two cycles share a single element, then the resulting permutation gives a 3-cycle $(a,b,c)$, and the number of ways to arrive at this 3-cycle from a product of two transpositions is three. Lastly if the two cycles agree then the result is the identity, and there are as many ways to do this as there are two-element subsets of $[n]\backslash [2]$. Thus altogether we have that
\[ K_{n}(\lambda)K_{n}(\mu)=2K_{n}(\tau_{1})+3K_{n}(\tau_{2})+\binom{n-2}{2}K_{n}(\tau_{3}) \]
where $\tau_{1}=(1,2)(*,*)(*,*)$, $\tau_{2}=(1,2)(*,*,*)$, and $\tau_{3}=(1,2)$. Note that
\[ \binom{n-2}{2}=\frac{1}{2}(n-2)(n-3). \]
Thus $K_{n}(\lambda)K_{n}(\mu)=2K_{n}(\tau_{1})+3K_{n}(\tau_{2})+\frac{1}{2}(n-2)(n-3)K_{n}(\tau_{3})$ for all $n\geq 2$, noting that when $n=2,3$ both sides of the equality are zero since $K_{n}(\tau)=0$ for all $\tau\in\{\lambda,\mu,\tau_{1},\tau_{2}\}$, and the polynomial in $n$ which is the coefficient of $K_{n}(\tau_{3})$ has both $2$ and $3$ as roots. Thus by \emph{\Cref{Lem:FHEqualityByProjection}} we have that the relation
\[ K(\lambda)K(\mu)=2K(\tau_{1})+3K(\tau_{2})+\frac{1}{2}(z-2)(z-3)K(\tau_{3}) \]
holds in $\mathsf{FH}_{m}$, in particular $f_{\lambda,\mu}^{\tau_{1}}(z)=2, f_{\lambda,\mu}^{\tau_{2}}(z)=3$, and $f_{\lambda,\mu}^{\tau_{3}}(z)=\frac{1}{2}(z-2)(z-3)$.
\end{ex}

\begin{prop} \label{Prop:FHisRAlg}
The distributive ring $\mathsf{FH}_{m}$ is an $R$-algebra.
\end{prop}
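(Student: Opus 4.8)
The plan is to bootstrap the two missing ring axioms — associativity of the product and the existence of a multiplicative identity — from the corresponding facts in the honest algebras $Z_{n,m}$, transporting them back across the projections $\mathsf{pr}_{n,m}$ by means of \Cref{Lem:FHEqualityByProjection}. Since each $Z_{n,m}$ is a subalgebra of the associative unital ring $\mathbb{Z}\mathfrak{S}_{n}$, it is itself associative, and the identity permutation (which commutes with all of $\mathsf{Stab}_{n}(m)$) supplies a two-sided identity. Everything else required of an $R$-algebra is already in place from \Cref{Defn:FHAlgebra}: $\mathsf{FH}_{m}$ is a distributive ring whose product is $R$-bilinear by construction, so once associativity and a unit are established the proposition follows.

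First I would prove associativity. Fix $X,Y,W\in\mathsf{FH}_{m}$; both $(XY)W$ and $X(YW)$ are well-defined elements of $\mathsf{FH}_{m}$ because only finitely many basis terms occur in any product, as observed after \Cref{Defn:FHAlgebra}. Applying the distributive-ring homomorphism $\mathsf{pr}_{n,m}$ and using that it respects products gives
\[ \mathsf{pr}_{n,m}\big((XY)W\big)=\big(\mathsf{pr}_{n,m}(X)\,\mathsf{pr}_{n,m}(Y)\big)\mathsf{pr}_{n,m}(W), \]
and symmetrically for $X(YW)$. Since the product in $Z_{n,m}$ is associative, these two right-hand sides coincide for every $n\geq m$, so $\mathsf{pr}_{n,m}((XY)W)=\mathsf{pr}_{n,m}(X(YW))$ for all $n\geq m$, and \Cref{Lem:FHEqualityByProjection} yields $(XY)W=X(YW)$.

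Next I would exhibit the unit. Let $\mathsf{id}_{m}=(1)(2)\cdots(m)\in\mathfrak{S}_{m}\subset\Lambda(m)$ be the $m$-marked cycle shape with no $*$ symbols. Then $\mathsf{deg}_{m}(\mathsf{id}_{m})=m\leq n$ for every $n\geq m$, and $\mathsf{CL}_{n,m}(\mathsf{id}_{m})=\{\mathrm{id}\}$, so $K_{n}(\mathsf{id}_{m})$ is the identity permutation, the multiplicative identity of $Z_{n,m}$. For any $X\in\mathsf{FH}_{m}$,
\[ \mathsf{pr}_{n,m}\big(K(\mathsf{id}_{m})X\big)=K_{n}(\mathsf{id}_{m})\,\mathsf{pr}_{n,m}(X)=\mathsf{pr}_{n,m}(X), \]
and likewise $\mathsf{pr}_{n,m}(X\,K(\mathsf{id}_{m}))=\mathsf{pr}_{n,m}(X)$, for all $n\geq m$. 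Applying \Cref{Lem:FHEqualityByProjection} (with the second element taken to be $X$ itself) then gives $K(\mathsf{id}_{m})X=X=X\,K(\mathsf{id}_{m})$, so $K(\mathsf{id}_{m})$ is a two-sided identity.

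I expect no serious obstacle here: the genuine content has already been deposited in \Cref{Thm:PolyStructureConstants}, which makes each $\mathsf{pr}_{n,m}$ a well-defined multiplicative map, and in \Cref{Lem:TrivialKerIntersection} and \Cref{Lem:FHEqualityByProjection}, which let one detect equality in $\mathsf{FH}_{m}$ by testing in all the $Z_{n,m}$ simultaneously. The one point meriting a line of care is confirming that $K_{n}(\mathsf{id}_{m})$ really is the unit of $Z_{n,m}$ for \emph{every} $n\geq m$, including small $n$; this is exactly where the inequality $\mathsf{deg}_{m}(\mathsf{id}_{m})=m\leq n$ is used to guarantee that the class sum is nonzero and equal to the identity permutation rather than vanishing.
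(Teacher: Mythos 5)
Your proposal is correct and follows essentially the same route as the paper: both arguments verify associativity by projecting the associator into each $Z_{n,m}$ and invoking \Cref{Lem:FHEqualityByProjection}, and both identify the unit as $K$ of the $m$-marked cycle shape $(1)(2)\cdots(m)$ (the paper's $\emptyset_{m}$), whose $m$-class sum is the identity of every $Z_{n,m}$. No gaps.
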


\begin{proof}
We need to prove that the product is associative and a multiplicative identity exists. For the identity, consider $K(\emptyset_{m})$ where $\emptyset_{m}$ is the $m$-marked cycle shape $(1)(2)\dots (m)$. The $m$-class $\mathsf{CL}_{n,m}(\emptyset_{m})$ contains only the identity, so $K_{n}(\emptyset_{m})$ is the identity of $Z_{n,m}$ for any $n\geq m$, implying $K(\emptyset_{m})$ is the identity of $\mathsf{FH}_{m}$ by \emph{\Cref{Lem:FHEqualityByProjection}}. For any $(X,Y,W)\in Z_{n,m}^{\times 3}$ let $[X,Y,W]:=(XY)W-X(YW)$. Then $\mathsf{pr}_{n,m}([X,Y,W])=0$ for all $n\geq m$ as $Z_{n,m}$ is associative. So $[X,Y,W]=0$ by \emph{\Cref{Lem:FHEqualityByProjection}}, showing that $\mathsf{FH}_{m}$ is associative.

\end{proof}

We end this section by describing two natural $R$-subalgebras of $\mathsf{FH}_{m}$.

\begin{lem} \label{Lem:SmSubAlgebra}
We have an injective $R$-algebra homomorphism $R\mathfrak{S}_{m}\rightarrow\mathsf{FH}_{m}$ defined by the $R$-linear extension of $\pi\mapsto K(\pi)$ for all $\pi\in\mathfrak{S}_{m} (\subset\Lambda(m))$.
\end{lem}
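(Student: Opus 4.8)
The plan is to exploit the faithful family of projections $\mathsf{pr}_{n,m}\colon\mathsf{FH}_{m}\to Z_{n,m}$ together with \Cref{Lem:FHEqualityByProjection}, so that every identity we need in $\mathsf{FH}_{m}$ can be checked simultaneously in the genuine centraliser algebras $Z_{n,m}$ for all $n\geq m$. The map $R\mathfrak{S}_{m}\to\mathsf{FH}_{m}$ is already declared to be the $R$-linear extension of $\pi\mapsto K(\pi)$, so $R$-linearity is automatic; what must be verified is that it respects the multiplication and the unit, and that it is injective.

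The key observation I would isolate first is that for $\pi\in\mathfrak{S}_{m}$, viewed as an $m$-marked cycle shape containing no symbol $*$, one has $||\pi||^{m}=0$ and hence $\mathsf{deg}_{m}(\pi)=m$. Consequently the $m$-class $\mathsf{CL}_{n,m}(\pi)$ is a singleton: since there are no $*$ symbols to replace, its unique element is the permutation of $\mathfrak{S}_{n}$ that acts as $\pi$ on $[m]$ and fixes $[n]\backslash[m]$. Therefore $K_{n}(\pi)$ is precisely this single group element, and I will identify it with $\pi$ under the embedding $\mathfrak{S}_{m}\subset\mathfrak{S}_{n}$. With this in hand, multiplicativity reduces to a one-line computation in $\mathbb{Z}\mathfrak{S}_{n}$: for $\pi,\sigma\in\mathfrak{S}_{m}$,
\[ K_{n}(\pi)K_{n}(\sigma)=\pi\sigma=K_{n}(\pi\sigma), \]
where $\pi\sigma$ is again an element of $\mathfrak{S}_{m}$ because both factors fix $[n]\backslash[m]$ pointwise, so the group product computed in $\mathfrak{S}_{m}$ agrees with the product computed in $\mathfrak{S}_{n}$. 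Applying $\mathsf{pr}_{n,m}$ to both $K(\pi)K(\sigma)$ and $K(\pi\sigma)$ thus yields the same element of $Z_{n,m}$ for every $n\geq m$, and \Cref{Lem:FHEqualityByProjection} gives $K(\pi)K(\sigma)=K(\pi\sigma)$ in $\mathsf{FH}_{m}$. For the unit, the identity of $\mathfrak{S}_{m}$ corresponds to the shape $\emptyset_{m}=(1)(2)\cdots(m)$, and $K(\emptyset_{m})$ is the multiplicative identity of $\mathsf{FH}_{m}$ by \Cref{Prop:FHisRAlg}; hence the map is a unital $R$-algebra homomorphism.

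Injectivity I would then read off from the basis structure: $R\mathfrak{S}_{m}$ is free as an $R$-module on the group elements $\{\pi\mid\pi\in\mathfrak{S}_{m}\}$, and the map carries this basis to $\{K(\pi)\mid\pi\in\mathfrak{S}_{m}\}$, a subset of the formal $R$-basis $\{K(\lambda)\mid\lambda\in\Lambda(m)\}$ of $\mathsf{FH}_{m}$ indexed by the distinct shapes $\pi\in\mathfrak{S}_{m}\subset\Lambda(m)$. Distinct group elements give distinct shapes and hence distinct, $R$-linearly independent basis vectors, so an $R$-linear map sending a free basis to linearly independent elements is injective.

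I do not expect a serious obstacle here; the only point demanding care is the bookkeeping of the singleton-class identification, namely confirming that $K_{n}(\pi)=\pi$ for $\pi\in\mathfrak{S}_{m}$ and that the product $\pi\sigma$ in the group $\mathfrak{S}_{m}$ matches the product of the corresponding permutations of $\mathfrak{S}_{n}$. Once that identification is pinned down, everything else is forced by $R$-linearity and \Cref{Lem:FHEqualityByProjection}.
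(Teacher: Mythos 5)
Your argument is essentially identical to the paper's: the paper likewise observes that $\mathsf{CL}_{n,m}(\pi)=\{\pi\}$ so that $\mathsf{pr}_{n,m}(K(\pi))=K_{n}(\pi)=\pi$, deduces multiplicativity from \emph{\Cref{Lem:FHEqualityByProjection}}, and reads off injectivity from the fact that the $K(\pi)$ are distinct members of the formal $R$-basis of $\mathsf{FH}_{m}$. Your write-up simply spells out the bookkeeping more explicitly; there is no gap.
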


\begin{proof}
For $n\geq m$, $\mathsf{CL}_{n,m}(\pi)=\{\pi\}$ for any $\pi\in\mathfrak{S}_{m}$. Hence $\mathsf{pr}_{n,m}(K(\pi))=K_{n}(\pi)=\pi$ showing we have a homomorphism by \emph{\Cref{Lem:FHEqualityByProjection}}. Injectivity follows by construction of $\mathsf{FH}_{m}$.

\end{proof}

Consider the injective map $(-)_{m}:\Lambda(0)\rightarrow\Lambda(m)$ which sends a cycle shape $\lambda\in\Lambda(0)$ to the $m$-marked cycle shape $\lambda_{m}$ obtained from $\lambda$ by adjoining the trivial cycles $(1)(2)\dots (m)$.

\begin{lem} \label{Lem:FH0SubAlgebra}
We have an injective $R$-algebra homomorphism $\mathsf{FH}_{0}\rightarrow\mathsf{FH}_{m}$ defined by the $R$-linear extension of $K(\lambda)\mapsto K(\lambda_{m})$ for all $\lambda\in\Lambda(0)$.
\end{lem}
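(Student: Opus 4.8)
The plan is to follow exactly the strategy used in \Cref{Lem:SmSubAlgebra}, reducing the claim to the analogous statement at each finite level $n$ and then invoking \Cref{Lem:FHEqualityByProjection}. First I would set up the relevant projections. The source algebra $\mathsf{FH}_{0}$ comes with its own family of surjections $\mathsf{pr}_{n,0}:\mathsf{FH}_{0}\rightarrow Z_{n,0}=Z(\mathbb{Z}\mathfrak{S}_{n})$ sending $K(\lambda)\mapsto K_{n}(\lambda)$, while the target $\mathsf{FH}_{m}$ carries the surjections $\mathsf{pr}_{n,m}$ onto $Z_{n,m}$. The key geometric observation is that for a cycle shape $\lambda\in\Lambda(0)$ and its image $\lambda_{m}\in\Lambda(m)$ obtained by adjoining the trivial cycles $(1)(2)\cdots(m)$, the $m$-class $\mathsf{CL}_{n,m}(\lambda_{m})$ consists precisely of those permutations of $[n]$ whose nontrivial cycles lie entirely within $[n]\backslash[m]$ and have cycle shape $\lambda$. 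In particular $Z(\mathbb{Z}\mathfrak{S}_{n})\hookrightarrow Z_{n,m}$ via the inclusion $\mathbb{Z}\mathfrak{S}_{n}\supseteq\mathbb{Z}\mathfrak{S}_{n}$ — more precisely, every permutation of $[n]\backslash[m]$ centralises $\mathsf{Stab}_{n}(m)$ appropriately, so class-sums of $\mathfrak{S}_{n}$ supported away from $[m]$ give genuine elements of $Z_{n,m}$.

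The heart of the argument is to establish, at each finite level, that the assignment $K_{n}(\lambda)\mapsto K_{n}(\lambda_{m})$ is a (not-quite-ring, but structure-respecting) map compatible with the projections. I would proceed as follows. First, verify that the diagram commutes in the sense that the composite $\mathsf{FH}_{0}\rightarrow\mathsf{FH}_{m}\xrightarrow{\mathsf{pr}_{n,m}} Z_{n,m}$ agrees with $\mathsf{FH}_{0}\xrightarrow{\mathsf{pr}_{n,0}}Z(\mathbb{Z}\mathfrak{S}_{n})\hookrightarrow Z_{n,m}$, where the last inclusion $\iota_{n}$ is the one just described. This reduces to the single computation $\mathsf{pr}_{n,m}(K(\lambda_{m}))=K_{n}(\lambda_{m})=\iota_{n}(K_{n}(\lambda))$, which holds since the permutations in $\mathsf{CL}_{n,m}(\lambda_{m})$ are exactly the $\mathfrak{S}_{n}$-permutations of shape $\lambda$ fixing each of $1,\dots,m$. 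Second, to check the map $\Phi:\mathsf{FH}_{0}\rightarrow\mathsf{FH}_{m}$, $K(\lambda)\mapsto K(\lambda_{m})$, is a ring homomorphism, I would show $\mathsf{pr}_{n,m}(\Phi(XY))=\mathsf{pr}_{n,m}(\Phi(X)\Phi(Y))$ for all $n\geq m$ and all $X,Y\in\mathsf{FH}_{0}$, then conclude $\Phi(XY)=\Phi(X)\Phi(Y)$ by \Cref{Lem:FHEqualityByProjection}. The left side equals $\iota_{n}(\mathsf{pr}_{n,0}(XY))=\iota_{n}(\mathsf{pr}_{n,0}(X)\mathsf{pr}_{n,0}(Y))$ since $\mathsf{pr}_{n,0}$ is a ring map and $\iota_{n}$ a ring homomorphism; the right side equals $\mathsf{pr}_{n,m}(\Phi(X))\mathsf{pr}_{n,m}(\Phi(Y))=\iota_{n}(\mathsf{pr}_{n,0}(X))\iota_{n}(\mathsf{pr}_{n,0}(Y))$, and these match.

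The main obstacle, and the step deserving real care, is verifying that $\iota_{n}:Z(\mathbb{Z}\mathfrak{S}_{n})\rightarrow Z_{n,m}$ is genuinely a ring homomorphism (equivalently, that products of class-sums supported on $[n]\backslash[m]$ stay supported on $[n]\backslash[m]$ as elements of $Z_{n,m}$, with the same multiplicities). This is where one must confirm that multiplying two permutations each fixing $[m]$ pointwise yields a permutation fixing $[m]$ pointwise, so the subspace of $\mathbb{Z}\mathfrak{S}_{n}$ spanned by $\mathfrak{S}(\{m+1,\dots,n\})$ is a subalgebra, and that $Z(\mathbb{Z}\mathfrak{S}_{n})$ maps into its centraliser-subalgebra intersection correctly. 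In fact $\iota_n$ is just the inclusion induced by $\mathfrak{S}_{n-m}\cong\mathfrak{S}(\{m+1,\dots,n\})\hookrightarrow\mathfrak{S}_n$ followed by the observation that $Z(\mathbb{Z}\mathfrak{S}_{n-m})\subseteq Z_{n,m}$; I would state this cleanly rather than recompute structure constants. Finally, injectivity of $\Phi$ is immediate from the construction of $\mathsf{FH}_{m}$ as a free $R$-module on $\{K(\nu)\mid\nu\in\Lambda(m)\}$ together with the injectivity of $(-)_{m}$, exactly as in the proof of \Cref{Lem:SmSubAlgebra}, so that distinct basis elements $K(\lambda)$ map to distinct basis elements $K(\lambda_{m})$.
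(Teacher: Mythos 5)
Your overall strategy --- reduce everything to the finite levels and invoke \emph{\Cref{Lem:FHEqualityByProjection}} --- is the same as the paper's, but the step you yourself single out as ``deserving real care'' is exactly where the argument breaks, and it cannot be repaired in the form you propose. There are two candidate maps $\iota_{n}$ in your write-up and neither does what you need. If $\iota_{n}$ is the honest inclusion $Z(\mathbb{Z}\mathfrak{S}_{n})\hookrightarrow Z_{n,m}$ (the centre sits inside every centraliser), then $\iota_{n}(K_{n}(\lambda))=K_{n}(\lambda)$ is the sum over \emph{all} permutations of $[n]$ of cycle shape $\lambda$, whereas $\mathsf{pr}_{n,m}(K(\lambda_{m}))=K_{n}(\lambda_{m})$ is the sum over only those supported on $[n]\backslash[m]$; so your identity $\mathsf{pr}_{n,m}(K(\lambda_{m}))=\iota_{n}(\mathsf{pr}_{n,0}(K(\lambda)))$ fails. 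The linear map $Z(\mathbb{Z}\mathfrak{S}_{n})\rightarrow Z_{n,m}$ that truncates a class sum to the permutations fixing $[m]$ pointwise is not a ring homomorphism: for $n=3$, $m=1$, the square of $(1,2)+(1,3)+(2,3)$ is $3+3\big((1,2,3)+(1,3,2)\big)$, which truncates to $3$, while the truncation $(2,3)$ squares to $1$.

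If instead $\iota_{n}$ is, as you say at the end, the genuine ring embedding $Z(\mathbb{Z}\mathfrak{S}_{n-m})\cong Z(\mathbb{Z}\mathfrak{S}(\{m+1,\dots,n\}))\subseteq Z_{n,m}$, then it matches $K_{n}(\lambda_{m})$ with $K_{n-m}(\lambda)=\mathsf{pr}_{n-m,0}(K(\lambda))$, so your commuting square must use $\mathsf{pr}_{n-m,0}$ on the source, not $\mathsf{pr}_{n,0}$. That square commutes on the $\mathbb{Z}$-span of the basis elements but is not compatible with the $R$-actions, because $\mathsf{pr}_{n,m}$ evaluates $t$ at $n$ while $\mathsf{pr}_{n-m,0}$ evaluates $t$ at $n-m$. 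Chasing this through gives $f_{\lambda,\mu}^{\nu}(n)=f_{\lambda_{m},\mu_{m}}^{\nu_{m}}(n+m)$ for all $n\geq 0$, hence $f_{\lambda_{m},\mu_{m}}^{\nu_{m}}(t)=f_{\lambda,\mu}^{\nu}(t-m)$ --- a shift, not an equality. Concretely, for $\lambda=\mu=(*,*)$ the coefficient of $K(\emptyset)$ in $K(\lambda)^{2}$ is $\binom{t}{2}$ in $\mathsf{FH}_{0}$, while the coefficient of $K(\emptyset_{m})$ in $K(\lambda_{m})^{2}$ is $\binom{t-m}{2}$ in $\mathsf{FH}_{m}$ (compare the worked example after \emph{\Cref{Lem:FHEqualityByProjection}}, where the coefficient of $K(\tau_{3})$ is $\tfrac{1}{2}(t-2)(t-3)$). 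So the $R$-linear extension of $K(\lambda)\mapsto K(\lambda_{m})$ is not multiplicative; it only becomes a ring homomorphism if one simultaneously substitutes $t\mapsto t-m$ on scalars. To be fair, the paper's own proof asserts $f_{\lambda_{m},\mu_{m}}^{\nu_{m}}(t)=f_{\lambda,\mu}^{\nu}(t)$ after establishing only the shifted identity, so it glosses over the same point --- but your write-up does not surface the level shift at all, and any correct proof (or corrected statement) has to confront it.
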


\begin{proof}
By \emph{\Cref{Lem:FHEqualityByProjection}} one can show that $f_{\lambda_{m},\mu_{m}}^{\nu}(t)=0$ for all $\lambda,\mu\in\Lambda(0)$ whenever $\nu\neq\gamma_{m}$ for any $\gamma\in\Lambda(0)$. Thus it suffices to show that $f_{\lambda_{m},\mu_{m}}^{\nu_{m}}(t)=f_{\lambda,\mu}^{\nu}(t)$ for any $\lambda,\mu,\nu\in\Lambda(0)$. Let $\mathsf{FH}_{m}^{*}$ denote the $R$-subalgebra of $\mathsf{FH}_{m}$ generated by $K(\lambda_{m})$ for all $\lambda\in\Lambda(0)$. Clearly for any $n\geq 0$ we have that $\mathsf{pr}_{n+m,m}(\mathsf{FH}_{m}^{*})=Z(\mathbb{Z}\mathfrak{S}([n+m]\backslash [m]))$. Similarly for any $n\geq 0$ we have that $\mathsf{pr}_{n,0}(\mathsf{FH}_{0})=Z(\mathbb{Z}\mathfrak{S}_{n})$. Clearly we have a $\mathbb{Z}$-algebra isomorphism $Z(\mathbb{Z}\mathfrak{S}([n+m]\backslash [m])) \cong Z(\mathbb{Z}\mathfrak{S}_{n})$, which may be realised by the $R$-linear extension of conjugation $\sigma_{n}(-)\sigma_{n}^{-1}:Z(\mathbb{Z}\mathfrak{S}([n+m]\backslash [m])) \rightarrow Z(\mathbb{Z}\mathfrak{S}_{n})$ where $\sigma_{n}$ is any permutation of $\mathfrak{S}_{n+m}$ such that $\sigma([n+m]\backslash [m])=[n]$. By definition of $(-)_{m}$ we have that $K_{n}(\lambda)=\sigma_{n}K_{n+m}(\lambda_{m})\sigma_{n}^{-1}$ for all $n\geq 0$ and $\lambda\in\Lambda(0)$. Hence by applying \emph{\Cref{Lem:FHEqualityByProjection}} we deduce that $f_{\lambda_{m},\mu_{m}}^{\nu_{m}}(t)=f_{\lambda,\mu}^{\nu}(t)$ for any $\lambda,\mu,\nu\in\Lambda(0)$. Injectivity of $\mathsf{FH}_{0}\rightarrow\mathsf{FH}_{m}$ follows since $\{K(\lambda_{m}) \ | \ \lambda\in\Lambda(0)\}$ is $R$-linearly independent by construction.

\end{proof}

With this result in mind we will often identify $\mathsf{FH}_{0}$ as the $R$-subalgebra of $\mathsf{FH}_{m}$ consisting of $R$-linear combination of basis elements $K(\lambda)$ where $\lambda$ contains the trivial cycles $(1)(2)\cdots (m)$, i.e. indetifying $\mathsf{FH}_{0}$ with the $R$-subalgebra $\mathsf{FH}_{m}^{*}$ described in the above proof. Since $\mathsf{pr}_{n,0}(\mathsf{FH}_{0})=Z(\mathbb{Z}\mathfrak{S}_{n})$, it is easy to deduce from \emph{\Cref{Lem:FHEqualityByProjection}} that $\mathsf{FH}_{0}$ is commutative, and hence a commutative $R$-subalgebra of $\mathsf{FH}_{m}$.


\section{Monoid of $m$-Marked Cycle Shapes}

To show certain structural properties of the algebras $\mathsf{FH}_{m}$ it will prove helpful to equip the set of $m$-marked cycle shapes $\Lambda(m)$ with an associative product. To do such we will construct a monoid which we may identify with $\Lambda(m)$. This will also allow us to give a more concrete criteria for a permutation to belong to an $m$-class.

Let $\mathcal{U}_{m}$ be the free commutative monoid on the set $\{u_{1},\dots,u_{m}\}$. We have a natural left monoid action $\varphi:\mathfrak{S}_{m}\rightarrow\mathsf{End}(\mathcal{U}_{m})$ given by $\varphi(\pi)(u_{i})=u_{\pi(i)}$ for all $\pi\in\mathfrak{S}_{m}$ and $i\in[m]$, where $\mathsf{End}(\mathcal{U}_{m})$ is the monoid of all monoid endomorphisms $\mathcal{U}_{m}\rightarrow\mathcal{U}_{m}$. Now let $\mathcal{C}$ denote the free commutative monoid on the infinite set $\{c_{1},c_{2},\dots\}$. Then the monoid we are interested in is $\left(\mathfrak{S}_{m}\ltimes_{\varphi}\mathcal{U}_{m}\right)\times\mathcal{C}$, where $\ltimes_{\varphi}$ denotes the semidirect product with respect to the action $\varphi$. The underlying set is $\mathfrak{S}_{m}\times\mathcal{U}_{m}\times\mathcal{C}$ and the product is given by
\[ (\pi,p,a)(\sigma,q,b)=(\pi\sigma,\varphi(\sigma)(p)q,ab) \]
for $(\pi,p,a),(\sigma,q,b)\in\mathfrak{S}_{m}\times\mathcal{U}_{m}\times\mathcal{C}$. We abuse notation and write $\pi=(\pi,1,1)$, $p=(1,p,1)$, and $a=(1,1,a)$. For any set $A$ we let $\mathbb{Z}_{\geq 0}^{A}$ denote the set of all functions $\bm{f}:A\rightarrow\mathbb{Z}_{\geq 0}$ with finite support, that is the subset of elements $a\in A$ such that $\bm{f}(a)\neq 0$ is finite. Then we define
\[ u^{\bm{d}}:=\prod_{i=1}^{m}u_{i}^{\bm{d}(i)} \ \text{ and } \ c^{\bm{l}}:=\prod_{i\in\mathbb{N}}c_{i}^{\bm{l}(i)} \]
for any $\bm{d}\in\mathbb{Z}_{\geq 0}^{[m]}$ and $\bm{l}\in\mathbb{Z}_{\geq 0}^{\mathbb{N}}$, which are well-defined by commutativity and since $\bm{l}$ has finite support. Then as sets we have that
\[ \left(\mathfrak{S}_{m}\ltimes_{\varphi}\mathcal{U}_{m}\right)\times\mathcal{C} = \left\{ \pi u^{\bm{d}}c^{\bm{l}} \ | \ \pi\in\mathfrak{S}_{m}, \bm{d}\in\mathbb{Z}_{\geq 0}^{[m]}, \bm{l}\in\mathbb{Z}_{\geq 0}^{\mathbb{N}} \right\}. \]
Moreover, the product may be described by
\begin{equation} \label{mMarkedCycleShapeProduct}
(\pi u^{\bm{d}}c^{\bm{l}})(\sigma u^{\bm{e}}c^{\bm{k}})=\pi\sigma u^{\sigma\circ\bm{d}+\bm{e}}c^{\bm{l}+\bm{k}}
\end{equation}
where $\sigma\circ\bm{d}\in\mathbb{Z}_{\geq 0}^{[m]}$ is defined by $(\sigma\circ\bm{d})(i)=\bm{d}(\sigma^{-1}(i))$. In particular, the operator $\circ$ realises the set $\mathbb{Z}_{\geq 0}^{[m]}$ as a left action set of $\mathfrak{S}_{m}$. The set $\Lambda(m)$ of $m$-marked cycle shapes is in a natural one-to-one correspondence with this monoid: Consider the map $\left(\mathfrak{S}_{m}\ltimes_{\varphi}\mathcal{U}_{m}\right)\times\mathcal{C}\rightarrow\Lambda(m)$ given by sending $\pi u^{\bm{d}}c^{\bm{l}}$ to the $m$-marked cycle shape consisting, for each $i\in\mathbb{N}$, $\bm{l}(i)$ many cycles of length $i+1$ containing only the symbol $*$, and where the remaining cycles are constructed from those of $\pi$ by adding $\bm{d}(i)$ symbols $*$ after the entry $i$, for each $i\in[m]$ (see examples below). This map is a bijection since there is a natural inverse to consider. With this correspondence in mind, we identify $\Lambda(m)$ with the monoid $\left(\mathfrak{S}_{m}\ltimes_{\varphi}\mathcal{U}_{m}\right)\times\mathcal{C}$.

\begin{exs}
\begin{itemize} \
\item[(1)] Let $\lambda$ be the $6$-marked cycle shape given in \emph{\Cref{Ex:mMCS}}, then we have the identification
\[ (1,4)(2,6)(3)(5)u_{1}^{\textcolor{green}{2}}u_{3}^{\textcolor{cyan}{2}}u_{4}^{\textcolor{violet}{1}}c_{1}^{\textcolor{red}{1}}c_{2}^{\textcolor{blue}{1}} = (2,6)(5)(1,\textcolor{green}{*},\textcolor{green}{*},4,\textcolor{violet}{*})(3,\textcolor{cyan}{*},\textcolor{cyan}{*})\textcolor{red}{(*,*)}\textcolor{blue}{(*,*,*)}, \]
where we have added colours to aid in demonstrating the correspondence.
\item[(2)] Consider the $4$-marked cycle shapes written as elements of $\left(\mathfrak{S}_{m}\ltimes_{\varphi}\mathcal{U}_{m}\right)\times\mathcal{C}$ by $\lambda=(1,2)(3,4)u_{1}u_{2}$, and $\mu=(1,4)(2)(3)u_{3}^{2}c_{1}$. Their product is given by
\[ \lambda\mu=\left((1,2)(3,4)u_{1}u_{2}\right)\left((1,4)(2)(3)u_{3}^{2}c_{1}\right)=(1,3,4,2)u_{2}u_{3}^{2}u_{4}c_{1}. \]
Thus when writting $\lambda$ and $\mu$ as $m$-marked cycle shapes we have
\[ \lambda\mu=\left((1,*,2,*)(3,4)\right)\left((1,4)(2)(3,*,*)(*,*)\right)=(1,3,*,*,4,*,2,*)(*,*). \]
\end{itemize}
\end{exs}

Given any element $\lambda\in\Lambda(m)$, we will freely move between viewing it as an $m$-marked cycle shape or as an element of $\left(\mathfrak{S}_{m}\ltimes_{\varphi}\mathcal{U}_{m}\right)\times\mathcal{C}$, i.e. as an expression of the form $\pi u^{\bm{d}}c^{\bm{l}}$. As we know, a permutation $\sigma$ belongs to the $m$-class $\mathsf{CL}_{\mathbb{N},m}(\lambda)$ if and only if $\sigma$ may be obtained from $\lambda$ by replacing the symbols $*$ with distinct elements from $\mathbb{N}\backslash [m]$. When expressing $\lambda=\pi u^{\bm{d}}c^{\bm{l}}$, then from the above discussion it is clear that we have the following equivalent criteria for when $\sigma$ belongs to $\mathsf{CL}_{\mathbb{N},m}(\lambda)$ given in terms of $\pi$, $\bm{d}$, and $\bm{l}$.

\begin{lem} \label{Lem:mClassCriteria}
We have $\sigma\in\mathsf{CL}_{\mathbb{N},m}(\pi u^{\bm{d}}c^{\bm{l}})$ if and only if the following hold:
\begin{itemize}
\item[(i)] the number of cycles of $\sigma$ of length $i+1$ containing no elements of $[m]$ is $\bm{l}(i)$,
\item[(ii)] $\sigma^{\bm{d}(i)+1}(i)=\pi(i)$ for each $i\in [m]$, and $\sigma^{n}(i)\not\in [m]$ for any $1\leq n \leq \bm{d}(i)$.
\end{itemize}
\begin{flushright} $\square$ \end{flushright}
\end{lem}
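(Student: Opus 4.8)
The plan is to unwind the correspondence between $\Lambda(m)$ and the monoid $\left(\mathfrak{S}_{m}\ltimes_{\varphi}\mathcal{U}_{m}\right)\times\mathcal{C}$ established just above the statement, and then translate the defining characterisation of $\mathsf{CL}_{\mathbb{N},m}(\lambda)$ (namely, $\sigma$ is obtained from $\lambda$ by substituting distinct elements of $\mathbb{N}\backslash[m]$ for the symbols $*$) directly into the two combinatorial conditions (i) and (ii). First I would recall that for $\lambda=\pi u^{\bm{d}}c^{\bm{l}}$, the $m$-marked cycle shape consists of two disjoint families of cycles: those cycles built entirely from the symbol $*$, of which there are $\bm{l}(i)$ of length $i+1$ for each $i\in\mathbb{N}$; and those cycles carrying the elements of $[m]$, obtained from the cycles of $\pi$ by inserting $\bm{d}(i)$ copies of $*$ immediately after the entry $i$. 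Since a substitution replaces the symbols $*$ by distinct values of $\mathbb{N}\backslash[m]$ and fixes the entries in $[m]$, the resulting permutation $\sigma$ inherits exactly this cycle structure.

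The argument then splits cleanly along this dichotomy. For the cycles containing no element of $[m]$: these arise precisely from the $*$-only cycles of $\lambda$, whose lengths are recorded by $\bm{l}$; since a cycle of $\lambda$ of length $i+1$ consisting only of $*$ becomes, after substitution, a cycle of $\sigma$ of length $i+1$ disjoint from $[m]$, and conversely every such cycle of $\sigma$ must have come from a $*$-only cycle of $\lambda$, condition (i) is equivalent to saying $\sigma$ has the correct $\bm{l}$-count of $[m]$-free cycles. For the cycles meeting $[m]$: fix $i\in[m]$ and consider the cycle of $\lambda$ containing $i$. By construction of the correspondence, immediately after $i$ there are exactly $\bm{d}(i)$ symbols $*$, and then the next element of $[m]$ appearing in that cycle is $\pi(i)$. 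After substitution, these $\bm{d}(i)$ symbols become distinct elements of $\mathbb{N}\backslash[m]$ through which $\sigma$ passes before returning to $[m]$ at $\pi(i)$. This says exactly that $\sigma(i),\sigma^{2}(i),\dots,\sigma^{\bm{d}(i)}(i)$ all lie in $\mathbb{N}\backslash[m]$ while $\sigma^{\bm{d}(i)+1}(i)=\pi(i)$, which is condition (ii).

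I would then check both directions of the biconditional. The forward direction is the reading above: if $\sigma\in\mathsf{CL}_{\mathbb{N},m}(\lambda)$ then $\sigma$ is a $*$-substitution of $\lambda$ and (i), (ii) are read off as described. For the converse, given $\sigma$ satisfying (i) and (ii), I would reconstruct an explicit substitution realising $\sigma$ from $\lambda$: condition (ii) dictates, for each $i\in[m]$, which distinct elements of $\mathbb{N}\backslash[m]$ fill the $\bm{d}(i)$ starred slots after $i$ (namely $\sigma(i),\dots,\sigma^{\bm{d}(i)}(i)$, which are forced to be distinct and outside $[m]$), thereby recovering every cycle of $\sigma$ that meets $[m]$; condition (i) guarantees the remaining cycles of $\sigma$, all disjoint from $[m]$, match the $*$-only cycles of $\lambda$ in multiplicity and length, so these can be matched up by any bijection of their entries with the remaining $*$ symbols. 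Assembling these assignments produces a single substitution of distinct elements of $\mathbb{N}\backslash[m]$ for the $*$'s of $\lambda$ yielding $\sigma$, hence $\sigma\in\mathsf{CL}_{\mathbb{N},m}(\lambda)$. The main obstacle is bookkeeping rather than conceptual: one must verify that the values $\sigma(i),\dots,\sigma^{\bm{d}(i)}(i)$ appearing across different $i\in[m]$ are globally distinct and disjoint from the entries of the $[m]$-free cycles, so that the assignments from (i) and (ii) patch together into a well-defined injective substitution. This follows because distinct cycles of $\sigma$ are disjoint and (ii) confines each block $\sigma(i),\dots,\sigma^{\bm{d}(i)}(i)$ to the portion of $i$'s cycle strictly between consecutive elements of $[m]$, but I would make this disjointness explicit to close the argument cleanly.
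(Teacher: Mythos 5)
Your argument is correct and is precisely the unwinding of the bijection $\Lambda(m)\cong\left(\mathfrak{S}_{m}\ltimes_{\varphi}\mathcal{U}_{m}\right)\times\mathcal{C}$ that the paper itself relies on: the paper states the lemma with no written proof, asserting it is clear from the preceding discussion, and your forward/converse reading of the $*$-substitution (including the disjointness bookkeeping for the blocks $\sigma(i),\dots,\sigma^{\bm{d}(i)}(i)$) is exactly the intended verification. No gaps.
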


Item $(ii)$ above gives a concrete way of describing the relative positions of the elements of $[m]$ within a permutation $\sigma$, and is completely captured by $\pi$ and $\bm{d}$. Recall the quantities $||\lambda||$, $||\lambda||_{m}$, and $||\lambda||^{m}$ for an $m$-marked cycle shape $\lambda\in\Lambda(m)$. Then when expressing $\lambda=\pi u^{\bm{d}}c^{\bm{l}}$, one can deduce that
\[
||\lambda|| = |\mathsf{Sup}(\pi)|+\sum_{i\in [m]}\bm{d}(i)+\sum_{i\in\mathbb{N}}(i+1)\bm{l}(i), \hspace{3mm}
||\lambda||^{m} = \sum_{i\in [m]}\bm{d}(i)+\sum_{i\in\mathbb{N}}(i+1)\bm{l}(i), \hspace{3mm}
||\lambda||_{m} = |\mathsf{Sup}(\pi)|.
\]
Recall the degree function $\mathsf{deg}_{m}:\Lambda(m)\rightarrow\mathbb{Z}_{\geq m}$ given by $\mathsf{deg}_{m}(\lambda)=||\lambda||^{m}+m$. The codomain $\mathbb{Z}_{\geq m}$ is a monoid under the addition $a+_{m}b:=a+b-m$ for all $a,b\in\mathbb{Z}_{\geq m}$ with $m$ the identity. Viewing $\Lambda(m)$ as a monoid, then by consulting \emph{\Cref{mMarkedCycleShapeProduct}} one can deduce that $\mathsf{deg}_{m}$ is a monoid homomorphism, and hence provides a grading for $\Lambda(m)$. For $n\in\mathbb{Z}_{\geq 0}$ define $\Lambda_{n}(m):=\{\lambda\in\Lambda(m) \ | \ \mathsf{deg}_{m}(\lambda)=n\}$ to be the $n$-th graded component of $\Lambda(m)$. We have that $\Lambda_{n}(m)$ is non-empty if and only if $n\geq m$. We have the disjoint unions
\[ \Lambda(m)=\bigsqcup_{n\geq m}\Lambda_{n}(m), \hspace{10mm} \Lambda_{\leq n}(m)=\bigsqcup_{n\geq i\geq m}\Lambda_{i}(m). \]
We now describe generating functions which record the cardinalities of $\Lambda_{n}(m)$ and $\Lambda_{\leq n}(m)$. For a generating function $F(t)$ in a formal variable $t$, we let $[t^{n}]F(t)$ denote the coefficient of the $n$-th degree term $t^{n}$.

\begin{prop} \label{XMCSGenFuncProp}
Let $t$ be a formal variable. As generating functions we have
\begin{equation} \label{ProperXMCSGenFunc}
\sum_{n=0}^{\infty}|\Lambda_{n}(m)|t^{n} = \frac{m!t^{m}}{(1-t)^{m}}\prod_{n=2}^{\infty}\frac{1}{1-t^{n}},
\end{equation}
and
\begin{equation} \label{XMCSGenFunc}
\sum_{n=0}^{\infty}|\Lambda_{\leq n}(m)|t^{n} = \frac{m!t^{m}}{(1-t)^{m}}\prod_{n=1}^{\infty}\frac{1}{1-t^{n}}.
\end{equation}
\end{prop}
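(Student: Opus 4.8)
The plan is to exploit the monoid identification $\Lambda(m) \cong (\mathfrak{S}_{m} \ltimes_{\varphi} \mathcal{U}_{m}) \times \mathcal{C}$, under which every element is written uniquely as $\pi u^{\bm{d}} c^{\bm{l}}$ with $\pi \in \mathfrak{S}_{m}$, $\bm{d} \in \mathbb{Z}_{\geq 0}^{[m]}$, and $\bm{l} \in \mathbb{Z}_{\geq 0}^{\mathbb{N}}$. The key observation is that the degree formula recalled just before the statement,
\[ \mathsf{deg}_{m}(\pi u^{\bm{d}} c^{\bm{l}}) = m + \sum_{i \in [m]} \bm{d}(i) + \sum_{i \in \mathbb{N}} (i+1)\bm{l}(i), \]
is additive in the three independent pieces of data $\pi$, $\bm{d}$, $\bm{l}$, and in fact does not depend on $\pi$ at all. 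This means that the generating function $\sum_{n} |\Lambda_{n}(m)| t^{n} = \sum_{\lambda \in \Lambda(m)} t^{\mathsf{deg}_{m}(\lambda)}$ factors as a product over these pieces.

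First I would sum over $\pi$: since $\pi$ ranges over the $m!$ elements of $\mathfrak{S}_{m}$ and contributes nothing to the degree, this produces a constant factor $m!$, together with the overall factor $t^{m}$ coming from the $+m$ in the degree. Next, for each fixed $i \in [m]$ the exponent $\bm{d}(i)$ runs freely over $\mathbb{Z}_{\geq 0}$ and contributes $\sum_{k \geq 0} t^{k} = (1-t)^{-1}$; taking the product over the $m$ coordinates gives $(1-t)^{-m}$. Finally, for each $i \in \mathbb{N}$ the exponent $\bm{l}(i)$ runs over $\mathbb{Z}_{\geq 0}$ and contributes $\sum_{k \geq 0} t^{(i+1)k} = (1-t^{i+1})^{-1}$; taking the product over $i \geq 1$ and reindexing $n = i+1$ gives $\prod_{n=2}^{\infty} (1-t^{n})^{-1}$. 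Multiplying these three contributions yields \Cref{ProperXMCSGenFunc}. Here one should note that the product over $\bm{l}$ is a legitimate element of $\mathbb{Z}[[t]]$ precisely because each $\bm{l}$ has finite support, so every monomial $c^{\bm{l}}$ involves only finitely many $c_{i}$ and each coefficient of the power series is a finite sum.

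For the second identity I would use the disjoint union $\Lambda_{\leq n}(m) = \bigsqcup_{m \leq i \leq n} \Lambda_{i}(m)$, so that $|\Lambda_{\leq n}(m)| = \sum_{i=0}^{n} |\Lambda_{i}(m)|$ (the terms with $i < m$ vanish, since $\Lambda_{i}(m)$ is empty there). The generating function of a sequence of partial sums is obtained by multiplying the original generating function by $(1-t)^{-1}$, so the left-hand side of \Cref{XMCSGenFunc} equals $(1-t)^{-1}$ times the right-hand side of \Cref{ProperXMCSGenFunc}. The last step is simply to absorb this extra $(1-t)^{-1} = (1-t^{1})^{-1}$ into the Euler product, turning $\prod_{n=2}^{\infty} (1-t^{n})^{-1}$ into $\prod_{n=1}^{\infty} (1-t^{n})^{-1}$, which is exactly the claimed form. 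There is no serious obstacle here; the argument is a routine application of the product rule for generating functions once the monoid parametrization is in hand. The only points requiring care are bookkeeping ones: confirming that the degree is additive and $\pi$-independent so that the sum genuinely factors, keeping track of the index shift $i \mapsto i+1$ that places the $\mathcal{C}$-factors in the range $n \geq 2$, and the finite-support justification for the infinite product.
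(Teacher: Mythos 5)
Your argument is correct and is essentially the paper's own proof: both rest on the monoid parametrization $\lambda = \pi u^{\bm{d}}c^{\bm{l}}$ and the additivity (and $\pi$-independence) of $\mathsf{deg}_{m}$, yielding the factors $m!t^{m}$, $(1-t)^{-m}$, and $\prod_{n\geq 2}(1-t^{n})^{-1}$, followed by the same partial-sum trick for the second identity. The only cosmetic difference is that the paper routes the computation through an explicit convolution of two intermediate generating functions (for the $\pi u^{\bm{d}}$ part and the $c^{\bm{l}}$ part), whereas you factor the weight generating function directly.
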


\begin{proof}
We begin by showing \emph{\Cref{ProperXMCSGenFunc}}. For any $\pi u^{\bm{d}}c^{\bm{l}}\in\Lambda(m)$ we have that
\[ \mathsf{deg}(\pi u^{\bm{d}}c^{\bm{l}})= m+\sum_{i\in [m]}\bm{d}(i)+\sum_{i\in\mathbb{N}}(i+1)\bm{l}(i). \]
For any $x\in\mathbb{Z}_{\geq 0}$ define the subsets of $\Lambda(m)$ given by
\[ \mathfrak{S}\mathcal{U}^{(x)}:=\left\{ \pi u^{\bm{d}} \ \Bigg| \ \pi\in\mathfrak{S}_{m}, \ \bm{d}\in\mathbb{Z}_{\geq 0}^{[m]}, \ \sum_{i\in [m]}\bm{d}(i)=x\right\} \hspace{2mm}
\text{ and } \hspace{2mm} \mathcal{C}^{(x)}:=\left\{ c^{\bm{l}} \ \Bigg| \ \bm{l}\in\mathbb{Z}_{\geq 0}^{\mathbb{N}}, \ \sum_{i\in\mathbb{N}}(i+1)\bm{l}(i)=x\right\}. \] 
For any $a,b\in\mathbb{Z}_{\geq 0}$, let $\pi u^{\bm{d}}\in\mathfrak{S}\mathcal{U}^{(a)}$ and $c^{\bm{l}}\in\mathcal{C}^{(b)}$, then we have $\mathsf{deg}_{m}(\pi u^{\bm{d}}c^{\bm{l}})=a+b+m$, and all elements in $\Lambda(m)$ of degree $a+b+m$ appear uniquely in such a manner. Hence for any $n\in\mathbb{Z}_{\geq 0}$ we have that
\[ |\Lambda_{n+m}(m)| = \sum\limits_{\substack{a,b\geq 0 \\ a+b=n}}|\mathfrak{S}\mathcal{U}^{(a)}||\mathcal{C}^{(b)}|. \]
Therefore, if $F(t)$ is the generating function such that $[t^{n}]F(t)=|\mathfrak{S}\mathcal{U}^{(n)}|$, and $G(t)$ the generating function such that $[t^{n}]G(t)=|\mathcal{C}^{(n)}|$, then
\begin{equation} \label{CoeffProd}
[t^{n}](t^{m}F(t)G(t))=|\Lambda_{n}(m)|.
\end{equation}
The elements of $\mathcal{C}^{(n)}$ are the $m$-marked cycle shapes where the elements of $[m]$ belong to cycles of length one, and the remaining cycles contain $n$ symbols $*$ in total. The cycles containing only the symbols $*$ must be of length at least two. Thus one can deduce that
\[ G(t)=\sum_{n=0}^{\infty}|\mathcal{C}^{(n)}|t^{n} = \prod_{n=2}^{\infty}\frac{1}{1-t^n}, \]
since the factor $(1-t^{n})^{-1}$ accounts for the number of cycles of length $n$ containing only the symbols $*$ present in such an $m$-marked cycle shape. As for the set $\mathfrak{S}\mathcal{U}^{(n)}$, it is clear that its cardinality is $m!$ times the number of maps $\bm{d}\in\mathbb{Z}_{\geq 0}^{[m]}$ such that the sum of $\bm{d}(i)$ for each $i\in [m]$ is precisely $n$. The number of such maps is precisely the coefficient of $t^{n}$ in the generating function $(1-t)^{-m}$, since each factor $(1-t)^{-1}$ accounts for the choice of the image of a given element of $[m]$. As such
\[ F(t)=\sum_{n=0}^{\infty}|\mathfrak{S}\mathcal{U}^{(n)}|t^{n}=m!\left(\frac{1}{1-t}\right)^{m}. \]
Hence from \emph{\Cref{CoeffProd}} we have that
\[ \sum_{n=0}^{\infty}|\Lambda_{n}(m)|t^{n}= t^{m}m!\left(\frac{1}{1-t}\right)^{m}\prod_{n=2}^{\infty}\frac{1}{1-t^n}, \]
which is precisely \emph{\Cref{ProperXMCSGenFunc}}. For \emph{\Cref{XMCSGenFunc}}, this follows from \emph{\Cref{ProperXMCSGenFunc}} by noting that for any generating function $A(t)$, the new generating function $(1-t)^{-1}A(t)$ records the partial sums of the coefficients of $A(t)$, that is $[t^{n}]\left((1-t)^{-1}A(t)\right)$ equals the sum of $[t^{i}]A(t)$ as $i$ runs from $0$ to $n$.

\end{proof}

Let $\mathcal{P}_{n}$ is the set of all partitions of $n$. Then the above proposition allows us to give a formula for the size of $\Lambda_{\leq n}(m)$ in terms of the sizes of the sets $\mathcal{P}_{i}$ for $i\leq n$.  

\begin{cor} \label{SizeFormulaXMCS}
The cardinality of $\Lambda_{\leq n}(m)$ is given by
\[ |\Lambda_{\leq n}(m)|=\sum\limits_{\substack{a\geq m, b\geq 0 \\ a+b=n}}m!\binom{a-1}{a-m}|\mathcal{P}_{b}|. \]
\end{cor}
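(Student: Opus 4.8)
The plan is to read off the claimed formula directly from the generating function identity \Cref{XMCSGenFunc} established in \Cref{XMCSGenFuncProp}, by factoring that generating function into two pieces whose coefficients are individually recognisable and then invoking the Cauchy product.

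First I would split the right-hand side of \Cref{XMCSGenFunc} as the product of $A(t):=\frac{m!t^{m}}{(1-t)^{m}}$ and $B(t):=\prod_{n=1}^{\infty}\frac{1}{1-t^{n}}$. The factor $B(t)$ is Euler's classical generating function for integer partitions: each factor $(1-t^{n})^{-1}=\sum_{k\geq 0}t^{nk}$ records the multiplicity of the part $n$, so that $[t^{b}]B(t)=|\mathcal{P}_{b}|$, with the convention $|\mathcal{P}_{0}|=1$ counting the empty partition. For the factor $A(t)$ I would use the negative binomial expansion $\frac{1}{(1-t)^{m}}=\sum_{k\geq 0}\binom{m+k-1}{m-1}t^{k}$; multiplying by $m!t^{m}$ and setting $a=m+k$ gives $[t^{a}]A(t)=m!\binom{a-1}{m-1}=m!\binom{a-1}{a-m}$ for $a\geq m$, and $0$ for $a<m$.

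Finally I would apply the Cauchy product: the coefficient of $t^{n}$ in $A(t)B(t)$ equals $\sum_{a+b=n}\bigl([t^{a}]A(t)\bigr)\bigl([t^{b}]B(t)\bigr)$. Since $[t^{a}]A(t)$ vanishes for $a<m$, the sum effectively ranges over $a\geq m$ and $b\geq 0$ with $a+b=n$, yielding exactly $\sum_{a+b=n,\,a\geq m,\,b\geq 0}m!\binom{a-1}{a-m}|\mathcal{P}_{b}|$, which by \Cref{XMCSGenFunc} is the asserted value of $|\Lambda_{\leq n}(m)|$.

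There is no genuine obstacle here; the statement is a routine coefficient extraction from an already-established generating function. The only points requiring a little care are recognising $\prod_{n\geq 1}(1-t^{n})^{-1}$ as the partition generating function and correctly rewriting the negative binomial coefficient $\binom{m+k-1}{m-1}$ in the form $\binom{a-1}{a-m}$ after the index shift $a=m+k$; the vanishing of $[t^{a}]A(t)$ for $a<m$ is precisely what enforces the constraint $a\geq m$ in the stated sum.
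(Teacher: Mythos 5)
Your proposal is correct and follows essentially the same route as the paper: both extract the coefficient of $t^{n}$ from \Cref{XMCSGenFunc} by recognising $\prod_{n\geq1}(1-t^{n})^{-1}$ as the partition generating function, expanding $(1-t)^{-m}$ via the negative binomial series, shifting the index to get $m!\binom{a-1}{a-m}$, and taking the Cauchy product. The only cosmetic difference is the paper's explicit remark that generalised binomial coefficients are needed when $m=0$, a caveat your computation implicitly handles in the same way.
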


\begin{proof}
The following two identities are well-known:
\[ \sum_{n=0}^{\infty}|\mathcal{P}_{n}|t^{n}= \prod_{n=1}^{\infty}\frac{1}{1-t^{n}} \hspace{6mm} \text{ and } \hspace{6mm} \left(\frac{1}{1-t}\right)^{m} = \sum_{n=0}^{\infty}\binom{m+n-1}{n}t^{n}. \]
The latter implies 
\[ \left(\frac{t}{1-t}\right)^{m} = \sum_{n=0}^{\infty}\binom{m+n-1}{n}t^{n+m} = \sum_{n=m}^{\infty}\binom{n-1}{n-m}t^{n}. \]
Then the result follows from \emph{\Cref{XMCSGenFunc}} of \emph{Proposition \ref{XMCSGenFuncProp}}. Note we are using the generalised binomial coefficients here when $m=0$.

\end{proof}

Lastly, with the identification $\Lambda(m)=\left(\mathfrak{S}_{m}\ltimes_{\varphi}\mathcal{U}_{m}\right)\times\mathcal{C}$, we have that
\[ \mathsf{FH}_{m}=\mathsf{Span}_{R}\{K(\pi u^{\bm{d}}c^{\bm{l}}) \ | \ \pi\in\mathfrak{S}_{m}, \bm{d}\in\mathbb{Z}_{\geq 0}^{[m]}, \bm{l}\in\mathbb{Z}_{\geq 0}^{\mathbb{N}}\}. \]
Note that the $R$-subalgebra generated by the elements $K(c^{\bm{l}})$ for all $\bm{l}\in\mathbb{Z}_{\geq 0}^{\mathbb{N}}$ is precisely the subspace $\mathsf{FH}_{m}^{*}$ described in the proof of \emph{\Cref{Lem:FH0SubAlgebra}} consisting of all basis elements indexed by $m$-marked cycles shapes with the elements of $[m]$ appearing in trivial cycles. As such $\mathsf{FH}_{0}\cong\mathsf{Span}_{R}\{K(c^{\bm{l}}) \ | \ \bm{l}\in\mathbb{Z}_{\geq 0}^{\mathbb{N}}\}\subset\mathsf{FH}_{m}$.


\section{Leading Terms via Monoid Product}

We may extend the degree function $\mathsf{deg}_{m}:\Lambda(m)\rightarrow\mathbb{Z}_{\geq m}$ to one acting on $\mathsf{FH}_{m}$ by setting
\[ \mathsf{deg}\left(\sum_{\lambda\in\Lambda(m)}f_{\lambda}(t)K(\lambda)\right)=\mathsf{max}\{\mathsf{deg}_{m}(\lambda) \ | \ f_{\lambda}(t)\neq 0\}. \]
In particular $\mathsf{deg}_{m}(K(\lambda))=\mathsf{deg}_{m}(\lambda)$. We seek to show that the product of two basis elements of $\mathsf{FH}_{m}$ admits a unquie leading term with regards to this degree function. To prove this we will use the following lemma.

\begin{lem} \label{Lem:mSupTrivialIntersection}
For $\lambda,\mu\in\Lambda(m)$, let $g\in\mathsf{CL}_{\mathbb{N},m}(\lambda)$ and $h\in\mathsf{CL}_{\mathbb{N},m}(\mu)$. The equality
\begin{equation} \label{TrivialmSupIntersection}
\mathsf{Sup}^{m}(g)\cap\mathsf{Sup}^{m}(h)=\emptyset
\end{equation}
holds if and only if $gh\in\mathsf{CL}_{\mathbb{N},m}(\lambda\mu)$.
\end{lem}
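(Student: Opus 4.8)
The plan is to work throughout with the monoid description, writing $\lambda=\pi u^{\bm{d}}c^{\bm{l}}$ and $\mu=\sigma u^{\bm{e}}c^{\bm{k}}$, so that $\lambda\mu=\pi\sigma\, u^{\sigma\circ\bm{d}+\bm{e}}c^{\bm{l}+\bm{k}}$ by \Cref{mMarkedCycleShapeProduct}, and to test membership in $m$-classes using the criteria of \Cref{Lem:mClassCriteria}. The single structural fact driving both directions is that \Cref{TrivialmSupIntersection} holds exactly when every point of $\mathsf{Sup}^{m}(g)$ is fixed by $h$ and every point of $\mathsf{Sup}^{m}(h)$ is fixed by $g$: such points lie outside $[m]$ and outside the other permutation's support, so they are fixed by it. Thus $\mathbb{N}$ splits into $[m]$, the set $\mathsf{Sup}^{m}(g)$ (on which $gh$ acts pointwise as $g$), the set $\mathsf{Sup}^{m}(h)$, and the common fixed points.

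For the forward implication I would assume \Cref{TrivialmSupIntersection} and verify the two conditions of \Cref{Lem:mClassCriteria} for $gh$ against $\lambda\mu$. For condition (ii), fix $i\in[m]$ and trace the orbit of $i$ under repeated application of $gh$. Because $g$ fixes $\mathsf{Sup}^{m}(h)$ pointwise, the first applications of $gh$ reproduce the stars that $h$ inserts after $i$, carrying $i$ through $h(i),\dots,h^{\bm{e}(i)}(i)\in\mathsf{Sup}^{m}(h)$ and then to $\sigma(i)\in[m]$; because $h$ fixes $\mathsf{Sup}^{m}(g)$ pointwise, the next applications run through the stars that $g$ inserts after $\sigma(i)$ before arriving at $\pi(\sigma(i))=(\pi\sigma)(i)$. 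Counting shows that the number of intermediate points outside $[m]$ between $i$ and $(\pi\sigma)(i)$ agrees with the exponent that $\lambda\mu$ attaches to $i$, which is condition (ii). For condition (i), I would show that a star lying in a cycle of $g$ or of $h$ meeting $[m]$ is absorbed into a cycle of $gh$ meeting $[m]$ (by the same trajectory computation), while on each pure-star cycle of $g$ the map $gh$ restricts to $g$ and on each pure-star cycle of $h$ it restricts to $h$. Hence the nontrivial cycles of $gh$ disjoint from $[m]$ are exactly the pure-star cycles of $g$ together with those of $h$, giving the count $(\bm{l}+\bm{k})(i)$ of length-$(i+1)$ cycles.

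For the converse I would argue by a degree count, which avoids all trajectory bookkeeping. Since $\mathsf{deg}_{m}$ is a monoid homomorphism we have $||\lambda\mu||^{m}=||\lambda||^{m}+||\mu||^{m}$. If $gh\in\mathsf{CL}_{\mathbb{N},m}(\lambda\mu)$ then $||gh||^{m}=||\lambda\mu||^{m}=||\lambda||^{m}+||\mu||^{m}=||g||^{m}+||h||^{m}$. On the other hand $\mathsf{Sup}(gh)\subseteq\mathsf{Sup}(g)\cup\mathsf{Sup}(h)$ always, so intersecting with $\mathbb{N}\backslash[m]$ and applying inclusion--exclusion gives
\[ ||gh||^{m}\leq ||g||^{m}+||h||^{m}-|\mathsf{Sup}^{m}(g)\cap\mathsf{Sup}^{m}(h)|. \]
Comparing the two displays forces $|\mathsf{Sup}^{m}(g)\cap\mathsf{Sup}^{m}(h)|=0$, which is \Cref{TrivialmSupIntersection}.

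The main obstacle is the forward direction, specifically the cycle bookkeeping: one must confirm not only that the elements of $[m]$ reappear in $gh$ in the order prescribed by $\pi\sigma$, but that the stars contributed by $g$ and by $h$ interleave in exactly the pattern recorded by $\sigma\circ\bm{d}+\bm{e}$, and that no star from a cycle meeting $[m]$ leaks into a pure-star cycle (nor conversely). Getting the order of the interleaving right -- the stars $h$ inserts after $i$ preceding those $g$ inserts after $\sigma(i)$ -- is where care with the composition convention is essential.
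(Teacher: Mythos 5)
Your proposal is correct and follows essentially the same route as the paper: the converse is the identical degree count using that $\mathsf{deg}_m$ is a monoid homomorphism together with $\mathsf{Sup}^m(gh)\subseteq\mathsf{Sup}^m(g)\cup\mathsf{Sup}^m(h)$, and the forward direction verifies conditions (i) and (ii) of \Cref{Lem:mClassCriteria} for $gh$ against $\pi\sigma u^{\sigma\circ\bm{d}+\bm{e}}c^{\bm{l}+\bm{k}}$ by exactly the trajectory argument the paper uses (with the same composition convention, $h$'s stars preceding $g$'s, and the identification $\bm{d}(\sigma^{-1}(x))=(\sigma\circ\bm{d})(x)$). Your treatment of condition (i) is in fact slightly more careful than the paper's, which simply asserts the cycle count.
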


\begin{proof}
Suppose that $gh\in\mathsf{CL}_{\mathbb{N},m}(\lambda\mu)$, then since $\mathsf{deg}_{m}$ is a monoid homomorphism
\[ |\mathsf{Sup}^{m}(gh)|=||\lambda\mu||^{m}=||\lambda||^{m}+||\mu||^{m}=|\mathsf{Sup}^{m}(g)|+|\mathsf{Sup}^{m}(h)|. \]
Since also $\mathsf{Sup}^{m}(gh)\subset\mathsf{Sup}^{m}(g)\cup\mathsf{Sup}^{m}(h)$, we have $\mathsf{Sup}^{m}(g)\cap\mathsf{Sup}^{m}(h)=\emptyset$. This proves the ``if'' statement. For the ``only if'' let $\lambda=\pi u^{\bm{d}}c^{\bm{l}}$ and $\mu=\sigma u^{\bm{e}}c^{\bm{k}}$ for $\pi,\sigma\in\mathfrak{S}_{m}$, $\bm{d},\bm{e}\in\mathbb{Z}_{\geq 0}^{[m]}$, and $\bm{l},\bm{k}\in\mathbb{Z}_{\geq 0}^{\mathbb{N}}$. By \emph{\Cref{mMarkedCycleShapeProduct}},
\[ \lambda\mu = \pi\sigma u^{\sigma\circ\bm{d}+\bm{e}}c^{\bm{l}+\bm{k}}. \]
Hence the result follows if we show that $gh$ satisfies items $(i)$ and $(ii)$ of \emph{\Cref{Lem:mClassCriteria}} with respect to $\pi\sigma u^{\sigma\circ\bm{d}+\bm{e}}c^{\bm{l}+\bm{k}}$. For item $(i)$, since \emph{\Cref{TrivialmSupIntersection}} is upheld, it is clear that the number of cycles of $gh$ of length $i+1$ which contain no elements of $[m]$ is $\bm{l}(i)+\bm{k}(i)$, since this is the sum of such cycles of $g$ and $h$. For item $(ii)$, pick any $x\in [m]$ and set $y:=\sigma(x)$ and $z:=\pi(y)$. Since $h\in\mathsf{CL}_{\mathbb{N},m}(\sigma u^{\bm{e}}c^{\bm{k}})$, item $(ii)$ of \emph{\Cref{Lem:mClassCriteria}} tells us that $h:x\mapsto i_{1}\mapsto i_{2}\mapsto \cdots \mapsto i_{\bm{e}(x)}\mapsto y$ where $\{i_{1},i_{2},\dots,i_{\bm{e}(x)}\}\cap [m]=\emptyset$. Similarly since $g\in\mathsf{CL}_{\mathbb{N},m}(\pi u^{\bm{d}}c^{\bm{l}})$, $g:y\mapsto j_{1}\mapsto j_{2}\mapsto \cdots \mapsto j_{\bm{d}(y)}\mapsto z$ where $\{j_{1},j_{2},\dots,j_{\bm{d}(y)}\}\cap [m]=\emptyset$. Since \emph{\Cref{TrivialmSupIntersection}} is upheld, we must have that
\[ gh:x\mapsto i_{1}\mapsto \cdots \mapsto i_{\bm{e}(x)}\mapsto j_{1}\mapsto \cdots j_{\bm{d}(y)}\mapsto z. \]
Thus $(gh)^{\bm{d}(y)+\bm{e}(x)+1}(x)=(\pi\sigma)(x)$ and $(gh)^{n}(x)\not\in [m]$ for any $1\leq n\leq \bm{d}(y)+\bm{e}(x)$. One may also note that $\bm{d}(y)=\bm{d}(\sigma^{-1}(x))=(\sigma\circ\bm{d})(x)$, and hence $gh$ also upholds item $(ii)$ of \emph{\Cref{Lem:mClassCriteria}}.

\end{proof}

\begin{prop} \label{Prop:LeadingTerm}
Let $\lambda=\pi u^{\bm{d}}c^{\bm{l}}, \mu=\sigma u^{\bm{e}}c^{\bm{k}}\in\Lambda(m)$. In $\mathsf{FH}_{m}$ we have that
\[ K(\lambda)K(\mu)=c_{\lambda,\mu}K(\lambda\mu)+\sum\limits_{\substack{\nu\in\Lambda(m) \\ \mathsf{deg}_{m}(\nu)<\mathsf{deg}_{m}(\lambda\mu)}}f_{\lambda,\mu}^{\nu}(t)K(\nu) \]
where $c_{\lambda,\mu}\in\mathbb{N}$ is a constant given by
\[ c_{\lambda,\mu}=\prod_{i=1}^{\infty}\binom{(\bm{l}+\bm{k})(i)}{\bm{l}(i)}, \]
\end{prop}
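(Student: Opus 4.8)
The plan is to evaluate the structure polynomial $f_{\lambda,\mu}^{\nu}(t)$ by working inside a single centraliser algebra $Z_{n,m}$ for large $n$, using \Cref{Lem:FHEqualityByProjection} to transfer the result back to $\mathsf{FH}_{m}$. Recall that $f_{\lambda,\mu}^{\nu}(n)$ is the multiplicity of $K_{n}(\nu)$ in $K_{n}(\lambda)K_{n}(\mu)=\sum_{g,h}gh$, where $g$ ranges over $\mathsf{CL}_{n,m}(\lambda)$ and $h$ over $\mathsf{CL}_{n,m}(\mu)$. I would split the argument into two parts: first that $\lambda\mu$ is the unique $m$-marked cycle shape of maximal degree appearing, and second the computation of its coefficient.

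For the degree statement, suppose $\nu$ occurs, so there exist $g\in\mathsf{CL}_{\mathbb{N},m}(\lambda)$ and $h\in\mathsf{CL}_{\mathbb{N},m}(\mu)$ with $gh\in\mathsf{CL}_{\mathbb{N},m}(\nu)$. Since $\mathsf{Sup}^{m}(gh)\subseteq\mathsf{Sup}^{m}(g)\cup\mathsf{Sup}^{m}(h)$, we obtain
\[ ||\nu||^{m}=||gh||^{m}\leq |\mathsf{Sup}^{m}(g)|+|\mathsf{Sup}^{m}(h)|=||\lambda||^{m}+||\mu||^{m}=||\lambda\mu||^{m}, \]
the last equality because $\mathsf{deg}_{m}$ is a monoid homomorphism. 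Hence $\mathsf{deg}_{m}(\nu)\leq\mathsf{deg}_{m}(\lambda\mu)$, which accounts for every term being collected in the displayed sum. Moreover equality forces both inequalities above to be equalities, in particular $\mathsf{Sup}^{m}(g)\cap\mathsf{Sup}^{m}(h)=\emptyset$, whence \Cref{Lem:mSupTrivialIntersection} gives $gh\in\mathsf{CL}_{\mathbb{N},m}(\lambda\mu)$, i.e. $\nu=\lambda\mu$. This isolates $K(\lambda\mu)$ as the unique top-degree term.

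For the coefficient, fix a representative $\rho\in\mathsf{CL}_{n,m}(\lambda\mu)$; by $\mathsf{Stab}_{n}(m)$-conjugation invariance the number of factorisations $\rho=gh$ with $g\in\mathsf{CL}_{n,m}(\lambda)$ and $h\in\mathsf{CL}_{n,m}(\mu)$ is the same for every $\rho$ in the class and equals $f_{\lambda,\mu}^{\lambda\mu}(n)$. I would show this count is the stated binomial product via a canonical decomposition of such a factorisation, using that $\mathsf{Sup}^{m}(g)\cap\mathsf{Sup}^{m}(h)=\emptyset$ (again by \Cref{Lem:mSupTrivialIntersection}). The cycles of $\rho$ meeting $[m]$ pin down $g$ and $h$ on their supports uniquely: tracing the cycle through $x\in[m]$, the first $\bm{e}(x)$ non-$[m]$ steps must be the action of $h$ and the subsequent steps the action of $g$, exactly as in the proof of \Cref{Lem:mSupTrivialIntersection}, so there is no freedom in the marked part. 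By contrast, each pure $*$-cycle of $\rho$ must lie \emph{entirely} inside $g$ or entirely inside $h$: if some element of such a cycle is moved by $g$ then $h$ fixes it, so $\rho$ agrees with $g$ there and, following $\rho$ around, the whole cycle is a $g$-cycle, and symmetrically for $h$. Since $g\in\mathsf{CL}_{n,m}(\lambda)$ must contain exactly $\bm{l}(i)$ pure cycles of length $i+1$ among the $(\bm{l}+\bm{k})(i)$ that $\rho$ possesses (\Cref{Lem:mClassCriteria}(i)), the factorisations are in bijection with the ways of selecting which pure cycles belong to $g$, giving $\prod_{i}\binom{(\bm{l}+\bm{k})(i)}{\bm{l}(i)}=c_{\lambda,\mu}$. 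As this is independent of $n$, the polynomial $f_{\lambda,\mu}^{\lambda\mu}(t)$ is the constant $c_{\lambda,\mu}\in\mathbb{N}$.

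The main obstacle is the combinatorial heart of part two: establishing that a factorisation cannot ``mix'' a pure cycle between $g$ and $h$, together with the uniqueness of the marked part, so that the set of factorisations biject cleanly with distributions of the pure cycles. I would verify the bijection in both directions, checking that each choice of distribution produces a genuine pair $(g,h)$ lying in the correct classes with $gh=\rho$ (the marked reconstruction plus the assigned pure cycles), and conversely that every factorisation arises this way, the disjointness of $m$-supports being the tool that makes both checks go through.
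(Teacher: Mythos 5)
Your proposal follows essentially the same route as the paper: the degree bound via $\mathsf{Sup}^{m}(gh)\subseteq\mathsf{Sup}^{m}(g)\cup\mathsf{Sup}^{m}(h)$, identification of the top term through \Cref{Lem:mSupTrivialIntersection}, and the coefficient computed by counting factorisations of a fixed $\omega\in\mathsf{CL}_{\mathbb{N},m}(\lambda\mu)$ in which the marked cycles are forced and only the pure $*$-cycles distribute freely between $g$ and $h$. Your explicit argument that a pure cycle of $\omega$ cannot be split between $g$ and $h$ is a point the paper leaves implicit, but the proof is otherwise the same.
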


\begin{proof}
Let $g\in\mathsf{CL}_{\mathbb{N},m}(\lambda), h\in\mathsf{CL}_{\mathbb{N},m}(\mu)$, then $\mathsf{Sup}^{m}(gh)\subset\mathsf{Sup}^{m}(g)\cup\mathsf{Sup}^{m}(h)$. Thus if $gh\in\mathsf{CL}_{\mathbb{N},m}(\nu)$ for $\nu\in\Lambda(m)$ then $||\nu||^{m}\leq ||\lambda||^{m}+||\mu||^{m}$. Hence $\mathsf{deg}_{m}(\nu)\leq \mathsf{deg}_{m}(\lambda)+_{m}\mathsf{deg}_{m}(\mu)=\mathsf{deg}_{m}(\lambda\mu)$. Thus we have 
\[ K(\lambda)K(\mu)=\sum\limits_{\substack{\nu\in\Lambda(m) \\ \mathsf{deg}_{m}(\nu)\leq\mathsf{deg}_{m}(\lambda\mu)}}f_{\lambda,\mu}^{\nu}(t)K(\nu). \] 
Suppose $gh\in\mathsf{CL}_{\mathbb{N},m}(\nu)$ with $\mathsf{deg}_{m}(\nu)=\mathsf{deg}_{m}(\lambda\mu)$. This implies that $||\nu||^{m}=||\lambda\mu||^{m}$ and hence we must have $\mathsf{Sup}^{m}(g)\cap\mathsf{Sup}^{m}(h)=\emptyset$. Thus by \emph{\Cref{Lem:mSupTrivialIntersection}} we have that $\nu=\lambda\mu$. Therefore
\[ K(\lambda)K(\mu)=f_{\lambda,\mu}^{\lambda\mu}(t)K(\lambda\mu)+\sum\limits_{\substack{\nu\in\Lambda(m) \\ \mathsf{deg}_{m}(\nu)<\mathsf{deg}_{m}(\lambda\mu)}}f_{\lambda,\mu}^{\nu}(t)K(\nu). \]
It remains to show $f_{\lambda,\mu}^{\lambda\mu}(t)=c_{\lambda,\mu}$. Let $\omega\in\mathsf{CL}_{\mathbb{N},m}(\lambda\mu)$ and $A_{\lambda,\mu}(\omega):=\{(g,h)\in\mathsf{CL}_{\mathbb{N},m}(\lambda)\times\mathsf{CL}_{\mathbb{N},m}(\mu) \ | \ gh=\omega\}$. By \emph{\Cref{mMarkedCycleShapeProduct}} we have $\lambda\mu = \pi\sigma u^{\sigma\circ\bm{d}+\bm{e}}c^{\bm{l}+\bm{k}}$, and so by \emph{\Cref{Lem:mClassCriteria}}, for any $x\in [m]$, we have that $\omega:x\mapsto i_{1}\mapsto \cdots \mapsto i_{(\sigma\circ\bm{d}+\bm{e})(x)}\mapsto (\pi\sigma)(x)$, where $\{i_{1},\cdots,i_{(\sigma\circ\bm{d}+\bm{e})(x)}\}\cap [m]=\emptyset$. Any pair $(g,h)\in A_{\lambda,\mu}(\omega)$ satisfies \emph{\Cref{TrivialmSupIntersection}}, and since their product gives $\omega$ we must have that
\begin{align*}
h&:x\mapsto i_{1}\mapsto \cdots \mapsto i_{\bm{e}(x)}\mapsto \sigma(x), \\
g&:\sigma(x)\mapsto i_{\bm{e}(x)+1}\mapsto \cdots \mapsto i_{(\sigma\circ\bm{d}+\bm{e})(x)}\mapsto (\pi\sigma)(x).
\end{align*}
So if we construct a pair $(g,h)\in\mathsf{CL}_{\mathbb{N},m}(\lambda)\times\mathsf{CL}_{\mathbb{N},m}(\mu)$ such that $gh=\omega$, the cycles containing elements of $[m]$ in $g$ and $h$ are predetermined by $\omega$. Hence we are just concerned with the cycles which contain no elements of $[m]$. In $\omega$ there are $(\bm{l}+\bm{k})(i)$ number of such cycles of length $i+1$, while $g$ and $h$ containg $\bm{l}(i)$ and $\bm{k}(i)$ such cycles respectively. Thus to construct a pair $(g,h)\in A_{\lambda,\mu}(\omega)$, it is simply a matter of how one distributes the cycles containing no elements of $[m]$ of $\omega$ among either $g$ or $h$. The binomial coefficient
\[ \binom{(\bm{l}+\bm{k})(i)}{\bm{l}(i)} \]
counts the number of ways to allocate such cycles of length $i+1$ of $\omega$ to the permutation $g$ (with the remaining cycles allocated to $h$). Therefore
\[ f_{\lambda,\mu}^{\lambda\mu}(t)=|A_{\lambda,\mu}(\omega)|= \prod_{i=1}^{\infty}\binom{(\bm{l}+\bm{k})(i)}{\bm{l}(i)}, \]
where the product is only formally infinite since the functions $(\bm{l}+\bm{k})$, $\bm{l}$, and $\bm{k}$ have finite support.

\end{proof}

Thus the leading term in the product $K(\lambda)K(\mu)$ is $c_{\lambda,\mu}K(\lambda\mu)$. Thus the monoid product of $\Lambda(m)$ is governing the leading terms in $\mathsf{FH}_{m}$. From the formula given above one can deduce that $c_{\lambda,\mu}=1$ if and only if $\lambda$ and $\mu$ share no cycles of the same length consisting of only the symbols $*$, i.e. whenever $\bm{l}(i)\neq 0$ then $\bm{k}(i)=0$. If one was to extend $R$ to a ring which contains the inverses to any natural number, say its field of fractions $\mathbb{Q}(t)$, then the above leading term result could by applied to easily deduce generating sets for the $\mathbb{Q}(t)$-algebra $\mathbb{Q}(t)\otimes_{R}\mathsf{FH}_{m}$. For example, let $s_{i}=(i,i+1)$ for any $i\in[m]$ be the simple transposition in $\mathfrak{S}_{m}$, then by arguing by induction on the degree, \emph{\Cref{Prop:LeadingTerm}} can be employed to show that the set
\begin{equation} \label{GeneratingSetOverQ(t)}
\{K(s_{i}),K(u_{i}),K(c_{j}) \ | \ i\in[m], j\in\mathbb{N}\}
\end{equation}
generates $\mathbb{Q}(t)\otimes_{R}\mathsf{FH}_{m}$ as a $\mathbb{Q}(t)$-algebra since $\{s_{i},u_{i},c_{j} \ | \ i\in[m], j\in\mathbb{N}\}$ generates $\Lambda(m)=\left(\mathfrak{S}_{m}\ltimes_{\varphi}\mathcal{U}_{m}\right)\times\mathcal{C}$.


\section{Symmetric Functions}

Consider the polynomial $\mathbb{Z}$-algebra $\mathbb{Z}[x_{1},\dots,x_{n}]$ in $n$ commuting variables. The group $\mathfrak{S}_{n}$ acts on this algebra by permuting the variables. A polynomial is \emph{symmetric} if it is $\mathfrak{S}_{n}$-invariant. Denote the $\mathbb{Z}$-subalgebra of symmetric polynomials by $\mathsf{Sym}_{n}$. We are interested in two types of symmetric polynomials, and to describe one type it will be helpful to set up some notation. A tuple $\alpha=(a_{1},a_{2},\dots,a_{l})$ of positive integers is a partition whenever $a_{1}\geq a_{2}\geq \cdots \geq a_{l}$. We say $\alpha$ has length $l$ and size $a_{1}+\cdots+a_{l}$. Let $\mathcal{P}(k,l)$ be the set of partitions of size $k$ and length $l$. When $k=l=0$ let $\mathcal{P}(0,0)=\{\emptyset\}$.

\begin{defn}
For any $k\geq 0$, the \emph{elementary symmetric polynomials} are given by
\[ e_{k}(x_{1},\dots,x_{n}):=\sum_{i_{1}<i_{2}<\cdots<i_{k}}x_{i_{1}}x_{i_{2}}\cdots x_{i_{k}} \]
Hence $e_{k}$ is the sum of all monomials with $k$ variables. In particular $e_{0}=1$ and $e_{k}=0$ for $k>n$.
\end{defn}

\begin{defn}
For $\alpha=(a_{1},a_{2},\dots,a_{l})\in\mathcal{P}(k,l)$, the \emph{monomial symmetric polynomials} are given by
\[ m_{\alpha}(x_{1},\dots,x_{n}):=\sum_{(i_{1},i_{2},\dots,i_{l})\in [n]^{!l}}x_{i_{1}}^{a_{1}}x_{i_{2}}^{a_{2}}\cdots x_{i_{l}}^{a_{l}} \]
Recall that $[n]^{!l}$ is the subset of the $l$-fold cartesian product of $[n]$ consisting of tuples with pairwise distinct entries. Hence $m_{\alpha}$ is the sum of all monomials whose exponents match the partition $\alpha$ up to rearrangement. In particular $m_{\emptyset}=1$ and $m_{\alpha}=0$ whenever $l>n$.
\end{defn}  

It is well known that $\mathsf{Sym}_{n}$ is generated (transcendentally) by $e_{1},e_{2},\dots,e_{n}$. That is we have a $\mathbb{Z}$-algebra isomorphism $\mathsf{Sym}_{n}\cong \mathbb{Z}[e_{1},\dots,e_{n}]$ with the latter being a free polynomial algebra in $n$ commuting generators. One may show that $\{m_{\alpha} \ | \ \alpha\in\mathcal{P}(k,l), k\geq 0, n\geq l \geq 0\}$ forms a $\mathbb{Z}$-basis for $\mathsf{Sym}_{n}$. Assigning each variable $x_{i}$ a degree of 1, then let $\mathsf{Sym}_{n}^{k}$ denote the degree $k$ component of $\mathsf{Sym}_{n}$. In particular $e_{k},m_{\alpha}\in\mathsf{Sym}_{n}^{k}$ for any $\alpha\in\mathcal{P}(k,l)$. For any $N>n$ we have a surjective $\mathbb{Z}$-module homomorphisms $\rho_{N,n}:\mathsf{Sym}_{N}^{k}\rightarrow\mathsf{Sym}_{n}^{k}$ given by evaluating the variables $x_{n+1},\dots,x_{N}$ at zero. One can show that the collection of such morphisms defines an inverse system for the $\mathbb{Z}$-modules $\mathsf{Sym}_{n}^{k}$ (with fixed $k$), and so we have the inverse limit
\[ \mathsf{Sym}^{k}:=\varprojlim\mathsf{Sym}_{n}^{k}. \]
Any element of $\mathsf{Sym}^{k}$ is of the form $(f_{1},f_{2},\dots)$ with $f_{n}\in\mathsf{Sym}_{n}^{k}$ and where $\rho_{N,n}(f_{N})=f_{n}$ for all $N>n$. One can note that $\rho_{N,n}(m_{\alpha}(x_{1},\dots,x_{N}))=m_{\alpha}(x_{1},\dots,x_{n})$ for any $N>n$ and $\alpha\in\mathcal{P}(k,l)$. Thus we write $m_{\alpha}=(m_{\alpha}(x_{1}),m_{\alpha}(x_{1},x_{2}),\dots)$ and call such elements of $\mathsf{Sym}^{k}$ the \emph{monomial symmetric functions}. The same can be said for $e_{k}$ since $e_{k}=m_{(1^{k})}$ where $(1^{k})$ is the partition of size $k$ consisting of $k$ parts equal to 1. The $\mathbb{Z}$-algebra of \emph{symmetric functions} is given by
\[ \mathsf{Sym}:=\bigoplus_{k\in\mathbb{Z}_{\geq 0}}\mathsf{Sym}^{k}. \] 
We have the isomorphism of $\mathbb{Z}$-algebras $\mathsf{Sym}\cong\mathbb{Z}[e_{1},e_{2},\dots]$, and a $\mathbb{Z}$-basis $\{m_{\alpha} \ | \ \alpha\in\mathcal{P}(k,l), k\geq l\geq 0\}$.


\section{Jucys-Murphy Elements and Generators of $\mathsf{FH}_{0}$}

In this section we recall the Jucys-Murphy elements of the symmetric group algebras, and their connections to the Farahat-Higman algebras, see also \cite[Section 3]{Ryba21}.

\begin{defn}
For each $1\leq i\leq n$, the $i$-th \emph{Jucys-Murphy element} $L_{i}$ of $\mathbb{Z}\mathfrak{S}_{n}$ is defined by
\[ L_{i}:=\sum_{1\geq j<i}(j,i). \]
\end{defn}

Note that $L_{1}=0$. The following relations regarding the Jucys-Murphy elements are well-known.

\begin{lem} \label{Lem:JMRelations}
The following hold within $\mathbb{Z}\mathfrak{S}_{n}$:
\begin{itemize}
\item[(1)] $L_{i}L_{j}=L_{j}L_{i}$ for all $i,j\in[n]$
\item[(2)] $s_{i}L_{j}=L_{j}s_{i}$ for all $i\in[n-1]$ and $j\neq i,i+1$
\item[(3)] $L_{i+1}=s_{i}L_{i}s_{i}+s_{i}$ for all $i\in[n-1]$
\end{itemize}
where $s_{i}=(i,i+1)$ is the simple transposition exchanging $i$ and $i+1$ for all $i\in[n-1]$.
\end{lem}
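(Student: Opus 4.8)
The plan is to verify the three relations by direct computation with conjugates of transpositions, handling (3) and (2) first and then deducing (1) from the centrality of a class sum, so that commutativity never has to be checked by expanding products of transpositions.

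First I would establish (3), which is the computational heart. Writing $L_i=\sum_{1\le j<i}(j,i)$ and noting that $s_i=(i,i+1)$ fixes every $j<i$, conjugation gives $s_i(j,i)s_i=(j,i+1)$ for each such $j$. Hence $s_iL_is_i=\sum_{1\le j<i}(j,i+1)$, and isolating the transposition $(i,i+1)=s_i$ yields $L_{i+1}=\sum_{1\le j<i+1}(j,i+1)=s_iL_is_i+s_i$, as claimed. Next I would treat (2). For $j\ne i,i+1$ the point $j$ is fixed by $s_i$, so conjugating $L_j=\sum_{k<j}(k,j)$ by $s_i$ produces $\sum_{k<j}(s_i(k),j)$, and it suffices to observe that $s_i$ permutes the index set $\{1,\dots,j-1\}$ among itself: if $j<i$ then $s_i$ fixes every $k<j$, while if $j>i+1$ then both $i$ and $i+1$ lie in $\{1,\dots,j-1\}$ and $s_i$ merely interchanges them. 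In either case the sum is unchanged, giving $s_iL_js_i=L_j$, that is $s_iL_j=L_js_i$.

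The one part needing a different idea is (1), commutativity; here I would avoid a brute-force commutator calculation by exploiting a central element. Set $Z_k:=\sum_{1\le a<b\le k}(a,b)=L_1+\cdots+L_k$, the formal sum of all transpositions in $\mathfrak{S}_k$. Being a full conjugacy class sum, $Z_k$ is central in $\mathbb{Z}\mathfrak{S}_k$. For $i<j$ we have $L_i\in\mathbb{Z}\mathfrak{S}_i\subseteq\mathbb{Z}\mathfrak{S}_j$, so $L_i$ commutes with both $Z_j$ and $Z_{j-1}$; since $L_j=Z_j-Z_{j-1}$, it follows that $L_iL_j=L_i(Z_j-Z_{j-1})=(Z_j-Z_{j-1})L_i=L_jL_i$. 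The case $i>j$ is symmetric and $i=j$ is trivial.

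The main obstacle is conceptual rather than technical: recognizing that commutativity is cleanest via the telescoping $L_j=Z_j-Z_{j-1}$ together with the centrality of the transposition class sums, rather than expanding $L_iL_j$ into products of transpositions and matching terms. Everything else reduces to bookkeeping about which points $s_i$ moves, so I expect (3) and (2) to be routine once the conjugation formula $\sigma(a,b)\sigma^{-1}=(\sigma(a),\sigma(b))$ is invoked.
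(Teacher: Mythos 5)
Your proof is correct and complete. The paper itself offers no argument for this lemma --- it simply records the relations as well-known --- so there is nothing to compare against; all three of your verifications go through. Parts (2) and (3) are the standard conjugation computations, and your treatment of (1) via the telescoping identity $L_j = Z_j - Z_{j-1}$, where $Z_k = \sum_{1\le a<b\le k}(a,b)$ is the central class sum of transpositions in $\mathbb{Z}\mathfrak{S}_k$, is a clean and standard way to get commutativity without expanding products of transpositions; the only point worth making explicit is that $L_i$ lies in $\mathbb{Z}\mathfrak{S}_{j-1}$ (not just $\mathbb{Z}\mathfrak{S}_j$) when $i<j$, which you implicitly use and which holds since $i\le j-1$.
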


%
%
%
%

It was shown in \cite[Theorem 1.9]{Murphy83} that the center is precisley the collection of all symmetric polynomials in the Jucys-Murphy elements. Then from the last section, this means that
\begin{equation} \label{SymPolysJMCentral}
Z(\mathbb{Z}\mathfrak{S}_{n})=\langle e_{1}(L_{1},\dots,L_{n}),\dots,e_{n}(L_{1},\dots,L_{n})\rangle.
\end{equation}
We know the center $Z(\mathbb{Z}\mathfrak{S}_{n})$ has a $\mathbb{Z}$-basis given by the set $\{K_{n}(\lambda) \ | \ \lambda\in\Lambda_{\leq n}(0)\}$. It is natural to ask how the elementary symmetric polynomials in the Jucys-Murphy elements decompose as a linear combination of class sums. This was answered in \cite[Section 3]{Jucys74} as we recall now. Let $\lambda\in\Lambda(0)$ contain $l$ many cycles. We say the \emph{reduced degree} of $\lambda$ is $\mathsf{rd}(\lambda):=\mathsf{deg}_{0}(\lambda)-l$, i.e. the number of symbols $*$ present in $\lambda$ minus the number of cycles. Then it was shown that
\[ e_{k}(L_{1},\dots,L_{n})=\sum\limits_{\substack{\lambda\in\Lambda_{\leq n}(0) \\ \mathsf{rd}(\lambda)=k}}K_{n}(\lambda) \]
for any $k\in[n]$. Recall that $\mathsf{pr}_{n,0}(\mathsf{FH}_{0})=Z(\mathbb{Z}\mathfrak{S}_{n})$, and from the above formula there are natural elements of $\mathsf{FH}_{0}$ to consider which project down to the elementary symmetric polynomials in the Jucys-Murphy elements.

\begin{defn}
For any $k\in\mathbb{Z}_{\geq 0}$ define
\[ E_{k}:=\sum\limits_{\substack{\lambda\in\Lambda(0) \\ \mathsf{rd}(\lambda)=k}}K(\lambda), \]
\end{defn}

Note only finitely many elements of $\Lambda(0)$ have reduced degree $k$, so the above definition is well-defined. These elements project down to the elementary symmetric polynomials in the corresponding Jucys-Murphy elements, and are precisely the elements discussed in the introduction. In particular, they were shown in \cite{FH59} to generate all of $\mathsf{FH}_{0}$, which we record here.

\begin{thm} \label{Thm:FHGenertors} [Theorem 2.5 of \cite{FH59}]
The set of elements $\{E_{1},E_{2},\dots\}$ generates $\mathsf{FH}_{0}$ as an $R$-algebra.
\end{thm}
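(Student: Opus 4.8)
The plan is to filter $\mathsf{FH}_{0}$ by reduced degree and to run an induction on this filtration showing that the $R$-subalgebra $\mathsf{A}$ generated by $\{E_{1},E_{2},\dots\}$ exhausts $\mathsf{FH}_{0}$. For $d\geq 0$ set $\mathcal{F}_{d}:=\mathsf{Span}_{R}\{K(\lambda) \ | \ \lambda\in\Lambda(0), \ \mathsf{rd}(\lambda)\leq d\}$. First I would verify that this is an algebra filtration. Since $\mathsf{rd}(\lambda)$ is exactly the reflection length (minimal number of transpositions) of any permutation in $\mathsf{CL}_{\mathbb{N},0}(\lambda)$, and reflection length is subadditive, a product $gh$ with $g\in\mathsf{CL}_{\mathbb{N},0}(\lambda)$ and $h\in\mathsf{CL}_{\mathbb{N},0}(\mu)$ satisfies $\mathsf{rd}(gh)\leq\mathsf{rd}(\lambda)+\mathsf{rd}(\mu)$; hence $f_{\lambda,\mu}^{\nu}\neq 0$ forces $\mathsf{rd}(\nu)\leq\mathsf{rd}(\lambda)+\mathsf{rd}(\mu)$ and so $\mathcal{F}_{a}\mathcal{F}_{b}\subseteq\mathcal{F}_{a+b}$. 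Write $\mathsf{G}:=\bigoplus_{d}\mathcal{F}_{d}/\mathcal{F}_{d-1}$ for the associated graded $R$-algebra; it is commutative since $\mathsf{FH}_{0}$ is, it has homogeneous $R$-basis $\{\overline{K(\lambda)}\}$, and its degree-$d$ part $\mathsf{G}^{d}$ is free of rank $|\mathcal{P}_{d}|$, as the proper partitions of reduced degree $d$ biject with the partitions of $d$.

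I would next show that the multiplication of $\mathsf{G}$ has constant ($\mathbb{Z}$-valued) structure constants. By the proof of \Cref{Thm:PolyStructureConstants}, the $t$-degree of $f_{\lambda,\mu}^{\nu}$ equals the maximum over contributing factorisations $gh$ (of class $\nu$) of $|\mathsf{Sup}(g)\cup\mathsf{Sup}(h)|-|\mathsf{Sup}(gh)|$. When $\mathsf{rd}(\nu)=\mathsf{rd}(\lambda)+\mathsf{rd}(\mu)$ every such factorisation is geodesic for reflection length, and a geodesic concatenation uses only transpositions supported inside $\mathsf{Sup}(gh)$, forcing $\mathsf{Sup}(g)\cup\mathsf{Sup}(h)=\mathsf{Sup}(gh)$ and hence $t$-degree $0$. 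Therefore $\mathsf{G}=R\otimes_{\mathbb{Z}}\mathsf{G}_{\mathbb{Z}}$, where $\mathsf{G}_{\mathbb{Z}}$ is the $\mathbb{Z}$-span of the $\overline{K(\lambda)}$.

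The heart of the argument is to identify $\mathsf{G}_{\mathbb{Z}}$ with $\mathsf{Sym}$ sending $\overline{E_{k}}\mapsto e_{k}$. Here I would use the Jucys--Murphy picture: for each $n$ the projection $\mathsf{pr}_{n,0}$ carries $\mathcal{F}_{d}$ into the space of symmetric polynomials of degree at most $d$ in $L_{1},\dots,L_{n}$ (via $\mathsf{pr}_{n,0}(E_{k})=e_{k}(L_{1},\dots,L_{n})$ together with Jucys' formula for $e_{k}(L_1,\dots,L_n)$ and \Cref{SymPolysJMCentral}), compatibly with the maps $\rho_{N,n}$. Passing to the inverse limit gives a graded homomorphism $\mathsf{G}_{\mathbb{Z}}\rightarrow\mathsf{Sym}$ with $\overline{E_{k}}\mapsto e_{k}$. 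Since the $e_{k}$ freely generate $\mathsf{Sym}=\mathbb{Z}[e_{1},e_{2},\dots]$, this map is surjective in each degree $d$; as $\mathsf{G}^{d}_{\mathbb{Z}}$ and $\mathsf{Sym}^{d}$ are free of the same finite rank $|\mathcal{P}_{d}|$, a surjection between them is an isomorphism. In particular the monomials $\{\overline{E_{k_{1}}}\cdots\overline{E_{k_{r}}} \ | \ k_{1}+\cdots+k_{r}=d\}$ form an $R$-basis of $\mathsf{G}^{d}$.

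Finally I would lift this back to $\mathsf{FH}_{0}$ by induction on $d$, proving $\mathcal{F}_{d}\subseteq\mathsf{A}$. The base case $\mathcal{F}_{0}=R\cdot K(\emptyset)=R\cdot 1$ is immediate. Assuming $\mathcal{F}_{d-1}\subseteq\mathsf{A}$, any $\lambda$ with $\mathsf{rd}(\lambda)=d$ has $\overline{K(\lambda)}=\sum_{\kappa}r_{\kappa}\overline{E_{k_{1}}}\cdots\overline{E_{k_{r}}}$ for some $r_{\kappa}\in R$ by the previous step, so $K(\lambda)-\sum_{\kappa}r_{\kappa}E_{k_{1}}\cdots E_{k_{r}}\in\mathcal{F}_{d-1}\subseteq\mathsf{A}$, giving $K(\lambda)\in\mathsf{A}$ and thus $\mathcal{F}_{d}\subseteq\mathsf{A}$; taking the union over $d$ yields $\mathsf{A}=\mathsf{FH}_{0}$. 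The main obstacle is precisely the integrality encoded in the middle two steps: the individual basis generators $K(c_{i})$ do \emph{not} generate over $R$, since \Cref{Prop:LeadingTerm} gives $K(c_{1})^{k}=k!\,K(c_{1}^{k})+(\text{lower reduced degree})$ and such factorials obstruct integral inversion. The theorem succeeds only because the $E_{k}$ map to the \emph{free} polynomial generators $e_{k}$ of $\mathsf{Sym}$; establishing the graded isomorphism $\mathsf{G}\cong R\otimes\mathsf{Sym}$ with $\overline{E_{k}}\mapsto e_{k}$ (equivalently, the unimodularity of the change of basis in each $\mathsf{G}^{d}$) is the crux, and it is here that the Jucys--Murphy description of $Z(\mathbb{Z}\mathfrak{S}_{n})$ does the real work.
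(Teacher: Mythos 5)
Your filtration by reduced degree (step 1), the constancy of the top-degree structure constants via geodesic transposition factorisations (step 2), the rank count $\mathrm{rank}\,\mathsf{G}^{d}_{\mathbb{Z}}=|\mathcal{P}_{d}|$, and the final lifting induction (step 4) are all correct, and this architecture is exactly the one the paper itself deploys later when it shows $\{M_{\alpha} \ | \ \alpha\in\mathcal{P}\}$ is an $R$-basis of $\mathsf{FH}_{0}$ (for the theorem itself the paper only cites \cite{FH59}). Your remark that the individual $K(c_{i})$ fail to generate integrally because of the factorials produced by \Cref{Prop:LeadingTerm} is also accurate and well placed.

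The gap is in the middle of step 3: the graded ring homomorphism $\mathsf{G}_{\mathbb{Z}}\rightarrow\mathsf{Sym}$ with $\overline{E_{k}}\mapsto e_{k}$ is asserted rather than constructed. ``Passing to the inverse limit'' does not work as described: the evaluation maps $\mathsf{Sym}_{n}\rightarrow Z(\mathbb{Z}\mathfrak{S}_{n})$, $f\mapsto f(L_{1},\dots,L_{n})$, point the wrong way and are far from injective, and there are no transition maps $Z(\mathbb{Z}\mathfrak{S}_{N})\rightarrow Z(\mathbb{Z}\mathfrak{S}_{n})$ compatible with $\rho_{N,n}$, so the spaces of ``symmetric polynomials of degree at most $d$ in $L_{1},\dots,L_{n}$'' do not form an inverse system from which a map \emph{out of} $\mathsf{G}_{\mathbb{Z}}$ can be extracted. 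Moreover \Cref{SymPolysJMCentral} by itself gives no degree control: it says each $K_{n}(\lambda)$ is \emph{some} polynomial in the $e_{k}(L_{1},\dots,L_{n})$, not that it lies in JM-degree at most $\mathsf{rd}(\lambda)$ uniformly in $n$ --- and the containment $\mathsf{pr}_{n,0}(\mathcal{F}_{d})\subseteq\{\text{degree}\leq d \text{ symmetric polynomials in the } L_{i}\}$ that you invoke is essentially equivalent to the theorem being proved. What closes the gap is the unitriangularity statement $m_{\alpha}(L_{1},\dots,L_{n})=K_{n}(\overline{\alpha})+(\text{strictly lower order})$ from Murphy's proof (the Proposition following \Cref{Lem:ElementsMAlpha}, i.e.\ \cite[Proposition 3.11]{Ryba21}): passing to the associated graded, it shows that the elements $\overline{M_{\alpha}}$ with $\alpha\vdash d$ --- each an integral polynomial in the $\overline{E_{k}}$ --- are related to the basis $\{\overline{K(\overline{\alpha})} \ | \ \alpha\vdash d\}$ of $\mathsf{G}^{d}_{\mathbb{Z}}$ by a unitriangular integer matrix and hence span it over $\mathbb{Z}$. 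With that input substituted for the inverse-limit argument, your rank count and lifting induction finish the proof exactly as written.
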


As was also mentioned in the introduction, it was proven in \cite[Theorem 3.8]{Ryba21} that we have an isomorphism $\mathsf{FH}_{0}\cong R\otimes_{\mathbb{Z}}\mathsf{Sym}$ of $R$-algebras which associates $E_{k}$ to $e_{k}$. In the next section we will prove an analogous result for $\mathsf{FH}_{m}$ in \emph{\Cref{Thm:FHmIsomorphicm}}, and to do so it will be helpful to discuss elements of $\mathsf{FH}_{0}$ which are to the monomial symmetric polynomials $m_{\alpha}$ what the elements $E_{k}$ are to the $e_{k}$.

\begin{lem} \label{Lem:ElementsMAlpha}
For any $k\geq l\geq 0$ and $\alpha\in\mathcal{P}(k,l)$, there exists an element $M_{\alpha}\in\mathsf{FH}_{0}$ such that
\[ \mathsf{pr}_{n,0}(M_{\alpha})=m_{\alpha}(L_{1},\dots,L_{n}). \]
\end{lem}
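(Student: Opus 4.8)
The plan is to realise $M_\alpha$ explicitly as a polynomial in the generators $E_k$ of $\mathsf{FH}_0$ and then transport the defining identity of $m_\alpha$ through the projection $\mathsf{pr}_{n,0}$. First I would invoke the fundamental theorem of symmetric functions over $\mathbb{Z}$, recorded in Section 7 as $\mathsf{Sym}\cong\mathbb{Z}[e_1,e_2,\dots]$: since $m_\alpha\in\mathsf{Sym}$ is homogeneous of degree $k=|\alpha|$, there is a (unique) integer polynomial $P_\alpha$ involving only $e_1,\dots,e_k$ (with the grading $\deg e_j=j$) such that
\[ m_\alpha = P_\alpha(e_1,e_2,\dots) \]
as an identity in $\mathsf{Sym}$. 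I would then simply set
\[ M_\alpha := P_\alpha(E_1,E_2,\dots)\in\mathsf{FH}_0, \]
which is well-defined since only finitely many $E_j$ occur and each $E_j\in\mathsf{FH}_0$.

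Next, because $\mathsf{pr}_{n,0}:\mathsf{FH}_0\rightarrow Z(\mathbb{Z}\mathfrak{S}_n)$ is a ring homomorphism and $\mathsf{pr}_{n,0}(E_j)=e_j(L_1,\dots,L_n)$ by the result of Jucys recalled above, I obtain
\[ \mathsf{pr}_{n,0}(M_\alpha) = P_\alpha\bigl(e_1(L_1,\dots,L_n),e_2(L_1,\dots,L_n),\dots\bigr). \]
It then remains only to identify the right-hand side with $m_\alpha(L_1,\dots,L_n)$.

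The crux is the following: the identity $m_\alpha=P_\alpha(e_1,e_2,\dots)$ in $\mathsf{Sym}$ restricts, under evaluation at $n$ variables, to a genuine polynomial identity
\[ m_\alpha(x_1,\dots,x_n) = P_\alpha\bigl(e_1(x_1,\dots,x_n),\dots,e_n(x_1,\dots,x_n)\bigr) \]
in the commuting indeterminates $x_1,\dots,x_n$ (where $e_j(x_1,\dots,x_n)=0$ for $j>n$). Since the Jucys--Murphy elements $L_1,\dots,L_n$ pairwise commute by \Cref{Lem:JMRelations}(1), this identity specialises upon substituting $x_i\mapsto L_i$, yielding $P_\alpha(e_1(L_1,\dots,L_n),\dots)=m_\alpha(L_1,\dots,L_n)$ in $\mathbb{Z}\mathfrak{S}_n$; combined with the previous display this gives $\mathsf{pr}_{n,0}(M_\alpha)=m_\alpha(L_1,\dots,L_n)$, as required.

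The main obstacle --- really the only nontrivial point --- is justifying this last specialisation: one must be sure that the relation between $m_\alpha$ and the $e_j$ is a polynomial identity valid in $\mathbb{Z}[x_1,\dots,x_n]$ for every $n$, so that it survives substitution of commuting ring elements, rather than merely a relation in the inverse-limit algebra $\mathsf{Sym}$. This is precisely guaranteed by the compatibility of the truncation maps $\rho_{N,n}$ used to define $\mathsf{Sym}$ in Section 7, together with the commutativity of the $L_i$, which is what allows a symmetric polynomial to be unambiguously evaluated at them.
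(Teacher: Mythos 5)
Your proposal is correct and follows essentially the same route as the paper: the paper's (two-sentence) proof likewise writes $m_\alpha$ as an integer polynomial in the $e_j$ via $\mathsf{Sym}=\mathbb{Z}[e_1,e_2,\dots]$ and defines $M_\alpha$ by substituting $E_j$ for $e_j$. You simply spell out the details the paper leaves implicit, namely that $\mathsf{pr}_{n,0}$ is a ring homomorphism sending $E_j\mapsto e_j(L_1,\dots,L_n)$ and that the identity $m_\alpha=P_\alpha(e_1,e_2,\dots)$ survives evaluation at the commuting elements $L_1,\dots,L_n$.
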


\begin{proof}
As $\mathsf{Sym}=\mathbb{Z}[e_{1},e_{2},\dots]$ then $m_{\alpha}$ is a finite $\mathbb{Z}$-linear combination of monomials in $e_{1},e_{2},\dots$. Letting $M_{\alpha}$ be obtained from $m_{\alpha}$ by replacing $e_{k}$ with $E_{k}$ gives the element we are looking for.

\end{proof}

We wish to say a little more about the elements $M_{\alpha}$. For any $\alpha=(a_{1},\dots,a_{l})\in\mathcal{P}(k,l)$ let $\overline{\alpha}\in\Lambda(0)$ denote the cycle shape with $l$ cycles of lengths $a_{1}+1,a_{2}+1,\dots,a_{l}+1$. When $\alpha=\emptyset$ then $\overline{\emptyset}=\emptyset$. One can see that $\mathsf{rd}(\overline{\alpha})=k$, the size of $\alpha$. Then in the proof of Theorem $1.9$ of \cite{Murphy83}, see also \cite[Proposition 3.11]{Ryba21}, the following result was proven. Let $\mathcal{P}:=\cup_{k\geq l\geq 0}\mathcal{P}(k,l)$. Note that the map $\overline{(-)}:\mathcal{P}\rightarrow\Lambda(0)$ sending $\alpha\mapsto\overline{\alpha}$ is a bijection. 

\begin{prop}
Let $\alpha\in\mathcal{P}$ be such that $\mathsf{deg}_{0}(\overline{\alpha})\leq n$, then 
\[ m_{\alpha}(L_{1},\dots,L_{n})=K_{n}(\overline{\alpha})+\sum c_{\mu}(n)K_{n}(\mu), \]
where the sum runs over all $\mu\in\Lambda(0)$ such that $\mathsf{rd}(\mu)<\mathsf{rd}(\overline{\alpha})$ or that $\mathsf{rd}(\mu)=\mathsf{rd}(\overline{\alpha})$ and $\mu$ contains less cycles than $\overline{\alpha}$ (noting that only finitely many such $\mu$ exist), and where $c_{\mu}(n)\in\mathbb{Z}_{\geq 0}$.
\end{prop}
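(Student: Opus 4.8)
The plan is to expand $m_{\alpha}(L_{1},\dots,L_{n})$ into a sum of products of transpositions and to sort these products by two statistics — first reduced degree, then number of cycles — so as to mirror the two-level ordering in the statement. Since $L_{i}=\sum_{j<i}(j,i)$ and $m_{\alpha}$ is a sum of monomials $L_{i_{1}}^{a_{1}}\cdots L_{i_{l}}^{a_{l}}$ with the $i_{s}$ distinct, substituting makes every resulting summand a product of exactly $k=|\alpha|$ transpositions $(j,i)$ with $j<i$. Because $m_{\alpha}(L_{1},\dots,L_{n})$ is a symmetric polynomial in the pairwise-commuting Jucys--Murphy elements it lies in $Z(\mathbb{Z}\mathfrak{S}_{n})$ (\Cref{SymPolysJMCentral}), hence is a $\mathbb{Z}_{\geq 0}$-combination of class sums; the coefficient $c_{\mu}(n)$ is simply the number of summands landing in $\mathsf{CL}_{n,0}(\mu)$, which is automatically a nonnegative integer, and constancy on $m$-classes lets us collect terms into the $K_{n}(\mu)$.

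Next I would record the two elementary bounds governing a single product $w$ of $k$ transpositions of the above shape. As a product of $k$ transpositions, its reduced degree (equal to its minimal transposition length) satisfies $\mathsf{rd}(w)\leq k$. Separately, $\mathsf{Sup}(w)$ is contained in the $l$ distinct ``large'' indices $i$ together with the at most $k$ ``small'' indices $j$, so $|\mathsf{Sup}(w)|\leq k+l$. Writing $c(w)$ for the number of nontrivial cycles, we have $|\mathsf{Sup}(w)|=\mathsf{rd}(w)+c(w)$, so any summand with $\mathsf{rd}(w)=k$ satisfies $c(w)\leq l$. Thus each summand either has reduced degree strictly below $k$, or has reduced degree $k$ and at most $l$ cycles — precisely the error regime once the extremal case $c(w)=l$ is identified.

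The heart of the argument is the top stratum $\mathsf{rd}(w)=k$, $c(w)=l$. There $|\mathsf{Sup}(w)|=k+l$ forces all $l+k$ indices (the large $i$'s and all small $j$'s) to be pairwise distinct, so the $l$ blocks have disjoint supports and $w$ is their commuting product. The key lemma to prove is that a product $(j_{1},i)(j_{2},i)\cdots(j_{a},i)$ of transpositions sharing the common larger index $i$, with distinct $j$'s, is a single $(a+1)$-cycle whose maximal element is $i$; consequently each block contributes one cycle of length $a_{s}+1$, $w$ lies in $\mathsf{CL}_{n,0}(\overline{\alpha})$, and conversely every class-$\overline{\alpha}$ element arises this way. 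The same lemma yields uniqueness: for a fixed $\omega\in\mathsf{CL}_{n,0}(\overline{\alpha})$ each cycle $\gamma$ of $\omega$ can only come from the block indexed by $\max(\gamma)$ with exponent $|\gamma|-1$, which determines the monomial and then forces every small index, so $\omega$ occurs with coefficient exactly $1$. This produces the leading term $K_{n}(\overline{\alpha})$ with coefficient $1$, with all remaining summands falling into the error regime of the previous paragraph.

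I expect the main obstacle to be this ``star factorization'' lemma — that in a product of transpositions $(j,i)$ with $j<i$ the shared index is forced to be the cycle maximum — since it is exactly what simultaneously identifies the top class as $\overline{\alpha}$ and pins the leading coefficient to $1$. The surrounding bookkeeping with reduced degree and support sizes is routine, but it must be carried out carefully to keep the two-level ordering (reduced degree first, then cycle count) cleanly separated, so that no stray $\mathsf{rd}=k$ term with $l$ cycles escapes outside $\mathsf{CL}_{n,0}(\overline{\alpha})$.
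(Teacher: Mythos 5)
Your argument is correct and is essentially the standard proof of this fact --- the paper itself does not prove the proposition but defers to Murphy's Theorem 1.9 and Ryba's Proposition 3.11, whose arguments run exactly as you describe: expand each monomial in the Jucys--Murphy elements into products of $k$ transpositions $(j,i)$ with $j<i$, use the reflection-length and support bounds to confine every summand to the stated strata, and identify the top stratum ($\mathsf{rd}=k$, $l$ cycles, hence all $k+l$ indices distinct) with $\mathsf{CL}_{n,0}(\overline{\alpha})$ via the star-factorization fact that $(j_{1},i)\cdots(j_{a},i)$ with distinct $j_{s}<i$ is an $(a+1)$-cycle whose maximum is $i$. The one point to watch is that your multiplicity-one count for the leading coefficient presumes $m_{\alpha}$ sums each distinct monomial once (as in the paper's prose description of $m_{\alpha}$); reading the displayed formula literally as a sum over ordered tuples in $[n]^{!l}$ would, for a partition with repeated parts, inflate the leading coefficient by the product of the factorials of the part multiplicities.
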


Applying \emph{\Cref{Lem:FHEqualityByProjection}} allows us to deduce the following:

\begin{lem}
Let $\alpha\in\mathcal{P}$, then 
\begin{equation} \label{MAlphaLeadingTerm}
M_{\alpha}=K(\overline{\alpha})+\sum c_{\mu}(t)K(\mu),
\end{equation}
where the sum runs over all $\mu\in\Lambda(0)$ such that $\mathsf{rd}(\mu)<\mathsf{rd}(\overline{\alpha})$ or that $\mathsf{rd}(\mu)=\mathsf{rd}(\overline{\alpha})$ and $\mu$ contains less cycles than $\overline{\alpha}$ (noting that only finitely many such $\mu$ exist), and where $c_{\mu}(t)\in R$ are such that $c_{\mu}(n)$ are the coefficients of $K_{n}(\mu)$ in the above proposition.
\end{lem}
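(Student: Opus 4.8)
The plan is to lift the preceding proposition from each center $Z(\mathbb{Z}\mathfrak{S}_{n})$ up to $\mathsf{FH}_{0}$ using the separation result \Cref{Lem:FHEqualityByProjection}, in the same spirit as the lifting of associativity and the identity in \Cref{Prop:FHisRAlg}. Since $M_{\alpha}$ lies in $\mathsf{FH}_{0}$, which is free over $R$ on the basis $\{K(\nu)\mid\nu\in\Lambda(0)\}$, I may write $M_{\alpha}=\sum_{\nu\in\Lambda(0)}c_{\nu}(t)K(\nu)$ uniquely, with each $c_{\nu}(t)\in R$ and only finitely many nonzero. The entire content of the lemma is then to identify which $\nu$ occur and with what polynomial coefficient; the polynomials themselves already exist and lie in $R$ by this free-module structure, so no new integrality input is required.

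First I would apply $\mathsf{pr}_{n,0}$ to this expansion. By \Cref{Lem:ElementsMAlpha} we have $\mathsf{pr}_{n,0}(M_{\alpha})=m_{\alpha}(L_{1},\dots,L_{n})$, while $\mathsf{pr}_{n,0}(K(\nu))=K_{n}(\nu)$, so for every $n\geq 0$,
\[ \sum_{\nu\in\Lambda(0)}c_{\nu}(n)K_{n}(\nu)=m_{\alpha}(L_{1},\dots,L_{n}). \]
For all $n$ large enough that every $\nu$ in the finite support of $M_{\alpha}$, together with every $\mu$ appearing in the proposition, satisfies $\mathsf{deg}_{0}(\nu)\leq n$, the class sums involved are nonzero, pairwise distinct, and part of the $\mathbb{Z}$-basis $\{K_{n}(\nu)\}$ of $Z(\mathbb{Z}\mathfrak{S}_{n})$. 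Comparing the displayed identity with the expansion $m_{\alpha}(L_{1},\dots,L_{n})=K_{n}(\overline{\alpha})+\sum_{\mu}c_{\mu}(n)K_{n}(\mu)$ of the proposition and equating coefficients then forces, for all such $n$, that $c_{\overline{\alpha}}(n)=1$, that $c_{\nu}(n)=0$ for every $\nu$ lying outside $\{\overline{\alpha}\}$ and the index set of the proposition's sum, and that $c_{\mu}(n)$ agrees with the proposition's coefficient on the remaining $\mu$.

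These coefficient identities hold for infinitely many integers $n$, so I conclude at the level of polynomials: an element of $R$ taking the value $1$ (resp. $0$) at infinitely many integers is the constant $1$ (resp. $0$), giving $c_{\overline{\alpha}}(t)=1$ and $c_{\nu}(t)=0$ for the disallowed $\nu$. Hence $M_{\alpha}=K(\overline{\alpha})+\sum_{\mu}c_{\mu}(t)K(\mu)$ with the sum restricted exactly as claimed, and by construction $c_{\mu}(n)$ equals the coefficient of $K_{n}(\mu)$ in the proposition for all large $n$. The triangularity constraints on $\mu$ (bounded reduced degree, or equal reduced degree with fewer cycles) are inherited verbatim from the proposition rather than re-derived, so no fresh combinatorics enters. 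I expect the only genuine point requiring care to be bookkeeping: fixing the threshold on $n$ explicitly enough that the class sums in play are simultaneously nonzero and linearly independent, which is precisely what legitimises the coefficient comparison underlying the whole argument.
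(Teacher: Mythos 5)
Your argument is correct and is exactly the paper's intended route: the paper derives this lemma in one line by applying \emph{\Cref{Lem:FHEqualityByProjection}} to the preceding proposition, and your proposal simply spells out that separation argument (expand $M_{\alpha}$ in the $K(\nu)$ basis, project via $\mathsf{pr}_{n,0}$, compare coefficients against the proposition for infinitely many $n$, and conclude at the level of polynomials in $R$). No gaps; the bookkeeping point you flag about choosing $n$ large enough is the only care needed and you handle it correctly.
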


We may now show that the elements $M_{\alpha}$ provide an $R$-basis for $\mathsf{FH}_{0}$.

\begin{prop}
The set $\{M_{\alpha} \ | \ \alpha\in\mathcal{P}\}$ gives an $R$-basis for $\mathsf{FH}_{0}$. 
\end{prop}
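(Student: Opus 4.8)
The plan is to show that $\{M_{\alpha} \mid \alpha \in \mathcal{P}\}$ is both a spanning set and an $R$-linearly independent set for $\mathsf{FH}_{0}$, using the triangularity result of \emph{\Cref{MAlphaLeadingTerm}} together with the fact that $\{K(\lambda) \mid \lambda \in \Lambda(0)\}$ is already an $R$-basis of $\mathsf{FH}_{0}$ by construction. Since $\overline{(-)}:\mathcal{P}\to\Lambda(0)$ is a bijection, the two index sets have the same cardinality, so the key point is to exploit the leading-term expansion $M_{\alpha} = K(\overline{\alpha}) + \sum c_{\mu}(t) K(\mu)$, where the correction terms $\mu$ are strictly smaller than $\overline{\alpha}$ in the partial order measuring first reduced degree $\mathsf{rd}$ and then (for equal reduced degree) number of cycles.

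First I would fix this partial order on $\Lambda(0)$ explicitly: declare $\mu \prec \nu$ if either $\mathsf{rd}(\mu) < \mathsf{rd}(\nu)$, or $\mathsf{rd}(\mu) = \mathsf{rd}(\nu)$ and $\mu$ has strictly fewer cycles than $\nu$. The crucial features are that each $K(\overline{\alpha})$ appears in $M_{\alpha}$ with coefficient exactly $1$ and that every other term $K(\mu)$ occurring in $M_{\alpha}$ satisfies $\mu \prec \overline{\alpha}$; moreover only finitely many such $\mu$ occur. This means the transition matrix expressing the family $\{M_{\alpha}\}$ in terms of the basis $\{K(\lambda)\}$ is ``unitriangular'' with respect to $\prec$ (after transporting $\prec$ along the bijection $\overline{(-)}$), with all diagonal entries equal to $1 \in R$.

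For spanning, I would argue by induction on the order $\prec$ (which is well-founded below any fixed element, since each $\Lambda_{\leq n}(0)$ is finite): given any $K(\overline{\alpha})$, the relation \emph{\Cref{MAlphaLeadingTerm}} rearranges to $K(\overline{\alpha}) = M_{\alpha} - \sum c_{\mu}(t) K(\mu)$ with every $\mu \prec \overline{\alpha}$, and by the inductive hypothesis each $K(\mu)$ lies in the $R$-span of the $M_{\beta}$, so $K(\overline{\alpha})$ does too. As $\overline{(-)}$ is a bijection onto $\Lambda(0)$, this shows every basis element $K(\lambda)$ is an $R$-combination of the $M_{\alpha}$, hence they span. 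For linear independence, suppose $\sum_{\alpha} p_{\alpha}(t) M_{\alpha} = 0$ with finitely many nonzero $p_{\alpha}(t) \in R$; choosing $\alpha$ with $\overline{\alpha}$ maximal under $\prec$ among those with $p_{\alpha} \neq 0$, the coefficient of $K(\overline{\alpha})$ on the left is $p_{\alpha}(t)$ (no other $M_{\beta}$ with $p_{\beta}\neq 0$ contributes to $K(\overline{\alpha})$, since for $\beta \neq \alpha$ the element $\overline{\alpha}$ cannot appear in $M_{\beta}$: either $\overline{\beta} = \overline{\alpha}$ forces $\beta = \alpha$, or $\overline{\alpha}$ would have to be a strictly-smaller correction term of $\overline{\beta}$, contradicting maximality of $\overline{\alpha}$). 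Hence $p_{\alpha}(t) = 0$, a contradiction, and independence follows.

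The main obstacle is making the triangularity argument fully rigorous over the coefficient ring $R$ rather than a field: I must confirm that the unitriangular change of basis is invertible over $R$ itself, which it is precisely because all diagonal entries equal $1$ and the order $\prec$ is well-founded below each element, so the inverse can be computed by finite back-substitution without ever dividing by a non-unit of $R$. A secondary care point is checking that $\prec$ really is a strict partial order with the finiteness-of-downsets property needed for both the induction and the maximal-element selection; this follows since $\mathsf{rd}$ and the cycle count are nonnegative integers and each bounds only finitely many cycle shapes.
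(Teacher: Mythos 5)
Your proposal is correct and follows essentially the same route as the paper: both rest on the unitriangularity of the transition matrix coming from \emph{\Cref{MAlphaLeadingTerm}} with respect to the same partial order (reduced degree first, then number of cycles), deducing independence from the leading term $K(\overline{\alpha})$ with coefficient $1$ and spanning by inductively stripping off maximal terms. The only difference is organizational — you run a direct well-founded induction on the order itself, while the paper inducts on the reduced-degree filtration $\mathsf{FH}_{0}^{\leq n}$ — and your justification that down-sets are finite is sound since $\mathsf{rd}(\lambda)$ bounds both the number of cycles and the degree of a cycle shape.
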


\begin{proof}
By \emph{\Cref{MAlphaLeadingTerm}} the set $\{M_{\alpha} | \alpha\in\mathcal{P}\}$ is $R$-linearly independent, hence we only need to show that it spans $\mathsf{FH}_{0}$. Equip the $R$-basis $\{K(\lambda) | \lambda\in\Lambda(0)\}$ of $\mathsf{FH}_{0}$ with the partial order $<$ defined by letting $K(\mu)< K(\lambda)$ if $\mathsf{rd}(\mu)<\mathsf{rd}(\lambda)$ or if $\mathsf{rd}(\mu)=\mathsf{rd}(\lambda)$ and $\mu$ contains less cycles than $\lambda$. For any $n\in\mathbb{Z}_{\geq 0}$ let $\mathsf{FH}_{0}^{\leq n}$ denote the $\mathbb{Z}$-module spanned by $K(\lambda)$ such that $\mathsf{rd}(\lambda)\leq n$. We prove $\mathsf{FH}_{0}^{\leq n}\subset\mathsf{Span}_{R}\{M_{\alpha} \ | \ \alpha\in\mathcal{P}\}$ by induction on $n$. The base case $\mathsf{FH}_{0}^{\leq 0}=\mathsf{Span}_{R}\{K(\emptyset)\}$ holds since $M_{\emptyset}=K(\overline{\emptyset})=K(\emptyset)$. Consider $\mathsf{FH}_{0}^{\leq n}$ for some $n\geq 1$, and let
\[ K=\sum\limits_{\substack{\lambda\in\Lambda(0) \\ \mathsf{rd}(\lambda)\leq n}}f_{\lambda}(t)K(\lambda)\in\mathsf{FH}_{0}^{\leq n}. \]
Let $K(\lambda')$  be a maximal element in $\{K(\lambda) \ | \ f_{\lambda}(t)\neq 0\}$ with respect to $<$. Let $\alpha$ be the unique partition of $\mathcal{P}$ such that $\overline{\alpha}=\lambda'$, then $K-f_{\lambda'}(t)M_{\alpha}$ is an element of $\mathsf{FH}_{0}^{\leq n}$ whose terms are strictly less that $K(\lambda')$ in the partial order or incomparable. Continuing this procedure of removing maximal basis elements will lead to an element of $\mathsf{FH}_{0}^{\leq n-1}$, and by induction the result will belong to the $R$-span of $\{M_{\alpha} \ | \ \alpha\in\mathcal{P}\}$. Since we got their by substracting some $R$-linear combination of elements $M_{\alpha}$, the starting element $K$ must also belong to the $R$-span of $\{M_{\alpha} \ | \ \alpha\in\mathcal{P}\}$ which completes the proof by induction. The proposition follows since $\mathsf{FH}_{0}$ is the union of $\mathsf{FH}_{0}^{\leq n}$ for all $n\geq 0$.
 
\end{proof}


\section{The Isomorphism $\mathsf{FH}_{m}\cong R\otimes_{\mathbb{Z}}(\mathcal{H}_{m}\otimes\mathsf{Sym})$}

In this section we recall the definition of the degnerate affine Hecke algebra $\mathcal{H}_{m}$, and some of its structural properties. We define some elements of $\mathsf{FH}_{m}$ which mimic the variables generators of  $\mathcal{H}_{m}$, and end by proving the isomorphism in the section title.

\begin{defn} \label{Defn:DAHA}
The \emph{degenerate affine Hecke Algebra} $\mathcal{H}_{m}$ is the $\mathbb{Z}$-algebra presented with generating set $\{s_{i}, y_{j} \ | \ 1\leq i\leq m-1, \ j\in[m]\}$ and defining relations
\begin{multicols}{2}
\begin{itemize}
\item[(1i)] $s_{i}^{2} = 1$, for $i\in [m-1]$.
\item[(1ii)] $s_{i}s_{j} = s_{j}s_{i}$, for $j\neq i-1, i+1$.
\item[(1iii)] $s_{i}s_{i+1}s_{i} = s_{i+1}s_{i}s_{i+1}$, for $i\in [m-2]$.
\item[(2i)] $y_{i}y_{j}=y_{j}y_{i}$ for all $i,j\in[m]$.
\item[(2ii)] $s_{i}y_{j}=y_{j}s_{i}$ for all $j\neq i, i+1$.
\item[(2iii)] $y_{i+1}=s_{i}y_{i}s_{i}+s_{i}$ for all $i\in[m-1]$.
\end{itemize}
\end{multicols}
\end{defn}

The elements $s_{i}$ are the simple transpositions $(i,i+1)$ of $\mathfrak{S}_{m}$. The algebra $\mathcal{H}_{m}$ has a basis of the form
\begin{equation} \label{DAHABasis}
\{ \pi y_{1}^{\bm{d}(1)}\cdots y_{m}^{\bm{d}(m)} \ | \ \pi\in\mathfrak{S}_{m}, \bm{d}\in\mathbb{Z}_{\geq 0}^{[m]} \}.
\end{equation}
For a proof see \cite[Theorem 3.2.2]{K05}. Hence one can deduce that the subalgebra generated by the elements $s_{i}$ is a copy of $\mathbb{Z}\mathfrak{S}_{m}$, and similarly the subalgebra generated by the variable generators $y_{1},\dots,y_{m}$ is a copy of the polynomial $\mathbb{Z}$-algebra $\mathbb{Z}[y_{1},\dots,y_{m}]$. Moreover, as an $\mathbb{Z}$-module we have $\mathcal{H}_{m}\cong\mathbb{Z}\mathfrak{S}_{m}\otimes\mathbb{Z}[y_{1},\dots,y_{m}]$. By \emph{\Cref{Lem:JMRelations}}, we have a surjective $\mathbb{Z}$-algebra homomorphism $\mathcal{H}_{m}\rightarrow\mathbb{Z}\mathfrak{S}_{m}$ given by $s_{i}\mapsto s_{i}$ and $y_{j}\mapsto L_{j}$.

We now show that the algebra $\mathsf{FH}_{m}$ contains like-minded elements.

\begin{defn}
For any $i\in[m]$ define elements $Y_{i}\in\mathsf{FH}_{m}$ by
\[ Y_{i}:=K(u_{i})+\sum\limits_{\substack{j\in[m] \\ j<i}}K((j,i))=K(u_{i})+L_{i}, \]
where we have identified $R\mathfrak{S}_{m}$ with $\{K(\pi) \ | \ \pi\in\mathfrak{S}_{m}\subset\Lambda(m)\}$ by \emph{\Cref{Lem:SmSubAlgebra}}.
\end{defn}

\begin{ex}
In $\mathsf{FH}_{3}$ we have
\[ Y_{1}=K((*,1)), \hspace{3mm} Y_{2}=K((*,2))+K((1,2)), \hspace{3mm} Y_{3}=K((*,3))+K((1,3))+K((2,3)), \]
where we have dropped the trivial cycles $(1)$, $(2)$, and $(3)$ from each of the $3$-marked cycle shapes appeaing.
\end{ex}

Since $\mathsf{deg}_{m}(u_{i})=m+1$ we have that $\mathsf{pr}_{m,m}(K(u_{i}))=0$, hence $\mathsf{pr}_{m,m}(Y_{i})=L_{i}$. However, one can deduce that when $n>0$ the elements $\mathsf{pr}_{n+m,m}(Y_{i})$ do not get send to the Jucys-Murphy elements $L_{i}$ of $\mathbb{Z}\mathfrak{S}_{n+m}$ but rather $\sigma_{n}L_{n+i}\sigma_{n}^{-1}$ for any $\sigma_{n}\in\mathfrak{S}_{n+m}$ for which $\sigma_{n}([m])=\{n+1,\dots,n+m\}$. Nonetheless they retain counterparts to the relations of \emph{\Cref{Lem:JMRelations}}:

\begin{lem} \label{Lem:JMRelationsFHm}
The following relations hold within $\mathsf{FH}_{m}$:
\begin{itemize}
\item[(1)] $Y_{i}Y_{j}=Y_{j}Y_{i}$ for all $i,j\in[n]$
\item[(2)] $K(s_{i})Y_{j}=Y_{j}K(s_{i})$ for all $i\in[n-1]$ and $j\neq i,i+1$
\item[(3)] $Y_{i+1}=K(s_{i})Y_{i}K(s_{i})+K(s_{i})$ for all $i\in[n-1]$
\end{itemize}
where $s_{i}=(i,i+1)$ is the simple transposition exchanging $i$ and $i+1$ for all $i\in[n-1]$.
\end{lem}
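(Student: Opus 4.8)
The plan is to verify each of the three relations by projecting to the centraliser algebras $Z_{N,m}$ and invoking \Cref{Lem:FHEqualityByProjection}, which reduces the problem to checking that the images $\mathsf{pr}_{N,m}(Y_i)$ and $\mathsf{pr}_{N,m}(K(s_i))$ satisfy the analogous identities in $\mathbb{Z}\mathfrak{S}_{N}$ for every $N\geq m$. First I would compute these images explicitly. Using \Cref{Lem:SmSubAlgebra} we have $\mathsf{pr}_{N,m}(K(s_i))=s_i=(i,i+1)$, while the $m$-class of $u_i$ consists of the transpositions $(i,a)$ with $a\in[N]\setminus[m]$, so that
\[
\mathsf{pr}_{N,m}(Y_i)=\sum_{j=1}^{i-1}(j,i)+\sum_{a=m+1}^{N}(i,a).
\]
The crucial observation is that this is \emph{not} the Jucys--Murphy element $L_i$, but rather a sum of transpositions all sharing the entry $i$, whose remaining entries range over the set $S_i:=\{1,\dots,i-1\}\cup\{m+1,\dots,N\}$.

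The key step is to recognise $\mathsf{pr}_{N,m}(Y_i)$ and $\mathsf{pr}_{N,m}(K(s_i))$ as conjugates, by a single permutation $\sigma\in\mathfrak{S}_{N}$ that is \emph{independent of} $i$, of the genuine Jucys--Murphy elements and simple transpositions of $\mathbb{Z}\mathfrak{S}_{N}$. Concretely I would take the permutation $\sigma$ of $[N]$ defined by $\sigma(k)=k+m$ for $1\leq k\leq N-m$ and $\sigma(N-m+t)=t$ for $t\in[m]$. One then checks directly that $\sigma(N-m+i)=i$ and $\sigma(\{1,\dots,N-m+i-1\})=S_i$, which yields
\[
\mathsf{pr}_{N,m}(Y_i)=\sigma\,L_{N-m+i}\,\sigma^{-1}\qquad\text{and}\qquad \mathsf{pr}_{N,m}(K(s_i))=\sigma\,s_{N-m+i}\,\sigma^{-1},
\]
the latter valid for $i\in[m-1]$ since then $\sigma(N-m+i+1)=i+1$.

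Since conjugation by $\sigma$ is a $\mathbb{Z}$-algebra automorphism of $\mathbb{Z}\mathfrak{S}_{N}$, each of the three identities among the $\mathsf{pr}_{N,m}(Y_i)$ and $\mathsf{pr}_{N,m}(K(s_i))$ is equivalent, after stripping off $\sigma(-)\sigma^{-1}$, to the corresponding identity among the honest elements $L_{N-m+i}$ and $s_{N-m+i}$; for instance relation $(3)$ becomes $L_{N-m+i+1}=s_{N-m+i}L_{N-m+i}s_{N-m+i}+s_{N-m+i}$. These are precisely items $(1)$--$(3)$ of \Cref{Lem:JMRelations} applied at the shifted indices, which lie in the valid ranges because $i\in[m]$ forces $N-m+i\leq N$ and $i\in[m-1]$ forces $N-m+i\leq N-1$. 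As the relations thus hold after every projection, \Cref{Lem:FHEqualityByProjection} lifts them to $\mathsf{FH}_{m}$.

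The main obstacle is conceptual rather than computational: one must resist identifying $\mathsf{pr}_{N,m}(Y_i)$ with $L_i$, and instead pin down the single reindexing permutation $\sigma$ that simultaneously converts all the $Y_i$ and $K(s_i)$ into standard Jucys--Murphy data at the shifted indices $N-m+i$. Once $\sigma$ is in hand and verified to be independent of $i$, the three relations transport in one stroke and the remainder is routine. I would also point out that the index ranges displayed in the statement (written $[n]$ and $[n-1]$) should read $[m]$ and $[m-1]$, matching the ambient $\mathfrak{S}_{m}$; the argument above respects exactly these ranges.
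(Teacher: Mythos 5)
Your proof is correct, and it is a mild but genuine variant of the paper's argument. The paper uses the same overall device (project via $\mathsf{pr}_{N,m}$ and lift with \Cref{Lem:FHEqualityByProjection}), and for relation $(1)$ it uses exactly your conjugation observation $\mathsf{pr}_{N,m}(Y_i)=\sigma L_{N-m+i}\sigma^{-1}$. Where you diverge is in $(2)$ and $(3)$: the paper handles those by splitting $Y_j=K(u_j)+L_j$ and checking the two summands separately --- for $(2)$ it verifies that $K(s_i)$ commutes with $K(u_j)$ and with $L_j$ individually, and for $(3)$ it establishes the identity $K(s_i)K(u_i)K(s_i)=K(u_{i+1})$ and then invokes $s_iL_is_i=L_{i+1}-s_i$ from \Cref{Lem:JMRelations}. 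You instead observe that the single permutation $\sigma$ (independent of $i$) simultaneously conjugates $s_{N-m+i}$ to $\mathsf{pr}_{N,m}(K(s_i))=s_i$ and $L_{N-m+i}$ to $\mathsf{pr}_{N,m}(Y_i)$, so all three relations transport in one stroke from \Cref{Lem:JMRelations} at the shifted indices; your verification that $\sigma(N-m+i)=i$ and $\sigma(\{1,\dots,N-m+i-1\})=\{1,\dots,i-1\}\cup\{m+1,\dots,N\}$ is exactly what is needed, and the index ranges check out. What your route buys is uniformity and the avoidance of the separate computation $K(s_i)K(u_i)K(s_i)=K(u_{i+1})$; what the paper's route buys is that this last identity is recorded explicitly, which is mildly informative in its own right. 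You are also right that the ranges $[n]$, $[n-1]$ in the statement should read $[m]$, $[m-1]$.
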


\begin{proof}
$(1)$: For any $n\geq 0$ and $i,j\in[m]$ we have
\[ \mathsf{pr}_{n+m,m}(Y_{i}Y_{j})=\sigma_{n}L_{i+m}L_{j+m}\sigma_{n}^{-1}= \sigma_{n}L_{j+m}L_{i+m}\sigma_{n}^{-1}=\mathsf{pr}_{n+m,m}(Y_{j}Y_{i}), \]
since the Jucys-Murphy elements commute. So applying \emph{\Cref{Lem:FHEqualityByProjection}} shows that $Y_{i}Y_{j}=Y_{j}Y_{i}$. $(2)$: It follows as a straight forward application of \emph{\Cref{Lem:FHEqualityByProjection}} that $K(s_{i})$ commutes with $K(u_{j})$ whenever $i\in[n-1]$ and $j\neq i,i+1$, and we already know that $K(s_{i})$ commutes with $L_{j}$, so $(2)$ follows. $(3)$: Again by  \emph{\Cref{Lem:FHEqualityByProjection}} one can show that $K(s_{i})K(u_{i})K(s_{i})=K(u_{i+1})$, and hence
\[ K(s_{i})Y_{i}K(s_{i})=K(u_{i+1})+s_{i}L_{i}s_{i}=K(u_{i})+L_{i+1}-K(s_{i})=Y_{j}-K(s_{i}), \]
where we used $(3)$ of \emph{\Cref{Lem:JMRelations}}. Rearranging gives $(3)$.

\end{proof}

\begin{thm} \label{Thm:FHmIsomorphicm}
We have an isomorphism of $R$-algebras $\phi:R\otimes_{\mathbb{Z}}(\mathcal{H}_{m}\otimes\mathsf{Sym})\rightarrow\mathsf{FH}_{m}$ defined by $s_{i}\mapsto K(s_{i})$, $y_{j}\mapsto Y_{j}$, and $e_{k}\mapsto E_{k}$ for each $i\in[m-1]$, $j\in[m]$, and $k\geq 0$. 
\end{thm}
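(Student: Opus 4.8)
The map $\phi$ is prescribed on the generators $s_{i},y_{j},e_{k}$ of the presented algebra $R\otimes_{\mathbb{Z}}(\mathcal{H}_{m}\otimes\mathsf{Sym})$, so the first task is to verify that the proposed images respect every defining relation, making $\phi$ a well-defined $R$-algebra homomorphism. The braid relations (1i)--(1iii) among the $K(s_{i})$ hold because $R\mathfrak{S}_{m}$ is a subalgebra of $\mathsf{FH}_{m}$ by \Cref{Lem:SmSubAlgebra}; the relations (2i)--(2iii) among the $Y_{j}$ and the mixed relation (2ii) are precisely \Cref{Lem:JMRelationsFHm}; and the $E_{k}$ commute pairwise since they lie in the commutative subalgebra $\mathsf{FH}_{0}$. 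The only relations left are the tensor-commutativity relations, that each $E_{k}$ commutes with every $K(s_{i})$ and $Y_{j}$. For these I will first record that $\mathsf{FH}_{0}$ is central in $\mathsf{FH}_{m}$: under $\mathsf{pr}_{n,m}$ the subalgebra $\mathsf{FH}_{0}=\mathsf{FH}_{m}^{*}$ maps onto $Z(\mathbb{Z}\mathfrak{S}([n]\backslash[m]))$, and since by definition every element of $Z_{n,m}$ centralises $\mathbb{Z}\mathsf{Stab}_{n}(m)=\mathbb{Z}\mathfrak{S}([n]\backslash[m])$, it commutes in particular with the centre of that group algebra; hence $\mathsf{pr}_{n,m}(\mathsf{FH}_{0})$ is central in $Z_{n,m}$ for every $n\geq m$, and \Cref{Lem:FHEqualityByProjection} promotes this to centrality of $\mathsf{FH}_{0}$ in $\mathsf{FH}_{m}$. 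Thus $\phi$ is a well-defined $R$-algebra homomorphism.

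It then remains to prove $\phi$ is bijective. Writing $Y^{\bm{d}}=Y_{1}^{\bm{d}(1)}\cdots Y_{m}^{\bm{d}(m)}$ and letting $M_{\alpha}$ be the image of $m_{\alpha}$ (the element of \Cref{Lem:ElementsMAlpha}), the map $\phi$ sends the $R$-basis $\{\pi y^{\bm{d}}m_{\alpha}\ |\ \pi\in\mathfrak{S}_{m},\ \bm{d}\in\mathbb{Z}_{\geq0}^{[m]},\ \alpha\in\mathcal{P}\}$ of the source (obtained from the basis \Cref{DAHABasis} of $\mathcal{H}_{m}$ and the basis $\{m_{\alpha}\}$ of $\mathsf{Sym}$) to the family $\{K(\pi)Y^{\bm{d}}M_{\alpha}\}$. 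The plan is to show this family is an $R$-basis of $\mathsf{FH}_{m}$. The crucial combinatorial point is that the triples $(\pi,\bm{d},\alpha)$ biject with $\Lambda(m)$ through $(\pi,\bm{d},\alpha)\mapsto \pi u^{\bm{d}}c^{\bm{l}(\alpha)}$, where $c^{\bm{l}(\alpha)}=\overline{\alpha}$ is the all-$*$ part recording $\overline{\alpha}\in\Lambda(0)$; this is exactly the indexing of the basis $\{K(\nu)\ |\ \nu\in\Lambda(m)\}$. Hence it suffices to establish a unitriangular leading-term formula
\[ K(\pi)Y^{\bm{d}}M_{\alpha}=K(\pi u^{\bm{d}}c^{\bm{l}(\alpha)})+(\text{strictly lower terms}), \]
for a suitable order, whence the transition matrix to $\{K(\nu)\}$ is unitriangular with unit diagonal and so invertible over $R$, forcing $\phi$ to be an $R$-algebra isomorphism.

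To build the order I extend the reduced degree to all of $\Lambda(m)$ by setting $\mathsf{rd}(\nu):=||\nu||-(\text{number of nontrivial cycles of any }\sigma\in\mathsf{CL}_{\mathbb{N},m}(\nu))$, the reflection length, which is conjugation-invariant and agrees with the earlier $\mathsf{rd}$ on $\Lambda(0)$, and I order $\Lambda(m)$ lexicographically by the pair $(\mathsf{rd}(\nu),||\nu||^{m})$, refining arbitrarily to a total order. Because reflection length and $m$-support are both subadditive under products, this is a multiplicative filtration: every $K(\nu)$ occurring in $K(\lambda)K(\mu)$ satisfies $(\mathsf{rd}(\nu),||\nu||^{m})\preceq(\mathsf{rd}(\lambda)+\mathsf{rd}(\mu),||\lambda||^{m}+||\mu||^{m})$. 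In the associated graded, a term survives at top bidegree only if $||\nu||^{m}=||\lambda||^{m}+||\mu||^{m}$, which forces $\mathsf{Sup}^{m}(g)\cap\mathsf{Sup}^{m}(h)=\emptyset$ for the underlying $g\in\mathsf{CL}_{\mathbb{N},m}(\lambda)$, $h\in\mathsf{CL}_{\mathbb{N},m}(\mu)$, so that \Cref{Lem:mSupTrivialIntersection} gives $\nu=\lambda\mu$ (with coefficient $c_{\lambda,\mu}$ from \Cref{Prop:LeadingTerm}) provided also $\mathsf{rd}(\lambda\mu)=\mathsf{rd}(\lambda)+\mathsf{rd}(\mu)$. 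Now the symbol of $Y_{i}$ is $K(u_{i})$, since the summands of $L_{i}$ share its reduced degree $1$ but have star count $0$; the symbol of $M_{\alpha}$ is $K(\overline{\alpha})$, since by \Cref{MAlphaLeadingTerm} every other term has either strictly smaller $\mathsf{rd}$, or equal $\mathsf{rd}$ and fewer cycles, hence strictly smaller star count; and reflection length is additive along $\pi\cdot u^{\bm{d}}\cdot\overline{\alpha}$, the $[m]$-part contributing $\mathsf{rd}(\pi)+\sum_{i}\bm{d}(i)$ and the disjoint all-$*$ part contributing $|\alpha|$. All the relevant constants $c_{\lambda,\mu}$ equal $1$ because the factors $K(\pi)$ and $K(u_{i})$ carry no all-$*$ cycles. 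Therefore the symbol of $K(\pi)Y^{\bm{d}}M_{\alpha}$ is exactly $K(\pi u^{\bm{d}}c^{\bm{l}(\alpha)})$, which is the desired leading-term formula.

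The hard part will be the bigraded leading-term analysis of the previous paragraph. The factor $Y^{\bm{d}}$ is naturally filtered by the star count $||\cdot||^{m}$ (equivalently $\mathsf{deg}_{m}$), whereas $M_{\alpha}$ is naturally filtered by reduced degree, and these two filtrations genuinely disagree, so neither alone renders $\{K(\pi)Y^{\bm{d}}M_{\alpha}\}$ triangular. Reconciling them through the single lexicographic order $(\mathsf{rd},||\cdot||^{m})$, and verifying that its associated graded product is controlled by the monoid product of $\Lambda(m)$ via \Cref{Lem:mSupTrivialIntersection} and \Cref{Prop:LeadingTerm} with all leading coefficients equal to $1$, is the technical heart of the proof; the remaining steps are routine bookkeeping with the bases \Cref{DAHABasis}, $\{m_{\alpha}\}$, and $\{K(\nu)\}$.
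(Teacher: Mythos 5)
Your proposal is correct and follows essentially the same route as the paper: the homomorphism check rests on \Cref{Lem:SmSubAlgebra}, \Cref{Lem:JMRelationsFHm} and the centrality of $\mathsf{FH}_{0}$ in $\mathsf{FH}_{m}$ (your centraliser argument for the latter is a slightly cleaner packaging of the paper's case-by-case check), and bijectivity comes from the unitriangular expansion of $K(\pi)Y_{1}^{\bm{d}(1)}\cdots Y_{m}^{\bm{d}(m)}M_{\alpha}$ against the basis $\{K(\nu)\}$ using exactly the paper's ingredients, namely \Cref{Prop:LeadingTerm}, \Cref{Lem:mSupTrivialIntersection} and \Cref{MAlphaLeadingTerm}. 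The only real difference is organisational: the paper filters by $\mathsf{deg}_{m}$ first (getting surjectivity by induction on degree and injectivity from a further refinement by $\mathsf{rd}$ and cycle count of the all-$*$ part), whereas you use a single lexicographic order with reflection length first and star count second --- which is indeed compatible with the product and additive along the factorisation $\pi\cdot u^{\bm{d}}\cdot\overline{\alpha}$ --- so that one triangularity statement delivers both injectivity and surjectivity.
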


\begin{proof}
To prove $\phi$ is a homomorphism it suffices to show that the relations of \emph{\Cref{Defn:DAHA}} are respected, and that the elements $\phi(e_{k})$ commute with $\mathsf{Im}(\phi)$. Relations $(1)$ of \emph{\Cref{Defn:DAHA}} are respected by \emph{\Cref{Lem:SmSubAlgebra}}, and relations $(2)$ are upheld by \emph{\Cref{Lem:JMRelationsFHm}}. Also $\phi(e_{k})=E_{k}\in\mathsf{FH}_{0}\subset\mathsf{FH}_{m}$, hence the images $\phi(e_{1}),\phi(e_{2}),\dots$ commute with one another since $\mathsf{FH}_{0}$ is commutative. Moreover, since they belong to $\mathsf{FH}_{0}$, their projections down to $Z_{n,m}$ by $\mathsf{pr}_{n,m}$ consist of permutations which act trivially on $[m]$. Therefore $\phi(e_{k})$ commutes with $\phi(s_{i})=K(s_{i})$ for any $k\geq 0$ and $i\in[m-1]$. Lastly we have that
\[ [\phi(e_{k}),\phi(y_{j})]=[\phi(e_{k}),Y_{j}]=[\phi(e_{k}),K(u_{j})]. \]
We have that $\mathsf{pr}_{n,m}(K(u_{j}))$ is the sum of transpositions $(a,j)$ for all $a\in[n]\backslash [m]$, which commutes with any permutation which fixes the elements $[m]$. In particular $\mathsf{pr}_{n,m}(K(u_{j}))$ must commute with $\mathsf{pr}_{n,m}(\phi(e_{k}))$, and so by \emph{\Cref{Lem:FHEqualityByProjection}} we have that $K(u_{j})$ commutes with $\phi(e_{k})$. Hence $[\phi(e_{k}),\phi(y_{j})]=0$, and so $\phi(e_{k})$ commutes with $\mathsf{Im}(\phi)$. Thus $\phi$ is an $R$-algebra homomorphism. For surjectivity we show $K(\lambda)\in\mathsf{Im}(\phi)$ for any $\lambda\in\Lambda(m)$. Write $\lambda=\pi u^{\bm{d}}c^{\bm{l}}$ and consider the element $K(c^{\bm{l}})$ which belongs to the $R$-subalgebra $\mathsf{FH}_{0}$ of $\mathsf{FH}_{m}$. By \emph{\Cref{Thm:FHGenertors}} their exists $C\in\mathsf{Sym}=\mathbb{Z}[e_{1},e_{2}\dots]$ such that $\phi(C)=K(c^{\bm{l}})$. Then by employing the leading term result of \emph{\Cref{Prop:LeadingTerm}}, we have that
\[ \phi\left(\pi y_{1}^{\bm{d}(1)}\cdots y_{m}^{\bm{d}(m)} C\right) = K(\pi)Y_{1}^{\bm{d}(1)}\cdots Y_{m}^{\bm{d}(m)}K(c^{\bm{l}})= K(\pi u^{\bm{d}}c^{\bm{l}})+T \]
where $T$ stands for an $R$-linear combination of terms $K(\mu)$ such that $\mathsf{deg}_{m}(\mu)<\mathsf{deg}_{m}(\lambda)$. Hence arguing by induction on the degree of $\lambda$ shows surjectivity, noting that the base case is immediate since the basis elements of degree zero are precisely $K(\pi)=\phi(\pi)$ for some $\pi\in\mathfrak{S}_{m}\subset\Lambda(m)$. For injectivity, by \emph{\Cref{DAHABasis}} and recalling that the monomial symmetric functions form a $\mathbb{Z}$-basis of $\mathsf{Sym}$, we have that the set
\[ \mathsf{B}:=\left\{\pi y_{1}^{\bm{d}(1)}\cdots y_{m}^{\bm{d}(m)}\otimes m_{\alpha} \ | \ \pi\in\mathfrak{S}_{m}, \bm{d}\in\mathbb{Z}_{\geq 0}^{[m]}, \alpha\in\mathcal{P}\right\} \]
forms an $R$-basis of $R\otimes_{\mathbb{Z}}(\mathcal{H}_{m}\otimes\mathsf{Sym})$. We seek to show that $\phi(\mathsf{B})$ is $R$-linearly independent. Equip the basis set $\{K(\pi u^{\bm{d}}c^{\bm{l}}) \ | \ \pi\in\mathfrak{S}_{m}, \bm{d}\in\mathbb{Z}_{\geq 0}^{[m]}, \bm{l}\in\mathbb{Z}_{\geq 0}^{\mathbb{N}}\}$ of $\mathsf{FH}_{m}$ with the partial order $<$ define by $K(\sigma u^{\bm{e}}c^{\bm{k}})<K(\pi u^{\bm{d}}c^{\bm{l}})$ if $(i)$ the degree of $\sigma u^{\bm{e}}c^{\bm{k}}$ is strictly less than that of $\pi u^{\bm{d}}c^{\bm{l}}$, $(ii)$ their degrees agree but $\mathsf{rd}(c^{\bm{k}})<\mathsf{rd}(c^{\bm{l}})$, $(iii)$ their degrees agree and $\mathsf{rd}(c^{\bm{k}})=\mathsf{rd}(c^{\bm{l}})$ but $c^{\bm{k}}$ contains less cycles than $c^{\bm{l}}$. Note \emph{\Cref{Prop:LeadingTerm}} tells us that the product of $K(\lambda)$ and $K(\mu)$ in $\mathsf{FH}_{m}$ results in $c_{\lambda,\mu}K(\lambda\mu)$ plus addition terms all lower in the order $<$ where $c_{\lambda,\mu}\in\mathbb{N}$. Recall that $c_{\lambda,\mu}=1$ whenever $\lambda$ and $\mu$ share no cycles of the same size which contain only the symbol $*$. From \emph{\Cref{MAlphaLeadingTerm}} and \emph{\Cref{Prop:LeadingTerm}} we have that $\phi$ acts on an element of $\mathsf{B}$ by
\[ \phi\left(\pi y_{1}^{\bm{d}(1)}\cdots y_{m}^{\bm{d}(m)}\otimes m_{\alpha}\right)=K(\pi)Y_{1}^{\bm{d}(1)}\cdots Y_{m}^{\bm{d}(m)}M_{\alpha}=K(\pi u^{\bm{d}}\overline{\alpha})+T \]
where $T$ is an $R$-linear combination of basis elements $K(\lambda)$ of $\mathsf{FH}_{m}$ which are strictly less with respect to $<$. As such, in the image of any finite $R$-linear combinaition of elements of $\mathsf{B}$ under $\phi$, we may pick out a non-zero term which is incomparable or strictly greater than any other term with respect to $<$, showing that $\phi(\mathsf{B})$ is $R$-linearly independent in $\mathsf{FH}_{m}$, which proves that $\phi$ is also injective.

\end{proof}

\begin{rmk}
The $\mathbb{C}$-algebra $\mathbb{C}\otimes_{\mathbb{Z}}(\mathcal{H}_{m}\otimes\mathsf{Sym})$ and close variations have made appearences within the literature. In \cite{MO01}, they aimed to give a centraliser construction for the degenerate affine Hecke algebra in a manner comparable to how the Yangians arise from a projective limit of universal enveloping algebras of $\mathfrak{gl}_{n}$. No such projective system exists for the group algebras of the symmetric groups, so instead they work with the larger semigroup of partial permutations. Algebras $A_{m}:=A_{0}\otimes\tilde{\mathcal{H}}_{m}$ were constructed, where $\tilde{\mathcal{H}}_{m}$ is a degenerate affine counterpart to the semigroup algebra of partial permutations, and $A_{0}$ was shown to be isomorphic to the algebra of shifted symmetric functions. In our setting the lack of a projective system was sidestepped by employing the techniques of Farahat and Higman on the centraliser algebras $Z_{n,m}$, which allowed us to stay working with the symmetric group itself. Also $\mathbb{C}\otimes_{\mathbb{Z}}(\mathcal{H}_{m}\otimes\mathsf{Sym})$ appears in the Heisenberg category $\mathsf{Heis}$ of M. Khovanov defined in \cite{Kho14}. Such a category is a monoidal category generated by two objects $\uparrow$ and $\downarrow$, and where the morphism spaces are defined diagrammatically. In \cite[Proposition 4]{Kho14} it was shown that the endomorphism algebra $\mathsf{End}_{\mathsf{Heis}}(\uparrow^{\otimes m})$ is isomorphic to $\mathbb{C}\otimes_{\mathbb{Z}}(\mathcal{H}_{m}\otimes\mathsf{Sym})$.
\end{rmk}

We collect some consequences of the above theorem.

\begin{cor}
The algebra $\mathsf{FH}_{m}$ is generated by $K(s_{i})$, $Y_{j}$, and $E_{k}$ for $i\in[m-1]$, $j\in[m]$, and $k\geq 0$.
\end{cor}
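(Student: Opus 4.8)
The plan is to read this off directly from the isomorphism $\phi$ established in \Cref{Thm:FHmIsomorphicm}, since generators transport across an algebra isomorphism. The only genuine content is identifying a generating set for the source algebra $R\otimes_{\mathbb{Z}}(\mathcal{H}_{m}\otimes\mathsf{Sym})$ and checking that $\phi$ carries it to the claimed set.

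First I would observe that the degenerate affine Hecke algebra $\mathcal{H}_{m}$ is by definition (\Cref{Defn:DAHA}) generated as a $\mathbb{Z}$-algebra by $\{s_{i},y_{j} \mid i\in[m-1],\ j\in[m]\}$. Next, from \emph{Section 7} we have $\mathsf{Sym}\cong\mathbb{Z}[e_{1},e_{2},\dots]$, so $\mathsf{Sym}$ is generated as a $\mathbb{Z}$-algebra by the elementary symmetric functions $e_{k}$ for $k\geq 0$ (with $e_{0}=1$). Consequently the tensor product $\mathcal{H}_{m}\otimes\mathsf{Sym}$ is generated as a $\mathbb{Z}$-algebra by the elements $s_{i}\otimes 1$, $y_{j}\otimes 1$, and $1\otimes e_{k}$, and after extending scalars to $R$ the same elements generate $R\otimes_{\mathbb{Z}}(\mathcal{H}_{m}\otimes\mathsf{Sym})$ as an $R$-algebra.

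Finally I would invoke that $\phi$ is an $R$-algebra isomorphism, so it maps any generating set of the source to a generating set of the target. Since $\phi$ sends $s_{i}\mapsto K(s_{i})$, $y_{j}\mapsto Y_{j}$, and $e_{k}\mapsto E_{k}$, the set $\{K(s_{i}),Y_{j},E_{k} \mid i\in[m-1],\ j\in[m],\ k\geq 0\}$ generates $\mathsf{FH}_{m}$ as an $R$-algebra, which is exactly the assertion.

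There is no real obstacle here: the corollary is a formal consequence of the fact that isomorphisms preserve generation, and the only thing requiring a sentence of justification is that each factor of $\mathcal{H}_{m}\otimes\mathsf{Sym}$ has an explicit generating set, both of which are already recorded earlier in the paper. One might alternatively note that surjectivity of $\phi$ alone suffices (one does not even need injectivity), since the image of a generating set under a surjective homomorphism generates the codomain; indeed the surjectivity argument inside the proof of \Cref{Thm:FHmIsomorphicm} already exhibits every $K(\lambda)$ as a polynomial in the $K(s_{i})$, $Y_{j}$, and $E_{k}$, so the corollary could also be extracted directly from that computation.
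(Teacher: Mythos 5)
Your proposal is correct and matches the paper's intent: the corollary is stated without proof precisely because it follows formally from \Cref{Thm:FHmIsomorphicm}, with the surjectivity argument in that theorem's proof already exhibiting each $K(\lambda)$ as a polynomial in $K(s_{i})$, $Y_{j}$, and $E_{k}$. Your observation that surjectivity alone suffices is also accurate.
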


\begin{cor}
The algebra $\mathsf{FH}_{m}$ has an $R$-basis given by the set
\[ \left\{ K(\pi)Y_{1}^{\bm{d}(1)}\cdots Y_{m}^{\bm{d}(m)}M_{\alpha} \ | \ \pi\in\mathfrak{S}_{m}, \bm{d}\in\mathbb{Z}_{\geq 0}^{[m]}, \alpha\in\mathcal{P}\right\}, \]
where the associated structure constants belong to $\mathbb{Z}$.
\end{cor}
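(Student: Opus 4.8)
The plan is to read the statement off directly from the isomorphism $\phi$ of \Cref{Thm:FHmIsomorphicm} together with the integral structure already present on its source. First I would observe that, by the definition of $\phi$ and of $M_{\alpha}$ in \Cref{Lem:ElementsMAlpha}, the restriction of $\phi$ to the symmetric-function factor sends $m_{\alpha}$ to $M_{\alpha}$: since $m_{\alpha}$ is a fixed integral polynomial in $e_{1},e_{2},\dots$ and $\phi(e_{k})=E_{k}$, applying $\phi$ to that polynomial yields the same integral polynomial in the $E_{k}$, which is exactly $M_{\alpha}$. Consequently
\[ \phi\!\left(\pi y_{1}^{\bm{d}(1)}\cdots y_{m}^{\bm{d}(m)}\otimes m_{\alpha}\right)=K(\pi)Y_{1}^{\bm{d}(1)}\cdots Y_{m}^{\bm{d}(m)}M_{\alpha}, \]
so $\phi$ carries the $R$-basis $\mathsf{B}$ of $R\otimes_{\mathbb{Z}}(\mathcal{H}_{m}\otimes\mathsf{Sym})$ exhibited in the proof of \Cref{Thm:FHmIsomorphicm} precisely onto the proposed set. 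As $\phi$ is an $R$-algebra isomorphism it carries an $R$-basis to an $R$-basis, which establishes the first assertion.

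For the integrality of the structure constants I would pass to the $\mathbb{Z}$-form. The untensored algebra $\mathcal{H}_{m}\otimes_{\mathbb{Z}}\mathsf{Sym}$ is a $\mathbb{Z}$-algebra whose $\mathbb{Z}$-basis $\{\pi y^{\bm{d}}\otimes m_{\alpha}\}$ has structure constants in $\mathbb{Z}$: the basis \Cref{DAHABasis} multiplies with integer structure constants in $\mathcal{H}_{m}$, the monomial symmetric functions multiply with integer structure constants in $\mathsf{Sym}$, and these combine integrally in the tensor product. Base change along $\mathbb{Z}\hookrightarrow R$ leaves these constants untouched, since they involve no $t$; hence $\mathsf{B}$ multiplies with the same integer structure constants inside $R\otimes_{\mathbb{Z}}(\mathcal{H}_{m}\otimes\mathsf{Sym})$.

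Finally I would transport this integrality through $\phi$. Being an algebra homomorphism, $\phi$ restricts to a $\mathbb{Z}$-algebra isomorphism from the $\mathbb{Z}$-span of $\mathsf{B}$ onto the $\mathbb{Z}$-span $\mathcal{F}$ of $\phi(\mathsf{B})$; thus the product of any two elements of $\phi(\mathsf{B})$ already lies in $\mathcal{F}$ and expands in $\phi(\mathsf{B})$ with exactly the integer coefficients inherited from $\mathcal{H}_{m}\otimes\mathsf{Sym}$. Because $\phi(\mathsf{B})$ is at the same time an $R$-basis of all of $\mathsf{FH}_{m}$, these integers are the structure constants of $\mathsf{FH}_{m}$ in this basis. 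The one point needing care — and what I would flag as the main obstacle — is precisely this integrality bookkeeping: one must keep the $\mathbb{Z}$-form and the ambient $R$-algebra separate and confirm that $\phi$ is defined over $\mathbb{Z}$ on generators (each of $K(s_{i})$, $Y_{j}=K(u_{j})+L_{j}$, and $E_{k}$ lies in the integrally generated $\mathbb{Z}$-subalgebra, with no denominators from $R$), so that no non-constant polynomial of $R$ can enter a structure constant in this basis, even though the constants in the $K(\lambda)$ basis genuinely are non-constant polynomials.
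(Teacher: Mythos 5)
Your argument is correct and is exactly the intended derivation: the paper states this corollary without proof as an immediate consequence of \emph{\Cref{Thm:FHmIsomorphicm}}, whose proof already exhibits the identity $\phi\bigl(\pi y_{1}^{\bm{d}(1)}\cdots y_{m}^{\bm{d}(m)}\otimes m_{\alpha}\bigr)=K(\pi)Y_{1}^{\bm{d}(1)}\cdots Y_{m}^{\bm{d}(m)}M_{\alpha}$, so the set in question is the image of the $R$-basis $\mathsf{B}$ under the isomorphism. Your extra care with the $\mathbb{Z}$-form to justify integrality of the structure constants is a correct and welcome filling-in of a detail the paper leaves implicit.
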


\begin{cor} \label{Cor:CenterFHm}
The center of $\mathsf{FH}_{m}$ is $\mathsf{Sym}[Y_{1},\dots,Y_{m}]\otimes\mathsf{FH}_{0}$, which is generated by the elements $E_{k}$ and the elementary symmetric polynomials $e_{k}(Y_{1},\dots,Y_{m})$ for all $k\geq 0$.
\end{cor}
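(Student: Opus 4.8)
The plan is to transport the computation across the isomorphism $\phi$ of \Cref{Thm:FHmIsomorphicm}. Since $\phi$ is an isomorphism of $R$-algebras it identifies centers, so $Z(\mathsf{FH}_{m})=\phi\bigl(Z(R\otimes_{\mathbb{Z}}(\mathcal{H}_{m}\otimes\mathsf{Sym}))\bigr)$, and it suffices to compute the center of $R\otimes_{\mathbb{Z}}(\mathcal{H}_{m}\otimes\mathsf{Sym})$ and then record the images under $\phi$ of the generators of that center.

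I would compute this center in three reductions. Set $S:=\mathcal{H}_{m}\otimes\mathsf{Sym}$, a $\mathbb{Z}$-algebra that is free as a $\mathbb{Z}$-module (by \eqref{DAHABasis} for $\mathcal{H}_{m}$ and the monomial basis for $\mathsf{Sym}$). First, the ring $R=\mathsf{Int}[t]$ is free over $\mathbb{Z}$ (for instance on the binomial basis $\binom{t}{k}$), hence flat; as being central is the vanishing of the $\mathbb{Z}$-linear commutator maps $x\mapsto[x,g]$ against a generating set $g$ of $S$, the center is a kernel of a map of free $\mathbb{Z}$-modules and is preserved by flat base change, giving $Z(R\otimes_{\mathbb{Z}}S)=R\otimes_{\mathbb{Z}}Z(S)$. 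Second, I compute $Z(S)=Z(\mathcal{H}_{m})\otimes\mathsf{Sym}$: writing an element in a $\mathbb{Z}$-basis of $\mathsf{Sym}$, so that $\mathcal{H}_{m}\otimes\mathsf{Sym}$ is the corresponding free direct sum of copies of $\mathcal{H}_{m}$, commuting with $\mathcal{H}_{m}\otimes 1$ forces every coefficient into $Z(\mathcal{H}_{m})$, while commuting with $1\otimes\mathsf{Sym}$ is automatic since $\mathsf{Sym}$ is commutative. Third, I invoke the description of the center of the degenerate affine Hecke algebra, $Z(\mathcal{H}_{m})=\mathbb{Z}[y_{1},\dots,y_{m}]^{\mathfrak{S}_{m}}$, the ring of symmetric polynomials in $y_{1},\dots,y_{m}$.

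Combining these gives $Z(\mathsf{FH}_{m})=\phi\bigl(R\otimes_{\mathbb{Z}}(\mathbb{Z}[y_{1},\dots,y_{m}]^{\mathfrak{S}_{m}}\otimes\mathsf{Sym})\bigr)$. Since $\phi(y_{j})=Y_{j}$, the first tensor factor maps onto the $R$-subalgebra $\mathsf{Sym}[Y_{1},\dots,Y_{m}]$ of symmetric polynomials in $Y_{1},\dots,Y_{m}$, which is generated by the $e_{k}(Y_{1},\dots,Y_{m})$; since $\phi(e_{k})=E_{k}$, the factor $\mathsf{Sym}$ maps onto $\mathsf{FH}_{0}$, generated by the $E_{k}$ via \Cref{Thm:FHGenertors}. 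As these two subalgebras commute and their product realises the image, I obtain $Z(\mathsf{FH}_{m})=\mathsf{Sym}[Y_{1},\dots,Y_{m}]\otimes\mathsf{FH}_{0}$ with the stated generators.

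The main obstacle is the third reduction: the center of $\mathcal{H}_{m}$ must be controlled over $\mathbb{Z}$ rather than over a field, so I would either cite an integral version (e.g.\ via \cite{K05}) or prove it directly from the basis \eqref{DAHABasis}. The containment $\mathbb{Z}[y_{1},\dots,y_{m}]^{\mathfrak{S}_{m}}\subseteq Z(\mathcal{H}_{m})$ is easy: such polynomials commute with each $y_{j}$ by relation (2i), while relations (2ii) and (2iii) yield the standard straightening identity $s_{i}f=(s_{i}\cdot f)\,s_{i}+\partial_{i}(f)$ for $f\in\mathbb{Z}[y_{1},\dots,y_{m}]$, where $s_{i}\cdot f$ denotes $f$ with $y_{i},y_{i+1}$ interchanged and $\partial_{i}$ is the $i$-th divided difference; for symmetric $f$ one has $s_{i}\cdot f=f$ and $\partial_{i}(f)=0$, so $f$ commutes with every $s_{i}$ and is central. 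The reverse containment is the delicate part, requiring one to expand a central element in \eqref{DAHABasis}, commute first with the $y_{j}$ (forcing the permutation part to be trivial) and then with the $s_{i}$ (forcing symmetry), via a triangularity argument in the $y$-degree that must be kept integral. A secondary point, already noted, is to confirm the freeness of $\mathcal{H}_{m}$ and $\mathsf{Sym}$ over $\mathbb{Z}$ underpinning the coefficient-reading steps of the first two reductions.
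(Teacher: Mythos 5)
Your proposal is correct and follows essentially the same route as the paper: transport the problem across the isomorphism $\phi$ of \emph{\Cref{Thm:FHmIsomorphicm}} and reduce to the fact that $Z(\mathcal{H}_{m})=\mathbb{Z}[y_{1},\dots,y_{m}]^{\mathfrak{S}_{m}}$, which the paper simply cites from \cite[Theorem 3.3.1]{K05}. The only difference is that you spell out the intermediate reductions $Z(R\otimes_{\mathbb{Z}}S)=R\otimes_{\mathbb{Z}}Z(S)$ and $Z(\mathcal{H}_{m}\otimes\mathsf{Sym})=Z(\mathcal{H}_{m})\otimes\mathsf{Sym}$, which the paper treats as clear.
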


\begin{proof}
By \emph{\Cref{Thm:FHmIsomorphicm}} it is clear that the result follows if $\mathsf{Sym}[y_{1},\dots,y_{m}]$ is the center of $\mathcal{H}_{m}$, which is shown in \cite[Theorem 3.3.1]{K05}.

\end{proof}

For any $m, n\in\mathbb{Z}_{\geq 0}$, let $\sigma_{n,m}\in\mathfrak{S}_{n+m}$ be the permutation given by the product of transpositions $(i,n+i)$ for each $i\in[m]$. In otherwords, in one-line notation we have that
\[ \sigma_{n,m}=(n+1)(n+2)\cdots(n+m)12\cdots m. \]
Let $(-)^{\sigma_{n,m}}:\mathbb{Z}\mathfrak{S}_{n+m}\rightarrow\mathbb{Z}\mathfrak{S}_{n+m}$ denote the $\mathbb{Z}$-linear extension of conjugation by $\sigma_{n,m}$. Let $\mathfrak{S}_{m}'$ denote the subgroup of $\mathfrak{S}_{n+m}$ of permutations on the set $\{n+1,\dots,n+m\}$. Then restricting  $(-)^{\sigma_{n,m}}$ to $Z_{n+m,m}$ yeilds an isomorphism between $Z_{n+m,m}$ and $(\mathbb{Z}\mathfrak{S}_{n+m})^{\mathfrak{S}_{m}'}:=\{z\in\mathbb{Z}\mathfrak{S}_{n+m} \ | \ za=az \text{ for all } a\in\mathfrak{S}_{m}'\}$. Recall that for any $i\in[m]$ we have that $\mathsf{pr}_{n+m,m}(Y_{i})=\sigma_{n,m}L_{i+m}\sigma_{n,m}^{-1}$, so twisting by $(-)^{\sigma_{n,m}}$ gives an epimorphism
\[ (-)^{\sigma_{n,m}}\circ\mathsf{pr}_{n+m,m}:\mathsf{FH}_{m}\rightarrow(\mathbb{Z}\mathfrak{S}_{n+m})^{\mathfrak{S}_{m}'}, \]
sending $Y_{i}\mapsto L_{n+i}$ and $E_{k}\mapsto e_{k}(L_{1},\dots,L_{n})$ where $L_{1},\dots,L_{n+m}$ are the Jucys-Murphy elements of $\mathbb{Z}\mathfrak{S}_{n+m}$. Therefore, feeding the above results through this epimorphism gives a uniform and alternative manner of proving the following:

\begin{cor}
The algebra $(\mathbb{Z}\mathfrak{S}_{n+m})^{\mathfrak{S}_{m}'}$ is generated by $s_{i}$, $L_{n+j}$, and $e_{k}(L_{1},\dots,L_{n})$ for $i\in[m-1]$, $j\in[m]$, and $n\geq k\geq 0$.
\end{cor}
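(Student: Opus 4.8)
The plan is to deduce the corollary directly from the generation result for $\mathsf{FH}_{m}$ (the first corollary following \Cref{Thm:FHmIsomorphicm}, which asserts that $\mathsf{FH}_{m}$ is generated as an $R$-algebra by $K(s_{i})$, $Y_{j}$, and $E_{k}$ for $i\in[m-1]$, $j\in[m]$, $k\geq 0$) together with the surjectivity of the twisted projection $\Phi:=(-)^{\sigma_{n,m}}\circ\mathsf{pr}_{n+m,m}\colon\mathsf{FH}_{m}\to(\mathbb{Z}\mathfrak{S}_{n+m})^{\mathfrak{S}_{m}'}$ established in the discussion preceding the statement. The only general fact I need is that the image of an $R$-algebra generating set under a surjective $R$-algebra homomorphism is an $R$-algebra generating set of the target: since $\Phi$ preserves sums, products, and $R$-scalars, we have $\Phi\bigl(\langle X\rangle\bigr)=\langle\Phi(X)\rangle$ for any subset $X$, and surjectivity forces this to be all of the target. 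Thus it suffices to push the generating set of $\mathsf{FH}_{m}$ through $\Phi$ and read off the images.

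First I would record the images of the three families under $\Phi$. The images of $Y_{j}$ and $E_{k}$ are exactly those already computed just before the statement, namely $\Phi(Y_{j})=L_{n+j}$ and $\Phi(E_{k})=e_{k}(L_{1},\dots,L_{n})$. For $K(s_{i})$ one computes $\mathsf{pr}_{n+m,m}(K(s_{i}))=s_{i}$, since the $m$-class $\mathsf{CL}_{n+m,m}(s_{i})$ is a singleton as $s_{i}\in\mathfrak{S}_{m}\subset\Lambda(m)$ contains no symbol $*$; applying the inner automorphism $(-)^{\sigma_{n,m}}$ then carries this to the simple transposition denoted $s_{i}$ in the target. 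Granting the surjectivity of $\Phi$, this already shows that $(\mathbb{Z}\mathfrak{S}_{n+m})^{\mathfrak{S}_{m}'}$ is generated by the (a priori infinite) family $\{s_{i},L_{n+j},e_{k}(L_{1},\dots,L_{n})\mid i\in[m-1],\,j\in[m],\,k\geq 0\}$.

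It then remains to trim the family $\{e_{k}(L_{1},\dots,L_{n})\}_{k\geq 0}$ down to the range $0\leq k\leq n$, which I regard as the single point requiring a small argument. This is immediate once one recalls that $e_{k}$ is the $k$-th elementary symmetric polynomial in the $n$ variables $L_{1},\dots,L_{n}$, so $e_{k}(L_{1},\dots,L_{n})=0$ for every $k>n$ (while $e_{0}=1$ contributes only the identity); hence no generator with $k>n$ adds anything, and discarding these leaves exactly the finite generating set $\{s_{i},L_{n+j},e_{k}(L_{1},\dots,L_{n})\mid i\in[m-1],\,j\in[m],\,n\geq k\geq 0\}$ claimed in the statement. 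The whole proof is therefore a formal consequence of the isomorphism $\phi$ of \Cref{Thm:FHmIsomorphicm} (which underlies the generation result) transported along the surjection $\Phi$; I do not anticipate any genuine obstacle beyond carefully matching each generator of $\mathsf{FH}_{m}$ with its image and justifying the truncation of the index $k$.
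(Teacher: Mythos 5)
Your proposal is correct and follows exactly the route the paper intends: the paper derives this corollary by ``feeding'' the generation result for $\mathsf{FH}_{m}$ through the epimorphism $(-)^{\sigma_{n,m}}\circ\mathsf{pr}_{n+m,m}$, which is precisely your argument, including the observation that $e_{k}(L_{1},\dots,L_{n})=0$ for $k>n$ to truncate the index set. No issues.
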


\begin{cor}
The algebra $(\mathbb{Z}\mathfrak{S}_{n+m})^{\mathfrak{S}_{m}'}$ has an basis given by the set
\[ \left\{ \pi L_{n+1}^{\bm{d}(1)}\cdots L_{n+m}^{\bm{d}(m)}m_{\alpha}(L_{1},\dots,L_{n}) \ | \ \pi u^{\bm{d}}\overline{\alpha}\in\Lambda_{\leq n}(m) \right\}. \]
\end{cor}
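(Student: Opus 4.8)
The plan is to transport the $R$-basis $\{K(\pi)Y_{1}^{\bm{d}(1)}\cdots Y_{m}^{\bm{d}(m)}M_{\alpha}\}$ of $\mathsf{FH}_{m}$ established above along the surjective $R$-algebra epimorphism $\Theta:=(-)^{\sigma_{n,m}}\circ\mathsf{pr}_{n+m,m}\colon\mathsf{FH}_{m}\to(\mathbb{Z}\mathfrak{S}_{n+m})^{\mathfrak{S}_{m}'}$. First I would record the images of these basis elements. Since $\Theta$ is an algebra homomorphism with $\Theta(Y_{i})=L_{n+i}$ and $\Theta(E_{k})=e_{k}(L_{1},\dots,L_{n})$, and since $M_{\alpha}$ is by construction the same polynomial in the $E_{k}$ that $m_{\alpha}$ is in the $e_{k}$, we obtain $\Theta(M_{\alpha})=m_{\alpha}(L_{1},\dots,L_{n})$; together with $\Theta(K(\pi))=\pi$ (the image of $\pi\in\mathfrak{S}_{m}$ under $\sigma_{n,m}$-conjugation, an element of $\mathfrak{S}_{m}'$) this shows that each basis element maps exactly to the corresponding $\pi L_{n+1}^{\bm{d}(1)}\cdots L_{n+m}^{\bm{d}(m)}m_{\alpha}(L_{1},\dots,L_{n})$. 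Surjectivity of $\Theta$ then already gives that these elements span $(\mathbb{Z}\mathfrak{S}_{n+m})^{\mathfrak{S}_{m}'}$, so the remaining work is to decide which images are non-zero and to prove their linear independence.

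For this I would invoke the leading-term analysis from the proof of \Cref{Thm:FHmIsomorphicm}: combining \Cref{Prop:LeadingTerm} with \Cref{MAlphaLeadingTerm} yields, in $\mathsf{FH}_{m}$, the expansion $K(\pi)Y_{1}^{\bm{d}(1)}\cdots Y_{m}^{\bm{d}(m)}M_{\alpha}=K(\pi u^{\bm{d}}\overline{\alpha})+T$, where the leading coefficient is exactly $1$ (the factors $\pi$, the $u_{i}$, and $\overline{\alpha}$ share no $*$-only cycle of a common length, so $c_{\lambda,\mu}=1$ at every step) and $T$ is an $R$-linear combination of basis elements $K(\nu)$ with $\nu$ strictly below $\pi u^{\bm{d}}\overline{\alpha}$ in the partial order $<$ of that proof. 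Applying $\Theta$ and using that $\Theta(K(\nu))=K_{n+m}(\nu)^{\sigma_{n,m}}$ is the relabelled class-sum when $\mathsf{deg}_{m}(\nu)\le n+m$ and is zero otherwise (by \Cref{Prop:mClassInequality}), gives
\[ \Theta\!\left(K(\pi)Y_{1}^{\bm{d}(1)}\cdots Y_{m}^{\bm{d}(m)}M_{\alpha}\right)=K_{n+m}(\pi u^{\bm{d}}\overline{\alpha})^{\sigma_{n,m}}+T', \]
with $T'$ an integral combination of strictly lower class-sums. In particular the image is non-zero precisely when the leading shape survives the projection, i.e.\ when $\mathsf{deg}_{m}(\pi u^{\bm{d}}\overline{\alpha})\le n+m$ (equivalently $\pi u^{\bm{d}}\overline{\alpha}$ has at most $n$ symbols $*$); these are the shapes indexing the desired basis.

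Finally I would establish the basis property by unitriangularity. Every $\nu\in\Lambda(m)$ is written uniquely as $\pi u^{\bm{d}}\overline{\alpha}$ — the permutation part $\pi$, the $u$-exponents $\bm{d}$, and the bijection $\overline{(-)}\colon\mathcal{P}\to\Lambda(0)$ recovering $\alpha$ from the $c$-part — so as $(\pi,\bm{d},\alpha)$ ranges over the surviving indices the leading terms $K_{n+m}(\pi u^{\bm{d}}\overline{\alpha})^{\sigma_{n,m}}$ run bijectively over the class-sum $\mathbb{Z}$-basis of $(\mathbb{Z}\mathfrak{S}_{n+m})^{\mathfrak{S}_{m}'}$. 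Since each image equals its leading class-sum plus strictly lower class-sums with integer coefficients, the change-of-basis matrix to the class-sum basis is unitriangular with respect to $<$, hence invertible over $\mathbb{Z}$; this gives independence, and with the spanning already noted, the basis claim. I expect the main obstacle to be exactly this last alignment: one must verify that $\Theta$ respects the filtration finely enough that the leading term survives with coefficient $1$ while every lower-order contribution of $T$ maps into the span of \emph{strictly smaller surviving} class-sums, so that the triangular structure on the $\mathsf{FH}_{m}$-side descends intact to $(\mathbb{Z}\mathfrak{S}_{n+m})^{\mathfrak{S}_{m}'}$; once the degree filtration controlling survival and the finer order $<$ controlling the $M_{\alpha}$-expansion are reconciled, the conclusion is immediate.
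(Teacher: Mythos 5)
Your argument is correct and is exactly the route the paper intends: the corollary is justified there only by the remark that one feeds the $R$-basis $\{K(\pi)Y_{1}^{\bm{d}(1)}\cdots Y_{m}^{\bm{d}(m)}M_{\alpha}\}$ of $\mathsf{FH}_{m}$ through the epimorphism $(-)^{\sigma_{n,m}}\circ\mathsf{pr}_{n+m,m}$, and your unitriangular comparison with the conjugated class-sum basis of $Z_{n+m,m}$ is the standard (and necessary, over $\mathbb{Z}$) way to make both spanning and independence precise. One remark: the survival condition you derive, $\mathsf{deg}_{m}(\pi u^{\bm{d}}\overline{\alpha})\leq n+m$ (i.e.\ at most $n$ symbols $*$), matches the cardinality of the class-sum basis of $Z_{n+m,m}$, so the indexing set in the statement should be read as $\Lambda_{\leq n+m}(m)$ rather than $\Lambda_{\leq n}(m)$ --- your condition is the correct one.
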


\begin{cor}
The subalgebra $\mathsf{Sym}[L_{1},\dots,L_{n} | L_{n+1},\dots,L_{n+m}]$ of polynomials symmetric in the first $n$ and last $m$ Jucys-Murphy elements is central in $(\mathbb{Z}\mathfrak{S}_{n+m})^{\mathfrak{S}_{m}'}$.
\end{cor}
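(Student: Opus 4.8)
The plan is to feed the description of the center of $\mathsf{FH}_{m}$ given in \Cref{Cor:CenterFHm} through the epimorphism $\Phi:=(-)^{\sigma_{n,m}}\circ\mathsf{pr}_{n+m,m}\colon\mathsf{FH}_{m}\rightarrow(\mathbb{Z}\mathfrak{S}_{n+m})^{\mathfrak{S}_{m}'}$, relying on the elementary fact that a surjective ring homomorphism carries central elements to central elements. Concretely, if $z\in Z(\mathsf{FH}_{m})$ and $a\in(\mathbb{Z}\mathfrak{S}_{n+m})^{\mathfrak{S}_{m}'}$ is arbitrary, then by surjectivity $a=\Phi(x)$ for some $x\in\mathsf{FH}_{m}$, whence
\[ \Phi(z)\,a=\Phi(zx)=\Phi(xz)=a\,\Phi(z). \]
Thus $\Phi\bigl(Z(\mathsf{FH}_{m})\bigr)$ is a central subalgebra of $(\mathbb{Z}\mathfrak{S}_{n+m})^{\mathfrak{S}_{m}'}$, and it suffices to identify this image with $\mathsf{Sym}[L_{1},\dots,L_{n}\mid L_{n+1},\dots,L_{n+m}]$.

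For the identification, I would first invoke \Cref{Cor:CenterFHm}, which gives that $Z(\mathsf{FH}_{m})=\mathsf{Sym}[Y_{1},\dots,Y_{m}]\otimes\mathsf{FH}_{0}$ is generated as a $\mathbb{Z}$-algebra by the elements $E_{k}$ together with the elementary symmetric polynomials $e_{k}(Y_{1},\dots,Y_{m})$ for $k\geq 0$. Since $\Phi$ is an algebra homomorphism with $\Phi(E_{k})=e_{k}(L_{1},\dots,L_{n})$ and $\Phi(Y_{i})=L_{n+i}$, I would compute
\[ \Phi\bigl(e_{k}(Y_{1},\dots,Y_{m})\bigr)=e_{k}\bigl(\Phi(Y_{1}),\dots,\Phi(Y_{m})\bigr)=e_{k}(L_{n+1},\dots,L_{n+m}). \]
Hence $\Phi\bigl(Z(\mathsf{FH}_{m})\bigr)$ is exactly the subalgebra of $(\mathbb{Z}\mathfrak{S}_{n+m})^{\mathfrak{S}_{m}'}$ generated by the two families $\{e_{k}(L_{1},\dots,L_{n})\}_{k\geq 0}$ and $\{e_{k}(L_{n+1},\dots,L_{n+m})\}_{k\geq 0}$.

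To conclude, I would match this generated subalgebra with the ring of block-symmetric polynomials. Because all Jucys-Murphy elements commute with one another (relation $(1)$ of \Cref{Lem:JMRelations}), the subalgebra they generate is commutative, so the fundamental theorem of symmetric functions applies blockwise: the polynomials symmetric in $L_{1},\dots,L_{n}$ are precisely those expressible in the $e_{k}(L_{1},\dots,L_{n})$, and likewise the polynomials symmetric in $L_{n+1},\dots,L_{n+m}$ are those expressible in the $e_{k}(L_{n+1},\dots,L_{n+m})$. Since the two blocks of variables commute, products of the two families span exactly $\mathsf{Sym}[L_{1},\dots,L_{n}\mid L_{n+1},\dots,L_{n+m}]$, giving the desired equality and hence centrality by the first paragraph. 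The only step requiring genuine care — and the closest thing to an obstacle — is verifying that the two families of elementary symmetric polynomials generate the full algebra of block-symmetric polynomials rather than a proper subalgebra; this is where the commutativity of the $L_{i}$ is essential, as it licenses the blockwise application of the fundamental theorem without any concern over the order of multiplication.
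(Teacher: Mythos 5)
Your proposal is correct and follows exactly the route the paper intends: push the description of $Z(\mathsf{FH}_{m})$ from \Cref{Cor:CenterFHm} through the epimorphism $(-)^{\sigma_{n,m}}\circ\mathsf{pr}_{n+m,m}$, use that a surjective ring homomorphism sends central elements to central elements, and identify the image of the generators $E_{k}$ and $e_{k}(Y_{1},\dots,Y_{m})$ with $e_{k}(L_{1},\dots,L_{n})$ and $e_{k}(L_{n+1},\dots,L_{n+m})$. The only remark worth adding is that the corollary asserts just a containment (the block-symmetric subalgebra is central), so the "only step requiring genuine care" you flag is actually the easy direction of the fundamental theorem of symmetric polynomials and needs no further justification.
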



\end{document}